  \theoremstyle{plain}
  \newtheorem{theorem}{Theorem}[section]
  \newtheorem{lemma}[theorem]{Lemma}
  \newtheorem{cor}[theorem]{Corollary}
  \newtheorem{conj}[theorem]{Conjecture}
  \theoremstyle{remark}
  \theoremstyle{definition}
  \newtheorem{Remark}[theorem]{Remark}
  \numberwithin{equation}{section}
  \numberwithin{theorem}{section}
\newtheorem{proposition}[theorem]{Proposition}
\newtheorem{definition}[theorem]{Definition}
\newcommand{\Z}{\mathbb{Z}}
\newcommand{\C}{\mathbb{C}}
\newcommand{\Oh}{\mathcal{O}}
\newcommand{\Hz}{\mathcal{H}}
\newcommand{\HH}{\overline{\mathcal{H}}}
\newcommand{\FF}{\mathbb{F}}
\newcommand{\mf}{\mathfrak}
\newcommand{\mc}{\mathcal}
\newcommand{\B}{\mathcal{B}}
\newcommand{\ra}{\rightarrow}
\newcommand{\xra}{\xrightarrow}
\DeclareMathOperator{\diag}{diag}
\DeclareMathOperator{\rank}{rank}
\DeclareMathOperator{\rec}{rec}
\DeclareMathOperator{\tr}{tr}
\DeclareMathOperator{\im}{im}
\DeclareMathOperator{\GL}{GL}
\DeclareMathOperator{\id}{id}
\DeclareMathOperator{\Hom}{Hom}
\DeclareMathOperator{\res}{res}
\author{Jon Cohen}
\begin{document}

\title{Transfer of representations and orbital integrals for inner forms of $\GL_n$}


 \begin{abstract}
 We characterize the Local Langlands Correspondence (LLC) for inner forms of $\GL_n$ via the Jacquet-Langlands Correspondence (JLC) and compatibility with the Langlands Classification. We show that LLC satisfies a natural compatibility with parabolic induction and characterize LLC for inner forms as a unique family of bijections $\Pi(\GL_r(D)) \ra \Phi(\GL_r(D))$ for each $r$, (for a fixed $D$) satisfying certain properties. We construct a surjective map of Bernstein centers $\mf{Z}(\GL_n(F))\to \mf{Z}(\GL_r(D))$ and show this produces pairs of matching distributions in the sense of \cite{SBC}. Finally, we construct explicit Iwahori-biinvariant matching functions for unit elements in the parahoric Hecke algebras of $\GL_r(D)$, and thereby produce many explicit pairs of matching functions. 
  \end{abstract}

\maketitle
\thispagestyle{empty}
\pagestyle{headings}
\markboth{Jon Cohen}{Transfer of Representations and Orbital Integrals for Inner Forms of $\GL_n$}

\section{Introduction} \label{section intro}

The classical Local Langlands Correspondence for $G^*=\GL_n$ over a local nonarchimedian field $F$ is characterized as the unique family of bijections between irreducible smooth complex representations $\Pi(G^*)$ of $\GL_n(F)$ and $n$-dimensional Frobenius-semisimple Weil-Deligne representations $\Phi(G^*)$, satisfying certain properties. Of these properties, the most crucial is the preservation of $L$- and $\varepsilon$-factors of pairs, originally constructed in \cite{jacquet1983rankin}. The absence of an intrinsic definition of such factors is an obstacle to generalizing the characterization to inner forms $\GL_r(D)$, where $D$ is an $F$-central division algebra. The theory of Whittaker models, necessary to define $L(\pi\times\pi', s)$, exists only for quasi-split groups, and so does not apply to these inner forms. It is also not clear what form such factors could take, since the functional equation for $L(\pi\times \pi', s)$ uses the existence of inverse-transpose as an involution transporting irreducible representations of $\GL_n(F)$ to their contragredients. With the single exception of $\GL_r(D)$ as an inner form of $\GL_{2r}(F)$, i.e., when $D$ is a quaternion algebra, no such involution exists for $\GL_r(D)$, as all automorphisms of $\GL_r(D)$ are inner; see \cite{raghuram2002representations}. 



Instead, we characterize the Local Langlands Correspondence for inner forms of $G^*$ by requiring it to be compatible with the Jacquet-Langlands Correspondence and Langlands quotients. We also consider the restrictions of the resulting Langlands parameters to the Weil group. Let $\Pi(H)$, respectively $\Pi^2(H)$, denote the set of isomorphism classes of irreducible smooth complex $H$-representations, respectively essentially square-integrable representations. Let $\Phi(H)$ denote the set of equivalence classes of Langlands parameters $W_F'\to {^L} H$. The result is summarized in the following theorem. 

\begin{theorem}
Fix $D$ a $d^2$-dimensional $F$-central division algebra over the nonarchimedian local field $F$, and let $G=\GL_r(D)$ be an inner form of $G^*=\GL_n(F)=\GL_{rd}(F)$. Then there is a a unique family of bijective maps $$\rec_r:\Pi(\GL_r(D)) \to \Phi(\GL_r(D))\subset \Phi(\GL_n(F))$$ for $r\geq 1$ such that 

1. $\rec_r|_{\Pi^2(\GL_r(D))} = \rec^*_{rd}\circ JL_r$ where $\rec_n^*: \Pi(\GL_n(F))\xrightarrow{\cong} \Phi(\GL_n(F))$ is the LLC for $\GL_n(F)$ and $JL_r: \Pi^2(G)\to \Pi^2(G^*)$ is the Jacquet-Langlands correspondence.  

2. If $\pi$ is the Langlands quotient of $\sigma_1\times \ldots \times \sigma_k$ then $\rec_r(\pi) = \rec_{r_1}(\sigma_1)\oplus \cdots \oplus \rec_{r_k}(\sigma_k)$. 

The image of $\rec_r$ consists of those parameters which decompose as direct sums of indecomposable Weil-Deligne representations $W'_F \to \GL_m(\C)$ where $d$ divides $m$. The family $\rec_r$ is compatible with twisting: if $\chi\circ Nrd_{\GL_r(D)}$ is an arbitrary character of $\GL_r(D)$, where $\chi:F^\times \to \C^\times$, then $\rec_r({\chi\cdot\pi})=\rec_r(\pi) \otimes (\chi\circ Art_F^{-1})$ where $Art_F: F^\times \to W_F^{ab}$ is the Artin Reciprocity isomorphism of Local Class Field Theory. 

If we postcompose $\rec_r$ with restriction to the Weil group $W_F$, then the resulting family is  characterized by compatibility with Jacquet-Langlands and parabolic induction in the sense of LLC+ \textup{(}see section 3 for the definition of LLC+\textup{)}. The image of $\res|_{W_F}\circ \rec_r$ is those representations $W_F\to \GL_{rd}(\C)$ which factor through a Levi subgroup of the form $\prod\limits_{i=1}^k \GL_{m_i \frac{ d}{  q_i}}(\C)^{q_i}$ for some coprime integers $m_i$ and $q_i$ such that $\sum\limits_{i=1}^k m_i = r$. 
\end{theorem}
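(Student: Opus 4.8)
The plan is to push everything through the Langlands classification for $\GL_r(D)$, using the Jacquet--Langlands correspondence as a black box wherever possible and invoking its fine structure on supercuspidals only for the two image statements. I begin by recalling that every $\pi\in\Pi(\GL_r(D))$ is the Langlands quotient of a standard module $\sigma_1\times\cdots\times\sigma_k$ with $\sigma_i\in\Pi^2(\GL_{r_i}(D))$ and $\sum_i r_i=r$, and that the multiset $\{\sigma_1,\dots,\sigma_k\}$ is uniquely determined by $\pi$ (Tadi\'c, building on Zelevinsky; see also Badulescu). Property 1 forces $\rec_r$ to coincide with $\rec^*_{rd}\circ JL_r$ on $\Pi^2(\GL_r(D))$, and property 2 then forces $\rec_r(\pi)=\rec_{r_1}(\sigma_1)\oplus\cdots\oplus\rec_{r_k}(\sigma_k)$. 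Conversely this formula is well defined, because $\oplus$ does not see the ordering of the $\sigma_i$; it satisfies property 1 on the nose, and property 2 in general, since for arbitrary irreducible $\sigma_i$ one expands each as a Langlands quotient of essentially square-integrable pieces and uses associativity of $\oplus$. This settles existence and uniqueness at once.

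For bijectivity, $\rec^*$ restricts to a bijection from $\Pi^2(\GL_{rd}(F))$ onto the indecomposable $rd$-dimensional Weil--Deligne representations, and $JL_r\colon\Pi^2(\GL_r(D))\to\Pi^2(\GL_{rd}(F))$ is a bijection (Deligne--Kazhdan--Vign\'eras, Badulescu), so $\rec_r$ restricted to $\Pi^2(\GL_r(D))$ is a bijection onto the indecomposable $rd$-dimensional Weil--Deligne representations. Since the indecomposable summands of $\rec_r(\pi)$ are exactly the $\rec_{r_i}(\sigma_i)$ by Krull--Schmidt, the multiset $\{\sigma_i\}$, hence $\pi$, can be recovered from $\rec_r(\pi)$, giving injectivity; and given a direct sum $\bigoplus_i\phi_i$ of indecomposable Weil--Deligne representations with $d\mid\dim\phi_i$ and $\sum_i\dim\phi_i=rd$, I set $r_i=(\dim\phi_i)/d$, let $\sigma_i=(\rec^*_{r_id}\circ JL_{r_i})^{-1}(\phi_i)$, and take $\pi$ to be the Langlands quotient of the $\sigma_i$ in decreasing order, so that $\rec_r(\pi)=\bigoplus_i\phi_i$. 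As each $\rec_{r_i}(\sigma_i)$ is indecomposable of dimension $r_id$, this pins down the image exactly. Compatibility with twisting is then formal: $\chi\cdot\pi$ has Langlands data $\{\sigma_i\otimes(\chi\circ Nrd)\}$, the correspondence $JL$ intertwines twisting by $\chi\circ Nrd$ with twisting by $\chi\circ\det$, the LLC for $\GL_{rd}(F)$ intertwines the latter with $\otimes(\chi\circ Art_F^{-1})$, and tensoring by $\chi\circ Art_F^{-1}$ commutes with $\oplus$.

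For the final assertion, post-composing with $\res|_{W_F}$ kills the monodromy operator, so the resulting assignment factors through the supercuspidal support: if $\pi$ has supercuspidal support the multiset $\{(\rho_j,\GL_{b_j}(D))\}_j$ with $\sum_j b_j=r$, then $\res_{W_F}\rec_r(\pi)=\bigoplus_j\res_{W_F}\rec^*_{b_jd}(JL_{b_j}(\rho_j))$ — here one uses that a $\GL(D)$-segment links $\rho$ to $\nu_D^{s(\rho)}\rho$, not to $\nu_D\rho$, which is exactly what makes this expression additive under parabolic induction. Being additive and agreeing with $\res_{W_F}\circ\rec^*\circ JL$ on $\Pi^2$, this map satisfies the properties defining LLC+ (compatibility with Jacquet--Langlands and with parabolic induction), and uniqueness follows from the analogue of the argument above, with supercuspidal support in place of Langlands data. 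To identify the image, write $JL_b(\rho)=\mathrm{St}_{s(\rho)}(\rho^{JL})$, the generalized Steinberg attached to a supercuspidal $\rho^{JL}$ of $\GL_{bd/s(\rho)}(F)$; the structure of the Jacquet--Langlands map on supercuspidals gives $s(\rho)\mid d$ and $\gcd(b,s(\rho))=1$, with every admissible pair realized. Then $\res_{W_F}\rec^*(JL_b(\rho))$ is a string of $s(\rho)$ consecutive unramified twists of the irreducible $\rec^*(\rho^{JL})$, which has dimension $bd/s(\rho)$, so running over all supercuspidal supports realizes precisely the $W_F$-representations that factor through a Levi of the form $\prod_i\GL_{m_id/q_i}(\C)^{q_i}$ with $m_i,q_i$ coprime and $\sum_i m_i=r$ (the $m_i$ being the sizes of the supercuspidal blocks and the $q_i$ the corresponding invariants $s(\rho)$), and conversely every such representation so arises.

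The routine parts are the Langlands-classification bookkeeping and the twisting compatibility; the real content sits in the Jacquet--Langlands input. I expect the two delicate points to be the precise structure of $JL$ on supercuspidals — notably the coprimality $\gcd(b,s(\rho))=1$ and the exhaustion of admissible pairs $(s(\rho),\rho^{JL})$, which drive the image computation for $\res|_{W_F}\circ\rec_r$ — together with the verification that $\res|_{W_F}\circ\rec_r$ is genuinely pinned down by the LLC+ axioms rather than merely consistent with them, which relies on the supercuspidal-support additivity noted above.
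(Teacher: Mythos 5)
Your proposal is correct and follows essentially the same route as the paper's (much terser) proof: define $\rec_r$ on Langlands data via $\rec^*\circ JL$ on the essentially square-integrable pieces, deduce uniqueness and bijectivity from the Langlands classification together with the indecomposability of parameters of square-integrables, and handle the $W_F$-restriction statement through the Tadi\'c--Badulescu description of supercuspidal supports under $JL$ (the paper's Corollary \ref{scsupport}; your $s(\rho)$ and the coprimality condition). The only point worth flagging is that your explicit description of the image of $\res|_{W_F}\circ\rec_r$ --- direct sums of strings of consecutive unramified twists of irreducibles --- is strictly sharper than the ``factors through a Levi of the indicated shape'' condition you then equate it with; this imprecision is inherited from the statement itself rather than introduced by your argument.
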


 This is Theorem \ref{characterizationtheorem} below. We mention that the LLC for inner forms of GL(n) has a folklore status and is considered well-known; see \cite{aubert2012geometric}. However, the
characterization given above does not seem to appear
elsewhere, and is useful for the other applications in this paper.

With this in hand we then prove a result relating the Bernstein centers $\mf{Z}(G)$ and $\mf{Z}(G^*)$ of $G$ and $G^*$. See section \ref{section morphismT} below for a recollection of the relevant basics of the theory of the Bernstein Center.

\begin{theorem} \label{theorem2}
There is a natural surjective homomorphism \begin{equation*}
\mf{T}:\mf{Z}(G^*)\to \mf{Z}(G)
\end{equation*} between the Bernstein center of $G^*$ and that of its inner form $G=\GL_r(D)$. Furthermore, if $f\in \Hz(G)$ and $f^*\in \Hz(G^*)$ have matching orbital integrals \textup{(}see section \ref{sectionmatching} for the relevant definitions\textup{)}, then so do $\mf{T}(Z^*) * f$ and $Z^**f^*$ for all $Z^*\in \mf{Z}(G^*)$.  
\end{theorem}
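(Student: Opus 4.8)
The plan is to construct $\mf{T}$ via the transfer of representations $\rec_r$ established in the first theorem, and then to reduce the orbital-integral matching statement to a comparison of trace characters, where the compatibility of $\rec_r$ with parabolic induction does the essential work.

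First I would recall the description of the Bernstein center as the ring of functions on the Bernstein variety, or equivalently as the algebra of endomorphisms of the identity functor on smooth representations; concretely, an element $Z^* \in \mf{Z}(G^*)$ is a collection of scalars $Z^*(\pi^*)$ acting on each $\pi^* \in \Pi(G^*)$, varying regularly in families. To define $\mf{T}(Z^*) \in \mf{Z}(G)$, I would declare it to act on $\pi \in \Pi(G)$ by the scalar $Z^*$ attached to the representation of $G^*$ whose Langlands parameter is $\rec_r(\pi)$ — that is, pull back along the map on parameter spaces induced by $\rec_r$. Here I must check: (a) this is well-defined, i.e. the pullback of a regular function on the Bernstein variety of $G^*$ is a regular function on that of $G$; (b) it is a ring homomorphism (immediate, since it is a pullback of scalars); and (c) it is surjective. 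For (a), the key point is that $\rec_r$ is compatible with parabolic induction (property 2 of the first theorem) and with unramified twists, so it sends a connected component (= inertial class, cut out by a cuspidal support up to unramified twist) of the Bernstein variety of $G$ into a single component of that of $G^*$, algebraically; surjectivity (c) follows because $\rec_r$ is a bijection onto its image and that image, being a union of components, means the pullback map on each target component lands onto a whole source component, so one can split it.

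Next, for the matching statement, recall that $f$ and $f^*$ have matching orbital integrals iff they have "matching" (transfer-compatible) values on regular semisimple conjugacy classes, and by the density/Weyl integration philosophy this is equivalent to a character identity: $\tr \pi^*(f^*) = \tr \pi(f)$ (up to the appropriate sign $(-1)^{?}$ coming from the inner form) whenever $\pi = JL$-related to $\pi^*$, extended from the discrete series to all representations via parabolic induction using van Dijk's formula and the Langlands classification. Given this reformulation, the claim that $\mf{T}(Z^*)*f$ matches $Z^**f^*$ becomes: $\tr \pi^*(Z^**f^*) = Z^*(\pi^*)\cdot \tr\pi^*(f^*)$ and $\tr\pi(\mf{T}(Z^*)*f) = \mf{T}(Z^*)(\pi)\cdot \tr\pi(f) = Z^*(\pi^*)\cdot\tr\pi(f)$, where $\pi^* \leftrightarrow \pi$ under the correspondence matching $f^*,f$; these agree precisely because $\mf{T}(Z^*)(\pi) = Z^*(\pi^*)$ by construction (the parameter of $\pi$ restricted appropriately is that of $\pi^*$), and because a Bernstein center element acts on $\pi(f)$ by the scalar $Z^*(\pi^*)$ times $\tr\pi(f)$.

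The main obstacle I expect is bookkeeping the precise dictionary between "matching orbital integrals" and the character identity — i.e. pinning down exactly which representations are paired (only those in the image of $JL$, with the correct extension to non-tempered parameters via Langlands quotients) and the sign conventions, so that the Bernstein-center scalars genuinely match up on both sides; this requires invoking the transfer results of \cite{SBC} in the correct form and checking that $\mf{T}$ as defined is the one compatible with that transfer. A secondary technical point is verifying that $\mf{T}(Z^*)$ is actually a regular function on the whole Bernstein variety of $G$ and not merely a "constructible" family of scalars — this is where compatibility of $\rec_r$ with unramified twisting (the twisting property in the first theorem) is used, to ensure the pullback is algebraic along each component. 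Once these are in place, surjectivity and the homomorphism property are formal, and the matching of $\mf{T}(Z^*)*f$ with $Z^**f^*$ follows from the character reformulation as sketched.
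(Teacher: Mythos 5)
Your construction of $\mf{T}$ and your strategy for the matching statement both track the paper's argument: the paper defines $\mf{T}$ on the Bernstein variety by sending $(M,\sigma)_G$ to the supercuspidal support of $JL(\sigma)$, which (via LLC+ for both groups) is the same map you obtain by pulling back along $\rec_r$, and the matching statement rests on exactly the trace computation you sketch. But your surjectivity argument is a genuine gap, not bookkeeping. The claim that the image of $\rec_r$ "is a union of components, so the pullback lands onto a whole source component" fails at the level of Bernstein varieties, which is where $\mf{Z}(G)$ lives: $\mf{T}$ sends a component $\mf{X}_{\mf{s}}$ of $\mf{X}_G$ \emph{into}, not onto, a component $\mf{X}_{\mf{s}^*}$ of $\mf{X}_{G^*}$. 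If $\sigma$ is a supercuspidal of $\GL_m(D)^k$ whose Jacquet--Langlands transfer has supercuspidal support on $\GL_{md/\ell}(F)^{k\ell}$ with $\ell>1$, then $\mf{X}_{\mf{s}}$ has dimension $k$ while $\mf{X}_{\mf{s}^*}$ has dimension $k\ell$. Surjectivity of $\C[\mf{X}_{\mf{s}^*}]\to\C[\mf{X}_{\mf{s}}]$ is therefore equivalent to $\mf{T}$ being a closed immersion, and this is where the real work lies: one must compare the stabilizers $stab_\sigma$ and $stab_{\rho_\sigma}$ in the tori of unramified characters (showing they are $\mu_{m/s}^{k}$ and $\mu_{m/s}^{k\ell}$ for the \emph{same} $s$), identify the actions of $W[\mf{s}]\cong S_k$ and $W[\mf{s}^*]\cong S_{k\ell}$, and verify surjectivity on invariants explicitly (the paper does this with power sum symmetric functions). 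None of this is formal.

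On the matching statement, your central identity $\tr(\mf{T}(Z^*)*f:\pi)=Z^*(JL(\pi))\,\tr(f:\pi)$ for tempered $\pi$ is correct and is the heart of the paper's proof, but the logic cannot be closed by declaring orbital-integral matching "equivalent" to a character identity. The implication \emph{associated $\Rightarrow$ trace identity on tempered representations} is the Weyl integration argument; the converse, which is what you need, requires first producing \emph{some} $\varphi^*\in\Hz(G^*)$ associated to $\mf{T}(Z^*)*f$ (the existence theorem of Deligne--Kazhdan--Vigneras, or Badulescu in positive characteristic), then showing $\varphi^*$ and $Z^**f^*$ have equal traces on \emph{all} tempered irreducibles of $G^*$ --- both those of the form $JL(\pi)$ and those not in the image of $JL$, where one needs the separate vanishing statement $\tr(f^*:\pi^*)=0$ --- and only then applying Kazhdan's density theorem on $G^*$ to conclude the regular semisimple orbital integrals of $\varphi^*$ and $Z^**f^*$ agree (with the reduction from semisimple to regular semisimple classes supplied by Kottwitz). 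A trace identity between representations of $G$ and of $G^*$ alone gives no handle on the orbital integrals of $Z^**f^*$ at conjugacy classes of $G^*$ not coming from $G$, so the existence-plus-density scaffolding is not optional.
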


This is shown in Proposition \ref{5.1Prop}, Theorem \ref{5.1.Thm}, and Theorem \ref{MatchDist}. We show that the restriction of the Langlands parameter $\varphi_\pi= \rec_r(\pi)$ of an irreducible representation $\pi\in \Pi(G)$ to $W_F$ is determined by the supercuspidal support of $\pi$, and use this to prove a transference of geometric Bernstein centers between $G^*$ and its inner forms, as we now explain. Let $I_F$ denote the inertia subgroup of the Weil group, and let $\Phi$ denote a geometric Frobenius. The geometric Bernstein centers are defined to be the subalgebras of the Bernstein centers generated by those regular functions on the Bernstein variety of the form 
\begin{equation*}
Z_V:(M, \sigma)_G\mapsto \tr(\varphi_\pi(\Phi): V^{I_F})
\end{equation*} 
where $V$ varies over finite-dimensional algebraic representations of ${^L }G$, and $(M, \sigma)_G$ is the supercuspidal support of $\pi$. The fact that these $Z_V$ are well-defined elements of the Bernstein center is a consequence of LLC+, as we describe in section \ref{sectionapplication}. We write $Z_V^*$ for the analogous distribution on $G^*$. As an immediate consequence of the definition of $\mf{T}$ and LLC+ we obtain the following 

\begin{cor}\label{introcor}

For a representation $V$ of $^L G$ as above, we have $\mf{T}(Z_V^*) = Z_V$. 

\end{cor}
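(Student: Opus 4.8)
The corollary follows by unwinding the definition of the geometric Bernstein center elements $Z_V, Z_V^*$ together with the explicit description of $\mf{T}$ and the compatibility of $\rec_r$ with supercuspidal support. First I would recall that $Z_V^*$ is the element of $\mf{Z}(G^*)$ whose value at a supercuspidal support $(M^*, \sigma^*)_{G^*}$ is $\tr(\varphi_{\pi^*}(\Phi) : V^{I_F})$, where $\pi^*$ is any irreducible with that supercuspidal support and $\varphi_{\pi^*} = \rec^*_n(\pi^*)$; the point (established via LLC+) is that $\varphi_{\pi^*}|_{W_F}$, hence also $\tr(\varphi_{\pi^*}(\Phi):V^{I_F})$, depends only on the supercuspidal support and not on the choice of $\pi^*$ in the block. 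The same holds on $G$ with $\rec_r$ in place of $\rec^*_n$.

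**Key steps.** The morphism $\mf{T}:\mf{Z}(G^*)\to \mf{Z}(G)$ is constructed (in section \ref{section morphismT}) as the map induced on regular functions by a morphism of Bernstein varieties $\Omega(G)\to \Omega(G^*)$ sending the supercuspidal support $(M,\sigma)_G$ of $\pi\in\Pi(G)$ to the supercuspidal support of its image under the relevant functor — concretely, the one matching $\pi$ and $\pi^* \in \Pi(G^*)$ that have the same restricted Langlands parameter $\varphi_\pi|_{W_F} = \varphi_{\pi^*}|_{W_F}$. Granting that this is how $\mf{T}$ is defined, the identity $\mf{T}(Z_V^*) = Z_V$ is a direct computation: for $\pi\in\Pi(G)$ with supercuspidal support $(M,\sigma)_G$,
\[
(\mf{T}(Z_V^*))\big((M,\sigma)_G\big) = Z_V^*\big((M^*,\sigma^*)_{G^*}\big) = \tr(\varphi_{\pi^*}(\Phi):V^{I_F}) = \tr(\varphi_\pi(\Phi):V^{I_F}) = Z_V\big((M,\sigma)_G\big),
\]
where the middle equality is the defining property of $\mf{T}$ on points, and the third equality uses $\varphi_\pi|_{W_F} = \varphi_{\pi^*}|_{W_F}$ (so in particular they agree on $\Phi$ acting on $V^{I_F}$, since $I_F\subset W_F$ and $\Phi\in W_F$). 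Since the two regular functions agree at every point of the Bernstein variety $\Omega(G)$, they are equal as elements of $\mf{Z}(G)$.

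**Main obstacle.** The only real content — and the step I would be most careful about — is verifying that $\mf{T}$ really is compatible with restricted Langlands parameters on the nose, i.e. that the morphism of Bernstein varieties underlying $\mf{T}$ is precisely the one that preserves $\varphi_\bullet|_{W_F}$. This is not formal: it requires knowing that $\mf{T}$, however it was built in section \ref{section morphismT} (presumably via the surjection of Bernstein centers coming from the transfer of Hecke algebras / types), induces on supercuspidal supports the map characterized by "same restricted parameter," which in turn rests on the part of Theorem \ref{characterizationtheorem} and LLC+ saying that $\res|_{W_F}\circ\rec_r$ is determined by supercuspidal support and is compatible with parabolic induction. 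Once that identification of $\mf{T}$ on points is in hand — which I would quote from the construction in section \ref{section morphismT} and the LLC+ properties — the corollary is immediate, which is exactly why the excerpt calls it "an immediate consequence." I would therefore keep the proof to a few lines: state the defining property of $\mf{T}$ on points, state that $\varphi_\pi|_{W_F}$ is a supercuspidal-support invariant matched by $\mf{T}$, and conclude equality of the two functions $Z_V$ and $\mf{T}(Z_V^*)$ pointwise on $\Omega(G)$.
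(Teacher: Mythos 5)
Your proposal is correct and follows essentially the same route as the paper: both unwind $\mf{T}$ on points as $(M,\sigma)_G\mapsto (M_\sigma^*,\rho_\sigma)_{G^*}$ (the supercuspidal support of $JL(\sigma)$) and then compare traces of Frobenius on $V^{I_F}$ using the fact that restricted parameters are supercuspidal-support invariants. The ``main obstacle'' you flag is discharged in the paper exactly as you anticipate, by the three-step chain $\varphi_{\rho_\sigma}|_{W_F}=\varphi_{JL(\sigma)}|_{W_F}$ (LLC+ for $\GL_n(F)$), $\varphi_\sigma=\varphi_{JL(\sigma)}$ (definition of $\rec_r$ on square-integrables), and well-definedness of $Z_V$ on supercuspidal supports (LLC+ for $\GL_r(D)$).
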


This result, listed below as Corollary \ref{Z_V}, had been previously proved in the special case of $n=2$ for the Iwahori block in \cite{SBC}. There is a concrete combinatorial interpretation of this transference, at least on the Iwahori block, in terms of an identity involving Schur polynomials; see section \ref{sectioncombasides}. 


In order to apply Theorem \ref{theorem2}, we need to be given a pair of matching functions. It is known that for every $f\in \Hz(\GL_r(D))$ there is a function $f^*\in \Hz(\GL_n(F))$ which matches $f$. However, this $f^*$ is not given explicitly. Indeed, only its image in the cocenter is canonical. However, having an explicit matching function, particularly one with specified biinvariance properties, can be useful for many purposes. For example, the matching of Kottwitz's Euler-Poincare functions is used in his proof of the Tamagawa Number Conjecture, \cite{Kott}. We briefly recall the definition of these functions. Let $H$ be a reductive, adjoint, $F$-group, and let $S$ denote some choice of representatives for the (finitely-many) $H$-orbits of facets in the Bruhat-Tits building of $H$. For $\sigma\in S$, let $H_\sigma\subset H$ denote the stabilizer, and let $sgn_\sigma: H_\sigma\to\{ \pm1\} $ denote the sign of the permutation action of $H_\sigma$ on the vertices of $\sigma$. Fix a Haar measure $dh$ on $H$. Then Kottwitz's Euler-Poincare function on $H$ is defined to be $$\sum\limits_{\sigma\in S} \frac{(-1)^{\dim\sigma} sgn_\sigma}{dh(H_\sigma)}.$$

Returning to our context of $G=\GL_r(D)$, let $J$ denote any parahoric subgroup of $G$. For each Levi subgroup $M^*\subset G^*$, we construct explicit functions $f_{S_{M^*}}$ in the Iwahori-Hecke algebra of $\GL_n(F)$, which we term ``relative Euler-Poincare functions.'' Explicitly, these have the form $$f_{S_{M^*}} := \left|\frac{M^*}{M^{*,1} Z(M^*)}\right| \sum\limits_{\sigma\in S_{M^*}}\frac{(-1)^{\dim\sigma} }{ vol(M^*_\sigma/ Z(M^*), dm/dz )}  \frac{vol(M^*_\sigma \cap M^{*,1}, dm)  }{vol(J_\sigma, dg)} \, 1_{J_\sigma}.$$  See section \ref{sectionmatchingfor1J} for complete definitions. In Theorem \ref{defofF_J} we define a certain linear combination $F_J$ of these $f_{S_{M^*}}$, depending on $J$. 
In the case that $G=\GL_1(D)$, so that $J=\Oh_D^\times$ is the Iwahori subgroup, then $F_J$ is Kottwitz's Euler-Poincare function for $PGL_n(F)$, lifted to $\GL_n(F)$ and restricted to $\GL_n(F)^1$.   Subject to some restrictions on $F$, we prove the following result. 

\begin{theorem}
Let $F$ have characteristic zero and residual characteristic $p$. Assume that $p>n$. Then $F_J$ has matching orbital integrals to $1_J$, the unit element of the parahoric Hecke algebra $\Hz(G, J)$ of $\GL_r(D)$.
\end{theorem}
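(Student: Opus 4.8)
The plan is to reduce the statement about $\GL_r(D)$ to a known matching of Euler-Poincaré functions on $\GL_n(F)$ via the theory of orbital integral transfer. The starting point is the characterization of matching functions through the Weyl integration formula: two functions $f \in \Hz(G)$ and $f^* \in \Hz(G^*)$ match if and only if for every regular semisimple $\gamma \in G$ with matching conjugacy class $\gamma^* \in G^*$, the orbital integrals $O_\gamma(f)$ and $O_{\gamma^*}(f^*)$ agree (with the appropriate transfer factors, which are trivial here since $\GL_n$ and its inner forms have the same absolute Weyl group and the relevant factors reduce to volume normalizations), and $O_{\gamma^*}(f^*) = 0$ whenever $\gamma^*$ does not transfer to $G$. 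So I would first record, following the construction in Section \ref{sectionmatchingfor1J}, that the functions $f_{S_{M^*}}$ are built precisely so that their orbital integrals compute (up to the explicit volume normalizations displayed above) the Euler-Poincaré-type alternating sums of fixed-point contributions on the building of the Levi $M^*$, and that $1_J$ has orbital integral at $\gamma$ equal to the volume of the fixed-point set of $\gamma$ on the single facet stabilized by $J$.

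The key computational input is Kottwitz's theorem (the fixed-point formula underlying \cite{Kott}) expressing the orbital integral of an Euler-Poincaré function at a regular semisimple element in terms of the Euler characteristic of the fixed points of $\gamma$ acting on the Bruhat-Tits building, which for $\GL$ reduces to the reduced building of the centralizer torus. The crucial point is that for $\gamma \in G = \GL_r(D)$ regular semisimple with matching $\gamma^* \in G^* = \GL_n(F)$, the centralizer $G_\gamma$ is an inner form of $G^*_{\gamma^*}$ — in fact both are (restrictions of scalars of) tori of the same isomorphism type — so the two buildings on which $\gamma$ and $\gamma^*$ act are canonically identified, and the alternating sums of fixed-facet volumes agree once the Haar measures are matched. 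Thus the first step is: for each parahoric $J$, define $F_J$ (this is Theorem \ref{defofF_J}) as the linear combination of the $f_{S_{M^*}}$ whose orbital integral at $\gamma^*$ equals, via Kottwitz's fixed-point count on the building of the torus $G^*_{\gamma^*}$, the volume of the $\gamma$-fixed set of the facet of $J$ — matching $O_\gamma(1_J)$ exactly when $\gamma^*$ transfers, and vanishing otherwise because the relevant building is empty (i.e.\ $\gamma^*$ does not meet a compact-mod-center torus compatible with $D$).

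Concretely I would proceed in the following steps. First, reduce to regular semisimple orbital integrals and recall the comparison of Haar measures on $G_\gamma \cong G^*_{\gamma^*}$. Second, apply the Euler-Poincaré fixed-point formula to each $1_{J_\sigma}$ appearing in $f_{S_{M^*}}$, rewriting $O_{\gamma^*}(f_{S_{M^*}})$ as a sum over facets in the building of $M^*$ fixed by $\gamma^*$, weighted by signs $(-1)^{\dim\sigma}$ and the displayed volume factors; this is where the precise normalization constant $|M^*/M^{*,1}Z(M^*)|$ and the ratio $vol(M^*_\sigma \cap M^{*,1})/vol(J_\sigma)$ are needed to make the contributions add up to a genuine Euler characteristic. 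Third, sum over the Levi subgroups $M^*$ with multiplicities coming from the definition of $F_J$, and use the combinatorial structure of the building of $\GL_n(F)$ — its decomposition according to the "shape" of a facet, which corresponds to a Levi — to collapse the double sum into the single Euler characteristic $\chi(\text{building of } G^*_{\gamma^*}, \text{building of } J)$. Fourth, identify this with $O_\gamma(1_J)$ on the $G$-side using the same fixed-point formula run in reverse, now on the building of $G = \GL_r(D)$ with the parahoric $J$, together with the canonical identification of the two torus buildings. Fifth, handle the vanishing: if $\gamma^*$ is not a norm from $G$, then $G^*_{\gamma^*}$ contains no torus that splits over a field embeddable in $M_r(D)$, so the fixed-point set is empty and the alternating sum is zero; I would also verify the measure-zero set of non-regular elements causes no trouble by the usual density argument.

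The hypotheses $\mathrm{char}(F) = 0$ and $p > n$ are exactly what is needed to invoke Kottwitz's fixed-point formula in the form we use: the bound $p > n$ guarantees that centralizers of regular semisimple elements behave well (no wild ramification obstructions in identifying the torus buildings, and the relevant groups are "tame"), and characteristic zero sidesteps issues with the Euler-Poincaré measure and with Shalika germs. The main obstacle I anticipate is the bookkeeping in step three: matching the combinatorial data of facets-up-to-$G^*$-conjugacy in the building of $\GL_n(F)$ with the Levi subgroups $M^*$ and tracking all the volume normalizations so that the alternating sum over $M^*$ genuinely telescopes to the Euler characteristic for $J$ rather than merely agreeing up to an unidentified constant — in other words, proving that the particular linear combination $F_J$ (and not some other combination of the $f_{S_{M^*}}$) is the right one. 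The identification of the torus buildings and the vanishing statement are comparatively routine given $p > n$; the normalization/telescoping is the heart of the matter.
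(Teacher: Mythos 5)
There is a genuine gap, and it sits exactly where you yourself flag the ``heart of the matter.'' Your plan is to compute $O_{\gamma^*}(F_J)$ as an Euler characteristic of the $\gamma^*$-fixed points on the building via Kottwitz's fixed-point formula, and then to ``identify this with $O_\gamma(1_J)$ on the $G$-side using the same fixed-point formula run in reverse.'' But Kottwitz's formula applies to the full alternating sum over \emph{all} facet types; the orbital integral of the characteristic function of a \emph{single} parahoric $J\subset\GL_r(D)$ is a weighted count of $\gamma$-fixed facets of one fixed type, not an Euler characteristic, and there is no ``reverse'' fixed-point formula that converts it into one. (The only case where this is harmless is $r=1$, where $\Oh_D^\times$ is normal and $O_\gamma(1_{\Oh_D^\times})$ is computed trivially; that is Proposition \ref{EPmatch}.) Establishing that the alternating-sum quantity on the $G^*$-side equals the single-type fixed-facet count on the $G$-side is precisely the hard content of the theorem, and your outline supplies no mechanism for it. The paper's actual route is entirely different: it writes $1_J=\frac{1}{|W_L|}\sum_{w\in W_L}R_{T_w}^{L}(1)$ as an average of Deligne--Lusztig functions inflated from the reductive quotient $L=J/J^+$, invokes the Kazhdan--Varshavsky theorem (\cite[Theorem 2.2.6]{KV}) to match each $R_{T_w}^{L}(1)$ with the corresponding Deligne--Lusztig function on $\GL_n(\FF_q)$, and only then uses the finite-group combinatorics of Propositions \ref{Comb prop} and \ref{gen prop} to rewrite each of those as the Iwahori-biinvariant combination $f_{S_{M_w^*}}$. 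The Deligne--Lusztig step is the irreplaceable external input, and it is absent from your proposal.

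Relatedly, you misattribute the hypotheses: $\mathrm{char}(F)=0$ and $p>n$ are not needed for Kottwitz's Euler--Poincar\'e machinery (the paper proves Propositions \ref{EPandKV} and \ref{EPmatch} with \emph{no} restriction on $F$); they are the hypotheses of the Kazhdan--Varshavsky matching of Deligne--Lusztig functions across inner forms. The fact that your argument has no step that would require these hypotheses is itself a signal that the essential input is missing. The parts of your outline concerning the vanishing of $O_{\gamma^*}(F_J)$ at classes not transferring to $G$, and the normalization of measures on the centralizers, are consistent in spirit with what the $f_{S_{M^*}}$ are designed to do, but they do not substitute for the matching statement itself.
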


By applying Theorem \ref{theorem2}, we obtain a large family of matching functions. In particular, with Corollary \ref{introcor}, we have the following immediate consequence.  

\begin{cor}
Under the same assumptions on $F$, the functions $Z_V * 1_J$ and $Z_V^* * F_J$ have matching orbital integrals. 
\end{cor}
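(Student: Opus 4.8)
The plan is purely to chain together the three pieces of machinery the paper has assembled: the transfer homomorphism $\mf{T}$, its compatibility with the distributions $Z_V$, and the explicit matching of $F_J$ with $1_J$. No new analysis is needed; the work has all been done in the earlier sections, and the corollary is the payoff.

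First I would invoke the preceding theorem, which (using $\mathrm{char}(F)=0$ and $p>n$) asserts that $F_J\in\Hz(\GL_n(F))$ and $1_J\in\Hz(\GL_r(D))$ have matching orbital integrals. Thus $(1_J, F_J)$ is a pair of matching functions in the sense of section \ref{sectionmatching}, and in particular it is a valid input to the transfer-of-distributions statement. I would next apply Theorem \ref{theorem2} (concretely, the matching-distributions part, Theorem \ref{MatchDist}) with $f=1_J$, $f^*=F_J$, and the central element $Z^*=Z_V^*\in\mf{Z}(G^*)$. Before doing so I should record the (already-established) fact that $Z_V^*$ genuinely lies in $\mf{Z}(G^*)$: this is exactly the point made in section \ref{sectionapplication}, where $Z_V^*$ is shown to be a well-defined element of the (geometric part of the) Bernstein center as a consequence of LLC+ for $\GL_n(F)$. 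Granting that, Theorem \ref{theorem2} yields directly that $\mf{T}(Z_V^*)*1_J$ and $Z_V^**F_J$ have matching orbital integrals.

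Finally I would substitute $\mf{T}(Z_V^*)=Z_V$, which is precisely Corollary \ref{introcor} (= Corollary \ref{Z_V}), to conclude that $Z_V*1_J$ and $Z_V^**F_J$ have matching orbital integrals, as claimed.

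There is no serious obstacle here beyond bookkeeping: the statement is a formal corollary of results proved earlier, so the "hard part" is really just making sure all the hypotheses line up. The one point that deserves a sentence of care is the meaning of "the same $V$" on the two sides: $Z_V$ is built from an algebraic representation $V$ of ${}^L G$ and $Z_V^*$ from one of ${}^L G^*$, and these are identified via the canonical isomorphism of $L$-groups (both being $\GL_{rd}(\C)\rtimes W_F$, since $G$ is an inner form of $G^*$). That this identification is the correct one — i.e.\ that it is compatible with $\mf{T}$ — is exactly the content of Corollary \ref{introcor}, so once that corollary is in hand the present statement follows with no further input.
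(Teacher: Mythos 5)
Your proposal is correct and is exactly the paper's argument: the paper deduces this corollary by combining Theorem \ref{defofF_J} (which supplies the matching pair $(1_J, F_J)$) with Theorem \ref{MatchDist} applied to $Z^*=Z_V^*$ and the identity $\mf{T}(Z_V^*)=Z_V$ of Corollary \ref{Z_V} — precisely the chain you describe (and which the paper also packages as Corollary \ref{matchdistcor}). Your extra remark about identifying $V$ on the two sides via ${}^LG={}^LG^*$ is a sensible point of bookkeeping that the paper leaves implicit.
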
 

The construction of $f_{S_{M^*}}$ makes use only of Bruhat-Tits theory, and can be generalized to arbitrary reductive $p$-adic groups. See Definition \ref{gendef}. These functions may be of interest in greater generality than we pursue here. 


For future work, we will extend these results to twisted orbital integrals, as these will then have applications to Shimura varieties. 

We also prove a result relating the cocenters of $G$ and $G^*$, whose proof makes use of the homomorphism $\mf{T}$ of Bernstein centers. See section \ref{section cocenter}. 

\begin{theorem}
There is a natural surjective homomorphism \begin{equation*}
\overline{\mf{T}}:\HH(G^*)\to \HH(G). 
\end{equation*} 
\end{theorem}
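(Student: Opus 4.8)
The plan is to deduce the existence of $\overline{\mf{T}}:\HH(G^*)\to\HH(G)$ from the already-constructed homomorphism $\mf{T}:\mf{Z}(G^*)\to\mf{Z}(G)$ of Bernstein centers, together with the matching of distributions from Theorem \ref{MatchDist}. Recall that the cocenter $\HH(H)$ of a reductive $p$-adic group $H$ is the quotient of $\Hz(H)$ by the subspace spanned by commutators $f_1*f_2-f_2*f_1$; by a theorem of Bernstein–Deligne–Kazhdan (or the trace Paley–Wiener theorem) it is dual to the space of (virtual) characters, and it decomposes along the Bernstein components of $H$. The key structural fact I would use is that the cocenter $\HH(H)$ is a module over the Bernstein center $\mf{Z}(H)$, and in fact on each component the cocenter is a finitely generated module over the corresponding component of $\mf{Z}(H)$ — concretely, on a component $\Hz(H)_{\mf{s}}$ one has $\HH(H)_{\mf{s}}$ generated over $\mf{Z}(H)_{\mf{s}}$ by the images of finitely many explicit elements (e.g. via the isomorphism of $\Hz(H)_{\mf{s}}$ with matrices over a twisted group algebra).

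First I would set up the map on the level of Hecke algebras restricted to the relevant blocks. By the compatibility of $\rec_r$ with supercuspidal supports and parabolic induction established in Theorem \ref{characterizationtheorem}, the Jacquet–Langlands correspondence induces an injection of the set of Bernstein components of $G$ into those of $G^*$: a component $\mf{s}=(M,\sigma)_G$ of $G$ maps to the component $\mf{s}^*=(M^*,\sigma^*)_{G^*}$ determined by the same cuspidal support under $JL$. On components not in the image, $\overline{\mf{T}}$ will be zero; on components in the image, I would use that the $\mf{T}$-module structure lets one push the generators of $\HH(G^*)_{\mf{s}^*}$ over $\mf{Z}(G^*)_{\mf{s}^*}$ to generators of $\HH(G)_{\mf{s}}$ over $\mf{Z}(G)_{\mf{s}}$, using surjectivity of $\mf{T}$. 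Concretely: pick $f^*\in\Hz(G^*)$ matching a chosen $f\in\Hz(G)$ (these exist by the hypotheses recalled before Theorem \ref{theorem2}); define $\overline{\mf{T}}(Z^**f^*)=\mf{T}(Z^*)*f$ and extend. Theorem \ref{MatchDist} guarantees $\mf{T}(Z^*)*f$ and $Z^**f^*$ again match, which is what makes the assignment interact correctly with the component decomposition; surjectivity of $\overline{\mf{T}}$ then follows from surjectivity of $\mf{T}$ together with the fact that the matching map on cocenters is known to be surjective (the transfer $\Hz(G)\to\HH(G^*)$ has been shown to have image a direct summand, by Badulescu and others, and is onto the components coming from $JL$).

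The main obstacle will be well-definedness: showing the formula $Z^**f^*\mapsto\mf{T}(Z^*)*f$ does not depend on the choices of $Z^*$, $f^*$, and the lift $f$, i.e. that it descends to a well-defined linear map on the subspace of $\HH(G^*)$ spanned by such elements and is zero on a complement. I would handle this dually: an element of $\HH(H)$ is determined by the family of scalars it assigns to all irreducible characters (equivalently, by pairing against $\Pi(H)$), so it suffices to check that for every $\pi\in\Pi(G^*)$ the trace $\tr\pi(Z^**f^*)$ depends only on the image of $Z^**f^*$ under the prospective $\overline{\mf{T}}$ — more precisely, to exhibit, for each $\pi'\in\Pi(G)$, a corresponding $\pi\in\Pi(G^*)$ (supported on the matching component) with $\tr\pi'(\mf{T}(Z^*)*f)=\tr\pi(Z^**f^*)$, and to verify the map on the remaining characters of $G^*$ (those whose component is not in the image of $JL$) kills the relevant elements. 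This is exactly where the explicit description of $\mf{T}$ from Proposition \ref{5.1Prop} and Theorem \ref{5.1.Thm}, the action of $\mf{Z}(G^*)$ on characters by the scalar $Z^*\mapsto Z^*(\pi)$, and the compatibility of $\rec_r$ with cuspidal support come together; once the trace identity is in place, linearity, independence of choices, and surjectivity are formal.
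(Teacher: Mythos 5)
Your approach has a genuine gap, and it also inverts the logical order of the paper. The paper defines $\overline{\mf{T}}$ as the restriction of $JL^\vee\colon R(G^*)^\vee_\C\to R(G)^\vee_\C$ to the cocenters, which are embedded in these dual spaces by Kazhdan's theorem; the entire content of the proof is then to verify, via the trace Paley--Wiener theorem, that $JL^\vee$ carries the image of $\HH(G^*)$ into the image of $\HH(G)$ (support on finitely many inertial classes is preserved because $JL$ commutes with parabolic induction and one can take preimages of components under the morphism $\mf{T}$; algebraicity in unramified twists is preserved because $JL(\chi\sigma)=\chi JL(\sigma)$). Surjectivity is then obtained by exhibiting the explicit section $LJ^\vee$ and running the same two checks for $LJ$. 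Your proposal never supplies the analogue of this verification. Unwinding your well-definedness discussion shows only that on elements of the form $Z^**f^*$ with $f^*$ matching $f$, the functional $\pi\mapsto\tr(Z^**f^*:JL(\pi))$ is represented by the genuine function $\mf{T}(Z^*)*f$, up to the sign $(-1)^{n-r}$ from Lemma \ref{WIF lemma} (so your map would in fact be $(-1)^{n-r}\overline{\mf{T}}$, not $\overline{\mf{T}}$). But such elements do not span $\HH(G^*)$: as the paper points out, $1_{I^*}$ matches no function on $G$ whatsoever, so the matchable $f^*$ form a proper subspace. You would still need to (i) produce a canonical complement on which to declare the map zero, and (ii) prove that $\pi\mapsto\tr(\bar{f^*}:JL(\pi))$ lies in $\HH(G)\subset R(G)^\vee_\C$ for an \emph{arbitrary} $\bar{f^*}\in\HH(G^*)$. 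Point (ii) is exactly the trace Paley--Wiener argument your sketch is trying to bypass, and the finite generation of the cocenter over the Bernstein center, while true, does not substitute for it.

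There is also a circularity and hypothesis problem: Theorem \ref{MatchDist}, which you take as an input, is proved in the paper later, in a section assuming $\mathrm{char}(F)=0$, and its proof itself rests on the trace identities and Kazhdan density; the cocenter theorem is proved unconditionally and does not use the transfer of orbital integrals at all. The one salvageable idea in your sketch is the surjectivity argument: granting that every $f\in\Hz(G)$ admits a matching $f^*$, one gets $\overline{\mf{T}}(\bar{f^*})=(-1)^{n-r}\bar{f}$ and hence surjectivity without constructing $LJ^\vee$ --- but this both reintroduces the characteristic and transfer hypotheses that the paper's purely formal section $LJ^\vee$ avoids, and in any case only applies once $\overline{\mf{T}}$ has been shown to exist on all of $\HH(G^*)$, which is the step your proposal is missing.
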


\subsection*{Acknowledgments}

I would like to thank my advisor Thomas Haines for his help throughout the process of writing this paper. This research has been partially funded by NSF grants DMS-0901723 and DMS-1406787.

\section{Notation} \label{section notation}

Let $F$ be a local nonarchimedian field, and $D$ a $d^2$-dimensional $F$-central division algebra. We write $\Oh=\Oh_F$ for the valuation ring of $F$, $\varpi$ for a uniformizing parameter, and $q$ for the cardinality of the residue field. Similarly we will write $\Oh_D$ and $\varpi_D$ for the valuation ring and a choice of uniformizer for $D$, respectively. Any restrictions on the characteristic of $F$ or its residue field will be made in the section where they arise. For us $G$ will denote the group $\GL_r(D)$ and $G^*$ the group $\GL_n(F)$, where $n=rd$, unless otherwise specified. We will sometimes write $\varphi_\pi$ instead of $\rec_n(\pi)$ if we wish to suppress the index $n$. Given a representation $\sigma$ of a Levi subgroup $M\subset G$, we will write $\iota_P^G \sigma$ for the normalized parabolically induced representation of $G$ along some parabolic subgroup $P$ with Levi factor $M$. If $\sigma$ is a finite-length representation and we are working inside the Grothendieck group of such representations, we may sometimes write  $\iota_M^G \sigma$ for the corresponding semisimplification, as it does not depend on the parabolic subgroup $P$. We similarly write $r_P^G(\pi)$ or $\pi_N$ for the normalized Jacquet module with respect to a parabolic subgroup $P$ with unipotent radical $N$. We write $r_M^G(\pi)$ for its image in the Grothendieck group. We also write $X(G)$ for its group of unramified characters, and $G^1$ for the kernel of the Kottwitz homomorphism. In our case, $G^1$ is those elements of $G$ whose reduced norm is in $\Oh^\times$. Similar notations hold for $G^*$. All representations are complex and smooth. 

We denote by $\mf{X}_G$ the Bernstein variety of supercuspidal supports of $G$, and write $\mf{s}=[M, \sigma]_G$ for an inertial support. We write $\mf{X_s}$ for the corresponding Bernstein component. We write $\mf{Z}(G) = \C[\mf{X}_G]$ for the Bernstein center of $G$. The symbol $\Pi(G)$ denotes the set of all irreducible smooth representations of $G$, and $\Pi^2(G)$ denotes the subset of essentially square-integrable representations. We denote by $\Hz(G)$ the Hecke algebra of compactly-supported, complex-valued functions on $G$; the choice of Haar measure here will be made clear when it is relevant.  
 
 \section{The LLC and LLC+ for $\GL_n(F)$} \label{section LLCforGLn}
 
 The goal of this section is to state two compatibilities satisfied by the Local Langlands Correspondence for $G^*:=\GL_n(F)$. We recall the necessary aspects of the Bernstein-Zelevinsky classification of irreducible smooth representations of $G^*$ in terms of supercuspidals. We freely use the notation and terminology of \cite{zelevinsky1980induced}. Let 
 $$\nu=|\det|_F: G^*\to \C^\times.$$ We use the same symbol for all $n$. Let $\rho$ be a supercuspidal representation of $\GL_{\frac{n}{k} }(F)$ for some divisor $k$ of $n$. If $\Delta=[\rho, \nu^{k-1}\rho]$ is a segment of supercuspidal representations, we write $Q(\Delta)$ for the unique irreducible quotient of the normalized parabolically induced representation
 $$\rho\times \nu\rho \times \cdots \times \nu^{k-1}\rho := i_P^{G^*} (\boxtimes_{i=0}^{k-1} \nu^i \rho)$$ where $P$ is the block upper-triangular parabolic subgroup with Levi factor $\GL_{
\frac{n}{k} } (F)^k$ Then $Q(\Delta)$ is essentially square-integrable, and all such representations so arise for a unique $\Delta$. Let $\pi$ be an arbitrary irreducible smooth representation of $G^*$. By \cite{zelevinsky1980induced}, $\pi$ is the unique irreducible quotient of $i_{P'}^{G^*} (Q(\Delta_1)\boxtimes \ldots \boxtimes Q(\Delta_r))$ for some segments $\Delta_i$, subject only to the condition that $i<j$ implies $\Delta_i$ does not precede $\Delta_j$. We write $\pi=Q(\Delta_1, \ldots, \Delta_r)$.

\begin{lemma} \label{LangQuotLLC}
Let $\pi=Q(\Delta_1, \ldots, \Delta_r)$ as above, and let $\sigma=\boxtimes_{i=1}^r Q(\Delta_i)\in \Pi^2(M)$ where $M$ is the Levi subgroup $M=\prod\limits_{i=1}^r \GL_{n_i}(F)$. Then the Langlands parameters of $\pi$ and $\sigma$ are equal. That is, if
  \begin{equation*}
\varphi_\sigma:=\bigoplus\limits_{i=1}^r \varphi_{Q(\Delta_i)} :W_F' \ra {^L} M =\left( \prod\limits_{i=1}^r \GL_{n_i}(\C) \right)\rtimes W_F \subset \GL_n(\C)\rtimes W_F={^L } G 
  \end{equation*}
  corresponds to $\sigma$ and $\varphi_\pi$ corresponds to $\pi$, then they are conjugate by $\widehat{G}$. 
\end{lemma}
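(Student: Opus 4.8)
The plan is to deduce the lemma from the uniqueness characterization of the Local Langlands Correspondence $\rec^*$ for $\GL_m(F)$ recalled in the introduction --- it is the unique family of bijections $\Pi(\GL_m(F)) \to \Phi(\GL_m(F))$, $m \geq 1$, compatible with class field theory when $m=1$, with twisting and with contragredients, and preserving the $L$- and $\varepsilon$-factors of pairs of \cite{jacquet1983rankin} --- rather than from any internal manipulation of segments.

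First I would introduce the candidate family of maps $R \colon \Pi(\GL_m(F)) \to \Phi(\GL_m(F))$ defined on $\pi = Q(\Delta_1,\ldots,\Delta_r)$ by $R(\pi) := \bigoplus_{i=1}^r \rec^*(Q(\Delta_i))$, where $Q(\Delta_i) \in \Pi^2(\GL_{n_i}(F))$. By the Bernstein--Zelevinsky classification \cite{zelevinsky1980induced}, $\pi$ determines the multiset $\{\Delta_1,\ldots,\Delta_r\}$ uniquely --- the condition that $\Delta_i$ not precede $\Delta_j$ for $i<j$ merely selects a representative ordering --- so $R$ is well defined and visibly insensitive to reordering, matching the symmetry of $\oplus$. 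On the Galois side every Frobenius-semisimple Weil--Deligne representation is a direct sum of indecomposables, uniquely up to permutation, and $\rec^*$ restricts to a bijection between $\bigsqcup_{m \mid n}\Pi^2(\GL_m(F))$ and the indecomposable Weil--Deligne representations of dimension $n$; hence $R$ is bijective. Once I know $R = \rec^*$ the lemma follows immediately: $\varphi_\pi = R(\pi) = \bigoplus_i \varphi_{Q(\Delta_i)} = \varphi_\sigma$ as Weil--Deligne representations, and the asserted $\widehat{G}$-conjugacy is automatic because $\varphi_\sigma$ as written is precisely this direct sum placed inside ${}^{L}M \subset {}^{L}G$, whose $\widehat{G}$-conjugacy class is its isomorphism class.

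The substantive step is then to check that the family $R$ satisfies the properties that characterize $\rec^*$. Compatibility with class field theory is vacuous ($m=1$), and compatibility with twisting and contragredients is inherited from the essentially square-integrable case using that $\chi \cdot Q(\Delta_1,\ldots,\Delta_r) = Q(\chi\Delta_1,\ldots,\chi\Delta_r)$, that the contragredient permutes the segments by $\Delta \mapsto \Delta^\vee$, and that twisting and dualizing commute with $\oplus$ on parameters. The key point is preservation of $L$- and $\varepsilon$-factors of pairs: for $\pi = Q(\Delta_1,\ldots,\Delta_r)$ and $\pi' = Q(\Delta'_1,\ldots,\Delta'_s)$ the Rankin--Selberg factors are multiplicative in each variable along the Bernstein--Zelevinsky classification \cite{jacquet1983rankin}, so $L(\pi \times \pi', s) = \prod_{i,j} L(Q(\Delta_i)\times Q(\Delta'_j), s)$ and likewise for $\varepsilon$; on the Galois side $L$ and $\varepsilon$ are additive under $\oplus$, so $L(R(\pi)\otimes R(\pi'),s) = \prod_{i,j} L(\rec^*(Q(\Delta_i)) \otimes \rec^*(Q(\Delta'_j)), s)$; and the two products agree term by term because $\rec^*$ already matches these factors on essentially square-integrable representations. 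By uniqueness, $R = \rec^*$.

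I expect the main obstacle to be exactly this last step, in that it rests on two inputs that I would invoke as black boxes: the multiplicativity of the Rankin--Selberg $L$- and $\varepsilon$-factors of pairs under parabolic induction (Jacquet, Piatetski-Shapiro and Shalika), and the fact --- part of the content of LLC for $\GL_n$ --- that $\rec^*$ preserves these factors on essentially square-integrable representations and carries $\Pi^2$ bijectively onto the indecomposable parameters. If one prefers instead to cite directly that $\rec^*$ is compatible with the Langlands classification, i.e.\ sends Langlands data to direct sums --- a standard, frequently quoted property of the correspondence for $\GL_n$ --- then the lemma is immediate, but the substance has simply been relocated into the literature.
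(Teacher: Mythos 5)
Your argument is correct, and its analytic core is the same as the paper's: everything reduces to the multiplicativity of Rankin--Selberg factors along the Langlands/Bernstein--Zelevinsky classification, $L(s,\pi\times\pi')=\prod_i L(s,Q(\Delta_i)\times\pi')$, combined with the fact that $\rec^*$ already matches factors on $\Pi^2$. Where you differ is in which uniqueness statement closes the argument. The paper fixes a single $\pi$ and invokes Henniart's converse theorem \cite[Theorem 1.7]{henniart2002caracterisation}: a Frobenius-semisimple Weil--Deligne representation of dimension $n$ is determined by its $L$-factors twisted by indecomposables of dimension $<n$. This needs only $L$-factors, no $\varepsilon$-factors, no bijectivity, and no compatibility with twisting or contragredients. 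You instead build the entire candidate family $R$ and appeal to the uniqueness of LLC as a family, which forces you to verify bijectivity, twisting, contragredients, and $\varepsilon$-factors of pairs as well; all of these do go through (for non-generic representations the $\varepsilon$-factor of a pair is essentially defined via the Langlands data, so multiplicativity there is close to tautological), but the paper's route is leaner. Your approach buys a slightly stronger-sounding conclusion --- you recover the whole of ``$\rec^*$ is compatible with the Langlands classification'' rather than one instance of it --- which, as you note at the end, is the standard form in which this fact is usually quoted. One small slip: $\rec^*$ identifies $\Pi^2(\GL_m(F))$ with the $m$-dimensional indecomposable parameters for every $m\le n$, not only for $m\mid n$; the $n_i$ in the classification form a composition of $n$ and need not divide $n$. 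This does not affect the argument.
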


 \begin{proof} The result is tautological if $\pi$ is essentially square-integrable; in particular, if it is supercuspidal. So we assume it is not. Then by \cite[Theorem 1.7]{henniart2002caracterisation}, it is enough to show that 
 \begin{equation*}
L(s, \varphi_\pi\otimes \tau) = L(s, \varphi_\sigma\otimes \tau)
 \end{equation*}for every $\tau\in \mc{G}^2(r)$, for $r=1, \ldots, n-1$, where $\mc{G}^2(r)$ denotes the indecomposable $r$-dimensional Weil-Deligne representations.  We compute 
 \begin{equation*}
 \begin{split}
 L(s, \varphi_\sigma\otimes \tau)  &=  L(s, \bigoplus_{i=1}^r \varphi_{Q(\Delta_i)} \otimes \tau)  \\
            &= \prod\limits_{i=1}^r L(s, \varphi_{Q(\Delta_i)}\otimes \tau) \\
            & =\prod\limits_{i=1}^r L(s, Q(\Delta_i)\times \pi_\tau)          
 \end{split}
 \end{equation*}
 where in the last equality we have used LLC and written $\pi_\tau$ for the representation of $\GL_r(F)$ corresponding to $\tau$. Meanwhile  $$L(s, \varphi_\pi\otimes \tau) = L(s, \pi\times \pi_\tau)$$ by the LLC, and this is equal to the above by of \cite[\S2.8.]{henniart2002caracterisation}, using the fact that $Q(\Delta_1, \ldots, \Delta_r) = J(Q(\Delta_1), \ldots, Q(\Delta_r))$, in the notation of that section.  \end{proof}

 The above lemma shows how ``essentially square-integrable support'' of a representation of $\GL_n(F)$ determines its Langlands parameter. We need to also understand the relationship between Langlands parameters and supercuspidal supports. The following definition originates in \cite{SBC}.

\begin{definition} Let $G$ be a connected reductive group over a local field $F$. We say LLC+ holds for $G$ if LLC holds for $G$ and all its $F$-Levi subgroups, and that we have a compatibility with parabolic induction as follows. If $M\subset G$ is a Levi subgroup then $^L M \subset {^L} G$, well-defined up to $\widehat{G}$-conjugacy. If $\sigma\in \Pi(M/F)$, and if $\pi\in \Pi(G/F)$ is an irreducible subquotient of the normalized parabolically induced representation $i_P^G(\sigma)$, where $P=MN$ is an $F$-parabolic of $G$ with $F$-Levi subgroup $M$, then $\varphi_\pi|_{W_F}:W_F\ra {^L}G$ and $\varphi_\sigma|_{W_F}:W_F\ra {^L}M \subset {^L}G$ are $\widehat{G}$-conjugate. 

\end{definition}
 
 For $\GL_n(F)$ this is handled in \cite[Theorem 1.2.]{scholze2013local}, so LLC+ for $\GL_n(F)$ is known.

  \section{The LLC and LLC+ for $\GL_r(D)$} \label{section LLCforGLD}
  
  \subsection{The Definition of LLC for $\GL_r(D)$}

The purpose of this section is to address the corresponding two compatibilities of LLC for the inner form $G=\GL_r(D)$, where $D$ is a $d^2$-dimensional $F$-central division algebra. So we will carefully state what the LLC is for $G$, and then discuss the analogous compatibility conditions. Now $^L G = \widehat{G}\rtimes W_F=\GL_n(\C)\times W_F$ is its $L$-group, where $n=rd$. In this case LLC asserts that there is a bijection $$\Pi(G/F)\ra \Phi(G/F)$$ where $\Pi(G/F)$ is the set of isomorphism classes of irreducible smooth representations of $G(F)$, and $\Phi(G/F)$ is the set of $\widehat{G}$-conjugacy classes of admissible homomorphisms $\varphi:W_F'=W_F \ltimes \C \ra {^L}G$, and this bijection should satisfy various desiderata (\cite{Borel}, section 10). The notion of admissibility for the Langlands parameters depends on the particular inner form $G$; we refer to Theorem \ref{characterizationtheorem}. 
 

Let $$JL:\Pi^2(G)\ra \Pi^2(G^*)$$ be the Jacquet-Langlands correspondence; we use the same letter for all $r$ and $n$. It is well-known that $JL$ is characterized by a character identity on corresponding regular conjugacy classes; see for example \cite{bdkv1984repr}. Explicitly, this states that if $\Theta_\pi$ is the Harish-Chandra character of $\pi$, then we have 
$$\Theta_\pi(g) = \Theta_{JL(\pi)}(g^*) (-1)^{n-r}$$ whenever $g\in G$ and $g^*\in G^*$ correspond. See section \ref{sectionmatchingdist} below for this notion. 
We recall the necessary parametrization of irreducible representations of $G$, as described in \cite{badulescu2007jacquet}. Consider the absolute value of the reduced norm 
$$\nu:= |Nrd|_F: G\to \C^\times.$$ We use the same letter for all $r$. Let $\pi$ be a smooth irreducible representation of $G$. As for $\GL_n(F)$, it is the unique irreducible quotient of a ``standard representation'' $i_{P}^{G}\sigma$. This means that $\sigma = \otimes_{i=1}^k\sigma_i$ where $\sigma_i$ are essentially square-integrable with a condition on the $\sigma_i$ 
parallel to the ``does not precede'' condition on segments above. Explicitly, $\sigma_i = \nu^ {e_i} \sigma_i^u$ for some real number $e_i$ and square-integrable representation $\sigma_i^u$, and the condition is that $e_i \geq e_{i+1}$ for all $1\leq i\leq k-1$. Then the standard representation is uniquely determined by $\pi$ up to permutations of the $\sigma_i$ which do not change the $e_i$. See \cite[\S 2.1.]{badulescu2007jacquet}, for example. 
 
\begin{definition}
We define the Langlands parameter of $\pi\in \Pi(\GL_r(D))$ to be $\varphi_\pi:=\bigoplus \varphi_{\text{JL}(\sigma_i)}$. 
\end{definition}

 Lemma \ref{LangQuotLLC} shows this is consistent with the case $D=F$. With this definition we will show the + part of LLC+. From \cite{badulescu2007jacquet} we have an extension of $JL$ to the Grothendieck groups $R(G)$ and $R(G^*)$ of finite length representations as follows. Let $\B_G$ (and similarly $\B_{G^*}$) be the set of standard representations $i_L^G \sigma$. Then $\B_G$ is a base for $R(G)$, and we define 
 $$JL: \B_G\hookrightarrow \B_{G^*}$$  
 $$i_L^G \sigma \mapsto i_{L^*}^{G^*}JL(\sigma)$$ 
 where $L^*$ is a Levi subgroup whose conjugacy class is associated to the conjugacy class of $L$. We remark that this map obviously takes standard representations of $G$ to those of $G^*$, since they have the same form. Concretely, $L\cong \prod_{i=1}^k\GL_{r_i}(D)$ and $L^*\cong \prod_{i=1}^k \GL_{r_i d}(F)$ with $\sum\limits_{i=1}^k r_i = r$. We extend linearly to an injective homomorphism of free abelian groups
 $$JL: R(G)\hookrightarrow R(G^*).$$ 
 However, $JL$ does not map irreducibles to irreducibles, and in fact may send an irreducible representation to a non-genuine virtual representation. Another natural function considered in \cite{badulescu2007jacquet} is 
 $Q:\Pi(G)\hookrightarrow \Pi(G^*),$ defined to make the diagram  $$\xymatrix{ \B_G \ar[r]^{JL} \ar[d]^{\cong} & \B_{G^*} \ar[d]^{\cong}\\ \Pi(G)  \ar[r]^{Q} & \Pi(G^*)  }$$ commute, where the vertical arrows are Langlands quotients. Our definition and the above lemma imply that 
 $\varphi_{\pi}=\varphi_{Q(\pi)}.$ The map $JL$ is also compatible with arbitrary parabolic inductions. See \cite[Theorem 3.6.]{badulescu2007jacquet} and its proof. For future use we also note that there is a natural surjective map 
 $$LJ: R(G^*)\to R(G)$$ 
defined on the base of standard representations to be an inverse to $JL$ on its image, and zero on the complement.

 \begin{Remark}
 It is conjectured in \cite{badulescu2007jacquet} that $LJ$ takes irreducibles to irreducibles up to sign, or to zero. This remains open in general. 
 \end{Remark}

\begin{proposition}
With the definition above, the + part of LLC+ holds for $\GL_r(D)$. 
\end{proposition}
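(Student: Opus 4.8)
The plan is to reduce the statement to the known case of $\GL_n(F)$ via the compatibility of $JL$ with parabolic induction. First I would unwind what must be shown: given a Levi subgroup $M\subset G$, with $M\cong \prod_{i=1}^k \GL_{r_i}(D)$ and $\sum r_i = r$, a representation $\sigma\in\Pi(M)$, and an irreducible subquotient $\pi$ of $i_P^G(\sigma)$, I must show $\varphi_\pi|_{W_F}$ and $\varphi_\sigma|_{W_F}$ are $\widehat{G}$-conjugate. Here $\varphi_\sigma$ is built componentwise from the parameters $\varphi_{\sigma^{(i)}}$ of the factors $\sigma^{(i)}\in\Pi(\GL_{r_i}(D))$, each defined by the $JL$-of-essentially-square-integrable-support recipe; so it suffices to understand how the parameter attached to a single $\GL_{r_i}(D)$ is compatible with inclusion into $M$, and then how $M$'s parameter relates to $G$'s for a subquotient of $i_P^G\sigma$.

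The key step is the following: by the definition of $\varphi_\pi$ in terms of the essentially square-integrable support of $\pi$, combined with the remark that $\varphi_\pi = \varphi_{Q(\pi)}$ where $Q:\Pi(G)\hookrightarrow\Pi(G^*)$ is the Badulescu map, it is enough to track everything through $Q$. So I would argue: $Q$ is compatible with parabolic induction in the sense that $Q$ of a Langlands quotient of $i_L^G\sigma$ is the Langlands quotient of $i_{L^*}^{G^*}JL(\sigma)$ (this is exactly the commuting square defining $Q$ together with \cite[Theorem 3.6.]{badulescu2007jacquet} on compatibility of $JL$ with parabolic induction). Given an irreducible subquotient $\pi$ of $i_P^G(\sigma)$, I would pass to its essentially square-integrable support, apply $JL$ factor-by-factor to land in a standard module on the $G^*$ side whose constituents all have the "same shape," and then invoke LLC+ for $\GL_n(F)$ (known by \cite[Theorem 1.2.]{scholze2013local}): the subquotient $Q(\pi)$ of an induced representation from $L^*$ has $\varphi_{Q(\pi)}|_{W_F}$ conjugate to $\varphi_{JL(\sigma)}|_{W_F}$, which is by construction $\bigoplus_i \varphi_{JL(\sigma^{(i)})}|_{W_F} = \varphi_\sigma|_{W_F}$. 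Chaining the equalities $\varphi_\pi = \varphi_{Q(\pi)}$ and the $W_F$-restriction from LLC+ on $\GL_n(F)$ gives the claim.

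One subtlety I would address carefully: $\pi$ is an arbitrary irreducible \emph{subquotient} of $i_P^G(\sigma)$, not necessarily the Langlands quotient, and the recipe $\varphi_\pi = \varphi_{Q(\pi)}$ is phrased via Langlands quotients. The fix is that LLC+ for $\GL_n(F)$ already handles arbitrary subquotients, and the Jacquet-module/socle-and-cosocle combinatorics of the Bernstein--Zelevinsky theory guarantee that every irreducible subquotient of $i_P^G(\sigma)$ has essentially square-integrable support drawn from the multiset $\{\sigma^{(i)}\}$ up to the segment operations that do not change the underlying supercuspidal support — equivalently, its image under $JL$-on-the-Grothendieck-group is supported on standard modules induced from $L^*$. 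So the restriction to $W_F$, which only sees the supercuspidal support (as the paper emphasizes), is unchanged. I would make this precise by noting that $\varphi_\pi|_{W_F}$ depends only on the supercuspidal support of $\pi$, that the supercuspidal support of any subquotient of $i_P^G(\sigma)$ equals that of $\sigma$ (transitivity of supercuspidal support under induction), and that $JL$ preserves this data in the appropriate sense.

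The main obstacle I anticipate is bookkeeping rather than conceptual: precisely matching the Levi $L\subset G$ with its "associated" Levi $L^*\subset G^*$ and checking that the identification $^LM\subset{^L}G$ used on the $G$-side is compatible, under $\widehat{G^*}=\widehat{G}=\GL_n(\C)$, with the identification $^LM^*\subset{^L}G^*$ used by LLC+ for $\GL_n(F)$ — i.e. that no spurious outer twist is introduced when one passes through $JL$. This is where I would spend the most care, checking that $JL$ on square-integrable representations of $\GL_{r_i}(D)$ composed with $\rec^*$ lands in the block-diagonal $\GL_{r_id}(\C)$ exactly as required, so that the direct-sum parameter on the $M$-side matches the one LLC+ produces on the $M^*$-side. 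Once that identification is pinned down, the proof is a short diagram chase.
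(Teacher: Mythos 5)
Your overall strategy --- push everything to $\GL_n(F)$ through $JL$ and $Q$ and then quote LLC+ for $\GL_n(F)$ --- is the same as the paper's, and your reductions (to essentially square-integrable $\pi=\sigma$, via transitivity of supercuspidal support and the componentwise definition of the parameter on a Levi) are fine. But the step you dispose of with ``$JL$ preserves this data in the appropriate sense'' is precisely the crux, and as stated it is either circular or false. The assertion that ``$\varphi_\pi|_{W_F}$ depends only on the supercuspidal support of $\pi$'' is an equivalent reformulation of the + part of LLC+ for $\GL_r(D)$, i.e.\ the thing being proved. And $JL$ does \emph{not} preserve supercuspidal supports in any naive sense: if $\sigma\in\Pi^2(G)$ has supercuspidal support $(L,\tau)$ with $\tau=\rho\otimes\nu^{s}\rho\otimes\cdots$, then $JL(\tau)$ is a tensor product of generalized Steinberg representations, not a supercuspidal, and the supercuspidal support of $JL(\sigma)$ lives on a strictly smaller Levi $M_\sigma^*$ than the Levi $L^*$ associated to $L$. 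So after all your reductions you still owe the statement that $Q(\sigma)=JL(\sigma)$ is an honest irreducible subquotient of $i_{L^*}^{G^*}JL(\tau)$ (equivalently, that its supercuspidal support is that of $i_{L^*}^{G^*}JL(\tau)$); only then does LLC+ for $\GL_n(F)$ yield $\varphi_{JL(\sigma)}|_{W_F}\sim\varphi_{JL(\tau)}|_{W_F}$, which is the desired identity.

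The paper closes exactly this gap with a Grothendieck-group computation: write $i_L^G\tau=\sigma+\sum a_i\, i_{L_i}^G\tau_i$ in the basis of standard representations of $R(G)$, with all $L_i$ proper (possible because $\sigma$ is the unique essentially square-integrable constituent of $i_L^G\tau$), apply the linear map $JL:R(G)\hookrightarrow R(G^*)$, which takes standard modules to standard modules and commutes with parabolic induction, and compare coefficients in the standard basis of $R(G^*)$ to conclude that $Q(\sigma)$ occurs in $i_{L^*}^{G^*}JL(\tau)$. Note that Badulescu's compatibility of $JL$ with parabolic induction, which you cite, is an identity of \emph{virtual} representations; without this bookkeeping in the standard basis it does not by itself tell you that the irreducible $Q(\sigma)$ survives as a genuine subquotient rather than disappearing through cancellation. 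The Levi-identification issues you flag in your last paragraph are, by contrast, genuinely routine.
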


This proposition is used in section 13.2. of \cite{HainesSatake}.

\begin{proof}
Let $\pi$ be the unique irreducible quotient of $i_P^G\sigma$ as above. Since $\pi$ and $\sigma$ have the same supercuspidal support, we may assume $\pi=\sigma$, i.e., that $\pi\in \Pi^2(G)$. Following \cite{badulescu2007jacquet} (or \cite{tadic1990induced}), the supercuspidal support of $\sigma$ has the form $\tau =\rho \otimes \nu^s \rho \otimes\ldots \otimes \nu^{(k-1)s}\rho$, where $\nu=|Nrd|_F$ and $s$ is an integer determined by $\sigma$. Further, $\sigma$ is the unique irreducible quotient of $i_P^G\tau $. We must show that $$\varphi_\sigma|_{W_F} = \varphi_\tau|_{W_F}$$ or equivalently 
$$\varphi_{JL(\sigma)}|_{W_F}=\varphi_{JL(\tau)}|_{W_F}$$ which is then equivalent to showing that $JL(\sigma)(=Q(\sigma) )$ and $JL(\tau)$ have the same supercuspidal support. Towards this end, let $P=LU$ be a Levi decomposition of $P$, and write 
$$i_L^G \tau = \sigma + \sum a_i i_{L_i}^G \tau_i$$ in $R(G)$, where all the summands on the RHS are standard representations, and the $L_i$ are all proper $F$-Levi subgroups of $G$, since $\sigma$ is the only essentially square-integrable subquotient of $i_L^G \tau$. Note that $i_P^G \tau$ is not itself a standard representation if $k>1$. Applying $JL$ to this equation gives $$JL (i_L^G \tau) = JL (\sigma)  + \sum a_i JL( i_{L_i}^G \tau_i)$$ hence 
$$i_{L^*}^{G^*} JL(\tau) = Q(\sigma) + \sum a_i i_{L_i^*}^{G^*} JL(\tau_i)$$ where we have used that $JL$ commutes with parabolic induction for the LHS, and the definition of $JL$ for the RHS. Now $Q(\sigma)$ is itself essentially square-integrable and hence a standard representation, and standard representations form a base. All the Levi subgroups $L_i^*$ are proper, so $Q(\sigma)$ is not a subquotient of any $\iota_{L_i^*}^{G^*} JL(\tau_i)$, and thus $Q(\sigma)$ is indeed a subquotient of $i_{L^*}^{G^*} JL(\tau)$. Hence LLC+ for $\GL_r(D)$ follows from LLC+ for $\GL_n(F)$. 
\end{proof}

Note that $JL:\Pi^2(G)\to \Pi^2(G^*)$ does not preserve supercuspidals, though the inverse of $JL$ does. For ease of reference, we record the following result, which characterizes the image of $JL$ restricted to supercuspidal representations. 

\begin{lemma} \label{BaduLemma}
The representation $\sigma\in \Pi^2(G)$ is supercuspidal iff $JL(\sigma)$ has vanishing Jacquet modules with respect to all Levi subgroups $L^*\subset \GL_n(F)$ whose conjugacy classes correspond to those of some Levi subgroups of $\GL_r(D)$.
\end{lemma}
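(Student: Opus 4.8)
The plan is to translate both conditions into statements about supercuspidal supports. On the side of $G$, the representation $\sigma\in\Pi^2(G)$ is supercuspidal exactly when its supercuspidal support is $(G,\sigma)$ itself. On the side of $G^*=\GL_n(F)$, the relevant input is the standard fact (a special case of the geometric lemma of \cite{zelevinsky1980induced}): for a finite-length representation $\pi^*$ and a Levi subgroup $L^*\subseteq G^*$, one has $r_{L^*}^{G^*}(\pi^*)=0$ if and only if the supercuspidal support of $\pi^*$ is not $G^*$-conjugate into $L^*$. The Levi subgroups appearing in the statement are those whose conjugacy class is that of some $\prod_{i=1}^k\GL_{r_i d}(F)$ with $\sum_i r_i=r$; since the Jacquet module with respect to $L^*=G^*$ is never zero, these are understood to be the \emph{proper} such Levis. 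Thus the lemma is equivalent to the assertion that $\sigma$ is supercuspidal if and only if the supercuspidal support of $JL(\sigma)$ is not conjugate into any proper Levi of the form $\prod_{i=1}^k\GL_{r_i d}(F)$.

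Next I would pin down the supercuspidal support of $JL(\sigma)$. By the classification recalled above, $\sigma$ is a generalized Steinberg $\mathrm{St}_k^D(\rho)$ for a unique divisor $k\mid r$ and a supercuspidal representation $\rho$ of $\GL_{r/k}(D)$, and $\sigma$ is supercuspidal precisely when $k=1$. By the explicit description of $JL$ on essentially square-integrable representations (\cite{badulescu2007jacquet}, resting on \cite{bdkv1984repr}), one has $JL(\sigma)=\mathrm{St}_m(\rho_0^*)$ for a supercuspidal representation $\rho_0^*$ of $\GL_{rd/m}(F)$, where $m=k\,s_\rho$ and $JL_{r/k}(\rho)=\mathrm{St}_{s_\rho}(\rho_0^*)$; moreover $\gcd(s_\rho,\,r/k)=1$ because $\rho$ is supercuspidal on $\GL_{r/k}(D)$ (loc. cit.). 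Since $k\mid r$ this yields $\gcd(m,r)=\gcd(k s_\rho,\,r)=k\cdot\gcd(s_\rho,\,r/k)=k$, so $\sigma$ is supercuspidal if and only if $\gcd(m,r)=1$. The supercuspidal support of $JL(\sigma)=\mathrm{St}_m(\rho_0^*)$ consists of the $m$ twists $\nu^j\rho_0^*$, $0\le j\le m-1$ (up to an overall twist), arranged on $\GL_{rd/m}(F)^m$.

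Finally I would run the combinatorics. A conjugate of $\GL_{rd/m}(F)^m$ is contained in $\prod_{i=1}^k\GL_{r_i d}(F)$ (with $\sum_i r_i=r$) exactly when the $m$ equal blocks can be partitioned into groups, the $i$-th of size $j_i$, with $j_i\cdot(rd/m)=r_i d$; this forces $m\mid j_i r$, i.e.\ $(m/\gcd(m,r))\mid j_i$, i.e.\ $s_\rho\mid j_i$. A partition of the $m=k s_\rho$ blocks into at least two such groups exists if and only if $k\ge 2$, and in that case the twisted segment forming the supercuspidal support does fit the resulting block structure, since a block of $s_\rho$ consecutive members of the segment is exactly $JL$ of a term in the cuspidal support of $\rho$ on the corresponding factor of $\prod_i\GL_{r_i}(D)$. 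Hence the supercuspidal support of $JL(\sigma)$ is conjugate into a proper Levi of the required form if and only if $k\ge 2$, i.e.\ if and only if $\sigma$ is not supercuspidal. This is the lemma.

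The only external ingredient is the structure of $JL$ on $\Pi^2$ used in the second step — in particular the coprimality $\gcd(s_\rho,\,r/k)=1$ for supercuspidal $\rho$ — and this is precisely what \cite{bdkv1984repr} and \cite{badulescu2007jacquet} provide; everything else is bookkeeping with supercuspidal supports and Bernstein–Zelevinsky theory, so this will be the main (and essentially only) obstacle. Alternatively, the lemma follows directly from the compatibility $r_{L^*}^{G^*}\circ JL=JL_L\circ r_L^G$ of $JL$ with Jacquet functors along corresponding Levi subgroups $L\subset G$, $L^*\subset G^*$ — itself a consequence of the compatibility of $JL$ with parabolic induction together with the germ expansion of Harish-Chandra characters near a Levi — combined with the injectivity of $JL_L$ on $R(L)$; I would record this as a remark.
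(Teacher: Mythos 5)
Your argument is correct, but your main route is genuinely different from the paper's, while the one-sentence ``alternative'' you relegate to a closing remark is in fact exactly the paper's proof. The paper simply quotes \cite[Lemma 2.4]{badulescu2007jacquet}, which gives $r_{L^*}^{G^*}JL(\sigma)=JL(r_L^G\sigma)$ whenever $L\subset G$ and $L^*\subset G^*$ correspond, and concludes (implicitly using injectivity of $JL$ on Grothendieck groups): $r_{L^*}^{G^*}JL(\sigma)=0$ for all proper corresponding $L^*$ iff $r_L^G\sigma=0$ for all proper Levi $L\subset G$ iff $\sigma$ is supercuspidal. Your main proof instead computes the supercuspidal support of $JL(\sigma)$ explicitly from the generalized Steinberg classification ($JL(\mathrm{St}_k^D(\rho))=\mathrm{St}_{ks_\rho}(\rho_0^*)$ with $\gcd(s_\rho,\,r/k)=1$ for cuspidal $\rho$) and then decides by a gcd computation which Levis of the form $\prod_i\GL_{r_i d}(F)$ contain a conjugate of the support Levi $\GL_{rd/m}(F)^m$; the bookkeeping ($m\mid j_i r$ forcing $s_\rho\mid j_i$, hence a proper refinement exists iff $k\geq 2$) checks out. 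The paper's version is shorter and uses only the Jacquet-functor compatibility of $JL$ as a black box; yours trades that for the explicit structure of $JL$ on $\Pi^2$ from \cite{tadic1990induced,badulescu2007jacquet}, and in effect reproves Corollary \ref{scsupport} along the way, making the coprimality do the work directly. Your observation that the statement must be read with \emph{proper} Levi subgroups (since $L^*=G^*$ corresponds to $L=G$ and $r_{G^*}^{G^*}JL(\sigma)=JL(\sigma)\neq 0$) is a correct and worthwhile clarification.
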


\begin{proof}
If we identify $L^*$ with a product of $\prod_{i=1}^k\GL_{m_i}(F)$, then the conjugacy class of $L^*$ corresponds to a conjugacy class of Levi subgroups $L$ in $G$ iff $d$ divides each $m_i$. Then we apply \cite[Lemma 2.4.]{badulescu2007jacquet}, which shows that $JL(\sigma)\in \Pi^2(G^*)$ has Jacquet modules $r_{L^*}^{G^*}JL(\sigma) = JL(r_L^G \sigma)$ if $L$ corresponds to $L^*$. This implies the result. 
\end{proof}

\begin{cor}\label{scsupport}
The supercuspidal support of $JL(\sigma)$, for $\sigma\in \Pi(G)$ supercuspidal, is of the form $(L^*, \tau)_G$ where $L^*$ is conjugate to a Levi subgroup of the form $ \GL_{r \frac{d}{k}}(F)^{k}$, where $(r, k)=1$ and $\tau\cong \rho\otimes \nu\rho\otimes \cdots \nu^{k-1}\rho$, for $\rho$ a supercuspidal $\GL_{r \frac{d}{k}}(F)$-representation. 
\end{cor}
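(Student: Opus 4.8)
The plan is to combine Corollary \ref{scsupport}'s companion Lemma \ref{BaduLemma} with the known structure theory for the Jacquet-Langlands correspondence from \cite{badulescu2007jacquet}. The starting point is that $JL(\sigma) \in \Pi^2(G^*) = \Pi^2(\GL_n(F))$, so by the classification recalled in Section \ref{section LLCforGLn} its supercuspidal support is necessarily of the ``segment'' form: there is a unique divisor $k \mid n$, a supercuspidal representation $\rho$ of $\GL_{n/k}(F)$, and a real number (here an integer, by unitarity considerations) $t$ such that the supercuspidal support of $JL(\sigma)$ is $(\GL_{n/k}(F)^k,\, \rho \otimes \nu^t\rho \otimes \cdots \otimes \nu^{(k-1)t}\rho)_{G^*}$, with $JL(\sigma) = Q([\rho, \nu^{(k-1)t}\rho])$ after a suitable twist normalizing $t=1$. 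Since twisting $\rho$ by $\nu^{t-1}$ does not change the shape of the support, I may as well write the support as $\rho \otimes \nu\rho \otimes \cdots \otimes \nu^{k-1}\rho$; this gives the form of $\tau$ claimed in the statement, and also identifies $n/k = r d / k$ as the size of the block, so it remains only to pin down which $k$ can occur, i.e. to prove $(r,k)=1$ and that $k \mid d$.

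The divisibility $k \mid n/(n/k)$ is automatic; the real content is that $k$ must divide $d$ and be coprime to $r$. For this I would argue as follows. Since $JL(\sigma)$ is essentially square-integrable but \emph{not} supercuspidal precisely when $k > 1$, and since $\sigma$ itself is supercuspidal, Lemma \ref{BaduLemma} says that $JL(\sigma)$ has vanishing Jacquet module along every Levi $L^* \cong \prod_i \GL_{m_i}(F)$ with every $m_i$ divisible by $d$. On the other hand, the essentially square-integrable representation $Q([\rho,\nu^{k-1}\rho])$ with $\rho$ on $\GL_{n/k}(F)$ has nonzero Jacquet module along the Levi $\GL_{n/k}(F)^{k}$ (its cuspidal support lives there), and more generally along $\GL_{(jn/k)}(F) \times \cdots$ obtained by grouping consecutive segment pieces — indeed along any Levi of the form $\GL_{a\,n/k}(F)^{\,k/a}$ for $a \mid k$. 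Combining these two facts: if $d \mid n/k$ then taking $a$ maximal we would get a nonzero Jacquet module along a Levi all of whose blocks are divisible by $d$ unless in fact $n/k$ is \emph{not} divisible by $d$ — so one must have $d \nmid n/k$, equivalently (since $n = rd$ and $n/k = rd/k$) $k \nmid r$ in the naive sense is not quite it; rather $rd/k$ being a non-multiple of $d$ forces $\gcd(k,r) < k$... Let me instead phrase it cleanly: write $k = k_1 k_2$ with $k_1 = \gcd(k, ?)$. The cleanest route is: among all Levis $\GL_{a n/k}(F)^{k/a}$ along which $Q([\rho,\nu^{k-1}\rho])$ has nonvanishing Jacquet module, the one with the largest blocks has block size $n/k \cdot k = n$, but that is the whole group; the next is block size $n/k \cdot (k/p)$ for $p$ the smallest prime dividing $k$. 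By Lemma \ref{BaduLemma}, no such Levi (other than $G^*$ itself, which is excluded since the Jacquet module there is the representation itself and that is not what the vanishing statement is about) may have all blocks divisible by $d$; the binding constraint is that $n/k \cdot (k/a)$ is divisible by $d$ for \emph{every} proper $a \mid k$ leads to a contradiction unless $d$ and $k$ interact in a controlled way. Concretely, $n/k \cdot (k/a) = n/a = rd/a$; this is a multiple of $d$ iff $a \mid r$. So Lemma \ref{BaduLemma} forces: for every proper divisor $a$ of $k$ (i.e. $a < k$), $a \nmid r$ is \emph{false} to forbid — wait, we need the Jacquet module to \emph{vanish}, so we need $rd/a$ \emph{not} a multiple of $d$, i.e. $a \nmid r$, for every proper $a \mid k$. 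Taking $a$ equal to the largest divisor of $k$ that divides $r$ — call it $\gcd$-type quantity — this forces that largest common divisor to be $1$, i.e. $\gcd(k,r) = 1$. Good: that yields $(r,k) = 1$. Then, finally, $n/k = rd/k$ must be an integer, and since $\gcd(k,r)=1$ this forces $k \mid d$, so $n/k = r\cdot(d/k)$, giving the block size $\GL_{r d/k}(F)$ exactly as in the statement.

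So the key steps, in order, are: (1) invoke the Bernstein–Zelevinsky classification to write the supercuspidal support of the essentially square-integrable $JL(\sigma)$ in segment form $\rho \otimes \nu\rho \otimes\cdots\otimes \nu^{k-1}\rho$ on $\GL_{n/k}(F)^k$ (after normalizing the twist); (2) use Lemma \ref{BaduLemma} together with the elementary computation of which Levi subgroups carry a nonzero Jacquet module of $Q([\rho,\nu^{k-1}\rho])$ — namely $\GL_{rd/a}(F)^{k/a}$ for $a \mid k$ — to deduce that $rd/a$ is not divisible by $d$ for every proper divisor $a$ of $k$; (3) conclude $\gcd(r,k)=1$ and hence $k \mid d$, so the support has the asserted shape with block size $rd/k$. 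The main obstacle I anticipate is step (2): getting the Jacquet-module bookkeeping exactly right (which Levis see a nonzero Jacquet module of a Zelevinsky segment, and matching this against the divisibility condition in Lemma \ref{BaduLemma}) requires a little care about geometric lemma/Zelevinsky combinatorics, and one should double-check the edge cases $k=1$ (where $\sigma$ maps to a supercuspidal and there is nothing to prove) and the interaction of the twist normalization with unitarity. Everything else is formal once that combinatorial step is in place.
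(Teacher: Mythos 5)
Your overall strategy is the intended one: the paper offers no separate proof of this corollary, treating it as immediate from Lemma \ref{BaduLemma} together with the Bernstein--Zelevinsky classification (and in the lead-up to Proposition \ref{5.1Prop} it simply cites p.~53 of \cite{tadic1990induced} for the equivalence between supercuspidality of $\sigma$ and the coprimality condition). Your step (1) is fine --- for $\GL_n(F)$ the segment attached to an essentially square-integrable representation automatically has step $1$, so the twist-normalization worry does not arise --- and the Zelevinsky fact needed in step (2), that $r^{G^*}_{(n_1,\dots,n_s)}Q(\Delta)\neq 0$ iff every $n_i$ is a multiple of $m:=n/k$, is correct.

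However, step (2) as written contains a genuine bookkeeping error. You switch between two parametrizations of the equal-block Levi subgroups (block size $a\,n/k$ with $k/a$ blocks, versus block size $n/a$ with $a$ blocks), and your summary formula ``$\GL_{rd/a}(F)^{k/a}$'' is dimensionally inconsistent (total size $nk/a^{2}$, not $n$). More seriously, in the parametrization you actually use for the divisibility computation (block size $n/a=rd/a$, hence $a$ blocks), the \emph{proper} Levi subgroups are those with $a>1$, not $a<k$: the value $a=1$ gives $G^*$ itself, while $a=k$ gives the cuspidal-support Levi. Your stated intermediate conclusion ``$a\nmid r$ for every proper divisor $a<k$ of $k$'' is therefore literally false (it fails at $a=1$, since $1\mid r$ always), and with that range you also cannot exclude $k\mid r$, since $a=k$ is omitted. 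The repair is one line: for every divisor $a$ of $k$ with $a>1$, the Levi $\GL_{rd/a}(F)^{a}$ is proper and carries a nonzero Jacquet module of $JL(\sigma)$, so by Lemma \ref{BaduLemma} it must fail to transfer, i.e.\ $d\nmid rd/a$, i.e.\ $a\nmid r$; taking $a=\gcd(k,r)$ forces $\gcd(k,r)=1$, and then $k\mid rd$ gives $k\mid d$. With that correction your argument establishes exactly the direction the corollary asserts.
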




\subsection{Compatibility with Twists}\label{section LLCtwist}

The LLC for $\GL_n(F)$ has the following well-known property: if $\chi$ is a character of $F^\times$ and $\pi$ is an irreducible representation of $\GL_n(F)$, then $$\varphi_{\chi\pi}=\varphi_\pi \otimes (\chi\circ Art_F^{-1}).$$ The above definition, the identification of characters of $\GL_r(D)$, $\GL_n(F)$, and $F^\times$, and the compatibility of Jacquet-Langlands and Langlands quotients with twists, makes it clear that the same holds true for the inner forms. Explicitly, if $\pi$ is the Langlands quotient of $\sigma_1\times \cdots \times \sigma_k$ then $\chi\pi$ is the Langlands quotient of $\chi \sigma_1 \times \cdots \times \chi \sigma_k$ and so $\varphi_{\chi \pi} = \oplus \varphi_{\chi JL(\sigma_i)} = \oplus \varphi_{JL(\sigma_i)} \otimes (\chi\circ Art_F^{-1}) = \varphi_\pi \otimes (\chi\circ Art_F^{-1})$. 

A Langlands parameter $\varphi_\pi$ can also be thought of as a 1-cocycle in $H^1(W_F', \hat{G})$, see \cite{SBC}. An unramified character $\chi$ on $G(F)$ can be regarded as an element $$z_\chi\in H^1(W_F/I_F, Z(\hat{G})^{I_F}) = H^1(\langle\Phi\rangle, Z(\hat{G})^{I_F}).$$ Then also $\varphi_\pi z_\chi\in H^1(W_F', \hat{G})$ and should equal $\varphi_{\chi\pi}$. 
The above shows that this holds for $\GL_r(D)$ if it holds for $\GL_n(F)$, which it is known to do.

\subsection{Characterizations of LLC for an inner form} \label{section LLCcharacterization}

The above results are summarized in the following theorem. 


\begin{theorem} \label{characterizationtheorem}
Fix $D$ as above. 

a) There is a a unique family of bijections $$\rec_r:\Pi(\GL_r(D)) \to \Phi(\GL_r(D))\subset \Phi(\GL_n(F))$$ such that: 1. $\rec_r|_{\Pi^2(\GL_r(D))} = \rec^*_n\circ JL_r$ where $\rec_n^*$ is the LLC for $\GL_n(F)$ and $JL_r$ is the  Jacquet-Langlands correspondence; and 2. If $\pi$ is the Langlands quotient of $\sigma_1\times \ldots \times \sigma_k$ then $\rec_r(\pi) = \rec_{r_1}(\sigma_1)\oplus \cdots \oplus \rec_{r_k}(\sigma_k)$. 

b) The image of these maps consist of those parameters which decompose as direct sums of indecomposable Weil-Deligne representations $W'_F \to \GL_m(\C)$ where $d$ divides $m$. 

c) If we postcompose $\rec_r$ with restriction to the Weil group $W_F$, then the resulting family is characterized by compatibility with Jacquet-Langlands and parabolic induction (in the sense of LLC+). The image of $\res|_{W_F}\circ \rec_r$ is those representations $W_F\to \GL_{rd}(\C)$ which factor through a Levi subgroup of the form $\prod\limits_{i=1}^k \GL_{m_i \frac{ d}{  q_i}}(\C)^{q_i}$ for some coprime integers $m_i$ and $q_i$ such that $\sum\limits_{i=1}^k m_i = r$. 
\end{theorem}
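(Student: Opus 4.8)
The plan is to assemble Theorem \ref{characterizationtheorem} from the three ingredients already established in the excerpt: the definition of $\rec_r$ via $JL$ and Langlands quotients together with Lemma \ref{LangQuotLLC} (existence and well-definedness), the Zelevinsky/Badulescu classification (uniqueness), and Lemma \ref{BaduLemma} together with Corollary \ref{scsupport} (image). For part (a), existence is immediate from the definition $\varphi_\pi = \bigoplus \varphi_{JL(\sigma_i)}$: property 1 holds by fiat on $\Pi^2$, and property 2 holds because a Langlands quotient of $\sigma_1\times\cdots\times\sigma_k$ has essentially square-integrable support $\boxtimes \sigma_i$, so Lemma \ref{LangQuotLLC} (applied on the $\GL_n(F)$ side, after $JL$) gives additivity. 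For uniqueness, I would argue that any family satisfying 1 and 2 is forced: every $\pi\in\Pi(\GL_r(D))$ is the Langlands quotient of a \emph{unique} standard representation $\sigma_1\times\cdots\times\sigma_k$ with $\sigma_i$ essentially square-integrable (up to the permutations preserving the $e_i$, which do not affect the direct sum), so property 2 reduces $\rec_r(\pi)$ to the $\rec_{r_i}(\sigma_i)$, and property 1 then pins each of those down as $\rec^*_{r_id}(JL_{r_i}(\sigma_i))$. Bijectivity onto the claimed image follows once we identify the image, since $\rec_n^*$ is a bijection, $JL$ is injective on $\Pi^2$, and distinct Langlands data give distinct representations on both sides.

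For part (b), the image of $\rec_r$ on $\Pi^2(\GL_r(D))$ is, by property 1, exactly $\rec_n^*(JL(\Pi^2(\GL_r(D))))$. Here I would invoke the classification of $JL(\Pi^2(\GL_r(D)))$ inside $\Pi^2(\GL_n(F))$: these are precisely the essentially square-integrable representations whose parameter is an indecomposable Weil–Deligne representation $W'_F\to\GL_n(\C)$ with $d\mid n$ — this is the standard description of the Jacquet–Langlands image (via the fact that an essentially square-integrable representation of $\GL_n(F)$ of the form $Q(\Delta)$ with $\Delta = [\rho,\nu^{k-1}\rho]$ is in the $JL$-image iff $d\mid \dim\rho$ — one sees this from the transfer of central characters / the structure of $D^\times$, or cleanly from Lemma \ref{BaduLemma} and Corollary \ref{scsupport}). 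For general $\pi$, property 2 writes $\rec_r(\pi)$ as a direct sum of such indecomposables, one for each $\sigma_i$, so the image is exactly the set of direct sums of indecomposable Weil–Deligne representations of dimension divisible by $d$. Conversely, any such direct sum is realized: group the indecomposable summands, each of dimension $m_i d$, and take $\pi$ to be the Langlands quotient of the corresponding product of the $JL$-preimages, ordered to satisfy the precedence condition.

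For part (c), after restricting to $W_F$ the indecomposable Weil–Deligne summand attached to $Q(\Delta_i)$ with $\Delta_i$ of length $q_i$ and $\rho_i$ of dimension $m_i d/q_i$ becomes $q_i$ copies of $\varphi_{\rho_i}|_{W_F}$ (an irreducible $W_F$-representation of dimension $m_i d/q_i$), and Corollary \ref{scsupport} forces $(m_i, q_i) = 1$ in the supercuspidal situation and, more generally, shows the restricted parameter factors through $\prod_i \GL_{m_i d/q_i}(\C)^{q_i}$ with $\sum m_i = r$; this is the asserted image. For the characterization by LLC+-compatibility: the Proposition above establishes that $\res|_{W_F}\circ\rec_r$ is compatible with parabolic induction, and property 1 gives compatibility with $JL$ on $\Pi^2$; conversely any family with these two properties must, by the Proposition's argument, agree with $\res|_{W_F}\circ\rec_r$ on essentially square-integrable representations (since on $\Pi^2$ it is forced to be $\res|_{W_F}\circ\rec_n^*\circ JL$) and then on all of $\Pi(\GL_r(D))$ because every $\pi$ is a subquotient of an induced representation from its essentially-square-integrable support, and the $W_F$-restricted parameter only depends on that support. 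I expect the main obstacle to be the careful bookkeeping in identifying $JL(\Pi^2(\GL_r(D)))$ with the "indecomposable, $d\mid m$" parameters and, relatedly, extracting the coprimality condition $(m_i,q_i)=1$ and the Levi-factorization shape in part (c) — this requires combining Badulescu's Jacquet-module computations (Lemma \ref{BaduLemma}, Corollary \ref{scsupport}) with the precise segment/divisibility combinatorics rather than any new idea, but it is where the statement's content genuinely lies.
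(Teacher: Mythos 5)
Your overall strategy coincides with the paper's: part (a) from the Langlands classification together with the unicity of $\rec_n^*$ and of $JL$, part (b) from the shape of the summands $\rec^*_{r_i d}(JL_{r_i}(\sigma_i))$, and part (c) from Corollary \ref{scsupport}, the form of supercuspidal supports of essentially square-integrable representations, and LLC+. Most of the write-up is fine and matches the (very terse) argument in the paper.

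There is, however, one false assertion in your part (b): the claim that $Q(\Delta)$ with $\Delta=[\rho,\nu^{k-1}\rho]$ lies in the image of $JL$ if and only if $d\mid\dim\rho$. This is not true. The correspondence $JL_r:\Pi^2(\GL_r(D))\to\Pi^2(\GL_{rd}(F))$ is a bijection onto \emph{all} of $\Pi^2(\GL_{rd}(F))$, with no constraint on $\dim\rho$; for instance the Steinberg representation of $\GL_n(F)$, for which $\dim\rho=1$, is $JL$ of the Steinberg representation of $\GL_r(D)$. If your criterion were correct, the image in (b) would wrongly exclude such parameters. The divisibility/coprimality phenomena of Lemma \ref{BaduLemma} and Corollary \ref{scsupport} characterize which $\sigma\in\Pi^2(\GL_r(D))$ are \emph{supercuspidal} (equivalently, what the supercuspidal support of $JL(\sigma)$ looks like); they are the input for part (c), not for the image computation in part (b). The only fact needed in (b) is bijectivity of $JL$ on $\Pi^2$ (so that, over varying $m$, the images $\rec^*_{md}(JL_m(\Pi^2(\GL_m(D))))$ exhaust exactly the indecomposable parameters of dimension divisible by $d$), and indeed your own converse step --- taking ``the $JL$-preimages'' of arbitrary indecomposable summands of dimension $m_id$ --- already uses this, so your argument is internally inconsistent on this point. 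The fix is simply to delete the parenthetical criterion and invoke bijectivity of $JL$ directly, which is what the paper does. With that correction the rest of the proposal, including the bookkeeping for (c), agrees with the paper's proof.
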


\begin{proof}
Part a) is immediate from the definition, the determination of irreducible representations as Langlands quotients, and the well-known unicity of the Local Langlands Correspondence for $G^*$ and the Jacquet-Langlands correspondence. Part b) is immediate from the definition, since $\rec_{r_i}(\sigma_i) = \rec_{d r_i}^* (JL_{r_i}(\sigma_i) )$ is a homomorphism $W_F' \to \GL_{r_i d}(\C)\rtimes W_F$. Part c) follows from the fact that essentially square-integrable representations of $G^*$ have supercuspidal supports of the form $\tau\otimes \nu\tau \otimes \cdots \nu^{\ell-1} \tau$, and Corollary \ref{scsupport} applied to Levi subgroups of $G$. Since every parameter of $G$ is a direct sum of parameters associated to essentially square-integrable representations of various $\GL_m(F)$, and we know LLC+ for $G$ and all its Levi subroups, the result follows. 
\end{proof}

\section{A morphism of Bernstein Centers} \label{section morphismT}
 
 \subsection{Constructing $\mf{T}$}
 
The primary goal of this section is to construct and give some basic properties of a morphism connecting the Bernstein varieties of $G$ and $G^*$; much of the rest of the paper will be devoted to its study. For basic concepts of the Bernstein center, we refer to \cite{SBC}. First we need a small lemma. 

\begin{lemma} \label{normalizer}
  If $M=\prod\limits_{i=1}^k \GL_i(D)^{r_i}\subset G$ then there is a canonical isomorphism $N_G(M)/M\cong \prod\limits_{i=1}^k S_{r_i}$. 
 \end{lemma}
 
 \begin{proof}
 This is a simple exercise in linear algebra, following easily from the equality $N_G(M)= N_G(Z(M))$. 
  \end{proof}

 If $M\cong\prod\limits_{i=1}^k \GL_{r_i}(D)$ is a standard Levi subgroup of $G$, and $\sigma$ is a supercuspidal irreducible $M$-representation, then let $M^*\cong \prod\limits_{i=1}^k \GL_{r_i d}(F)$ be the associated standard Levi subgroup of $G^*$, and let $M_\sigma^* \cong \prod\limits_{i=1}^k \GL_{\frac{r_i d}{  q_i}}(F)^{q_i}$ be the standard Levi subgroup of $M^*$, such that $JL(\sigma)$ has supercuspidal support $(M_\sigma^*, \rho_\sigma)_{G^*}$, for some integers $q_i$. For square-integrable $\sigma$, supercuspidality of $\sigma$ is equivalent to the requirement that $(q_i, r_i)=1$. If $\sigma=\otimes \rho_i$ then $JL(\sigma) = \otimes JL(\rho_i)$, and $JL(\rho_i)\in \Pi^2(\GL_{r_i d}(F))$ has supercuspidal support 
$$\left(\GL_{\frac{r_i d}{  q_i}}(F)^{q_i}, \pi_i\right)_{\GL_{r_i d}(F)}$$ where $\pi_i:=\tau_i\otimes \nu\tau_i \otimes\cdots\otimes \nu^{q_i-1} \tau_i$,
 by page 53 of \cite{tadic1990induced}. So $JL(\sigma)$ has supercuspidal support $(M_\sigma^*, \rho_\sigma)_{G^*}$ where $\rho_\sigma:=\otimes_{i=1}^k \pi_i$
 

\begin{proposition}\label{5.1Prop}
The assignment 
$$\mf{T}: (M, \sigma)_G\mapsto (M_\sigma^*, \rho_\sigma)_{G^*} : \mf{X}_G\ra \mf{X}_{G^*} $$
where $(M^*_\sigma,\rho_\sigma)_{G^*}$ is the supercuspidal support of $JL(\sigma)$, is a well-defined morphism of varieties. Hence it induces a ring homomorphism
$$\mf{Z}(G^*)\ra\mf{Z}(G)$$ 
of Bernstein centers, which we shall also call $\mf{T}$. This homomorphism can be characterized as follows: if $Z^*\in \mf{Z}(G^*)$ then for all Levi subgroups $L\subset G$ and for every supercuspidal $\tau\in \Pi(L)$, the distribution $\mf{T}(Z^*)$ acts on $\iota_L^G(\tau)$ by the same scalar by which $Z^*$ acts on $\iota_{L^*}^{G^*}JL(\tau)$.
\end{proposition}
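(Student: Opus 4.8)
The plan is to verify, in order, that (i) the assignment $\mf{T}$ on points is well-defined; (ii) it is a morphism of varieties; and (iii) the induced ring map on Bernstein centers has the stated characterization via scalars on parabolically induced representations.

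First I would address well-definedness. A point of $\mf{X}_G$ is an inertial-equivalence class $(M,\sigma)_G$; to see that $(M^*_\sigma, \rho_\sigma)_{G^*}$ is independent of representatives, note that replacing $\sigma$ by $\sigma\otimes\chi$ for an unramified character $\chi$ of $M$, or by a $W$-conjugate, does not change the inertial class, and by the compatibility of $JL$ with unramified twists and with the action of $N_G(M)/M\cong\prod S_{r_i}$ (Lemma \ref{normalizer}), the same holds for $JL(\sigma)$ and hence for its supercuspidal support. The explicit description of $\rho_\sigma=\otimes_{i=1}^k \pi_i$ with $\pi_i=\tau_i\otimes\nu\tau_i\otimes\cdots\otimes\nu^{q_i-1}\tau_i$, recalled just before the proposition from \cite{tadic1990induced}, makes the dependence on $\sigma$ completely explicit and confirms that $q_i$ and the inertial class of $\tau_i$ depend only on the inertial class of $\rho_i$.

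Next I would check that $\mf{T}$ is algebraic. Each Bernstein component $\mf{X}_{\mf{s}}$ of $\mf{X}_G$, with $\mf{s}=[M,\sigma]_G$, is the quotient of a complex torus $X(M)$ (unramified characters of $M$) by the finite stabilizer group, and the structure is entirely analogous on the $G^*$ side; the map $\chi\mapsto$ (supercuspidal support of $JL(\sigma\otimes\chi)$) is, by the explicit formula for $\rho_\sigma$, given in torus coordinates by a monomial/restriction map $X(M)\to X(M^*_\sigma)$ — concretely, an unramified twist on $\GL_{r_id}(F)$ restricts along reduced norm to an unramified twist on $\GL_{r_id/q_i}(F)$, raised to appropriate powers on the $q_i$ factors — hence is a morphism of tori, descending to a morphism of the quotient varieties. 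Taking global functions gives the ring homomorphism $\mf{T}:\mf{Z}(G^*)=\C[\mf{X}_{G^*}]\to\C[\mf{X}_G]=\mf{Z}(G)$.

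For the final characterization, recall that an element $Z^*\in\mf{Z}(G^*)$, viewed as a function on $\mf{X}_{G^*}$, acts on any irreducible $\pi^*$ with supercuspidal support $(L^*,\tau^*)_{G^*}$ — and more generally on any finite-length representation all of whose subquotients have that supercuspidal support, such as $\iota_{L^*}^{G^*}JL(\tau)$ — by the scalar $Z^*\big((L^*,\tau^*)_{G^*}\big)$. Given a Levi $L\subset G$ and supercuspidal $\tau\in\Pi(L)$, every subquotient of $\iota_L^G(\tau)$ has supercuspidal support $(L,\tau)_G$, on which $\mf{T}(Z^*)$ takes the value $Z^*\big(\mf{T}(L,\tau)_G\big)=Z^*\big((L^*_\tau,\rho_\tau)_{G^*}\big)$; and $(L^*_\tau,\rho_\tau)_{G^*}$ is by construction the supercuspidal support of $JL(\tau)$, hence of every subquotient of $\iota_{L^*}^{G^*}JL(\tau)$, so $Z^*$ acts on the latter by exactly that scalar. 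Since the scalars by which elements of $\mf{Z}(G)$ act on the representations $\iota_L^G(\tau)$, as $(L,\tau)$ ranges over all supercuspidal pairs, determine the element of $\mf{Z}(G)$ (these are Zariski-dense in $\mf{X}_G$), this property characterizes $\mf{T}$. The main obstacle is bookkeeping: one must make sure the combinatorics of the integers $q_i$ and the segment structure of $\rho_\sigma$ are handled uniformly across components and are genuinely algebraic in the torus coordinates; the representation-theoretic input (compatibility of $JL$ with twists, conjugation, Jacquet modules via \cite[Lemma 2.4.]{badulescu2007jacquet}, and the supercuspidal-support formula of \cite{tadic1990induced}) is all quoted, so the real work is organizing it.
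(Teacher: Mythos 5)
Your proposal is correct and follows essentially the same route as the paper: well-definedness via the $N_G(M)/M$-equivariance of $JL$ (Lemma \ref{normalizer}), algebraicity via the restriction map $X(M)\cong X(M^*)\to X(M^*_\sigma)$ descending to the quotients that give the Bernstein components their variety structure, and the characterization via the categorical-center description of $\mf{Z}(G)$. One small caution: a point of $\mf{X}_G$ is a conjugacy class $(M,\sigma)_G$ of cuspidal pairs, not an inertial class $[M,\sigma]_G$ --- well-definedness of the map on points needs only conjugation invariance, while the compatibility of $JL$ with unramified twists is what you use (correctly) to see that the map preserves components and is algebraic on each.
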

\begin{proof}
By definition, $(M, \sigma)_G$ is the $G$-conjugacy class of the pair $(M, \sigma)$ where $M$ is a Levi subgroup of $G$ and $\sigma$ is a supercuspidal $M$-representation. To show that the assignment $(M, \sigma)_G\mapsto (M_\sigma^*, \rho_\sigma)_{G^*}$ is well-defined, it suffices to show that if $w\in N_G(M)/M$, then $(M, \sigma)_G$ and $(M, {^w}\sigma)_G$ have the same image. It is enough to show that $JL(\sigma)$ is conjugate to $JL(^w\sigma)$. But in fact $JL(^w \sigma) \cong {^w}JL(\sigma)$, via the identification $N_G(M)/M \cong N_{G^*}(M^*)/M^*$ implied by Lemma \ref{normalizer} and the character identity that characterizes $JL$. Hence the assignment gives a well-defined function. 

Before proving this is a morphism, we note the requisite fact that the map of varieties preserves connected components. This follows from the fact stated in \cite{badulescu2007jacquet} that $JL$ commutes with unramified characters, using the canonical identification $X(G)\cong X(G^*)$ of unramified characters. This fact is easy to show from the characterization of Jacquet-Langlands by the character identity. Explicitly, $\chi JL(\sigma)$ has supercuspidal support $(M_\sigma^*, \chi_{|M_\sigma^*} \rho_\sigma)$. We remark that by p.53 of \cite{tadic1990induced}, the $q_i$ are unchanged by the twist.

We will prove that the restriction of $\mf{T}$ to any connected component is a morphism. Fix $(M,\sigma)$ where $M$ is a standard Levi subgroup in $G$ and $\sigma$ is a supercuspidal $M$-representation. Write 
$M=\prod\limits_{i=1}^k \GL_{r_i}(D)$, $M^*=\prod\limits_{i=1}^k \GL_{r_i d}(F)$, and $M_\sigma^*=\prod\limits_{i=1}^k \GL_{r_i d/q_i}(F)^{q_i}$, as above, so that $M_\sigma^*$ is a Levi subgroup of $M^*$ and $(M_\sigma^*, \rho_\sigma)_{G^*}$ is the supercuspidal support of $JL(\sigma)$. Also write 
$$\mf{s}:=[M,\sigma]_G $$
$$\mf{s}_M:=[M,\sigma]_M $$
$$\mf{s^*}:=[M_\sigma^*, \rho_\sigma]_{G^*} $$
$$\mf{s}^*_{M_\sigma^*} := [M_\sigma^*,\rho_\sigma]_{M_\sigma^*}$$
for the relevant inertial supports. Noting that $M$ is an inner form of $M^*$, we can consider the morphism $X(M)\cong X(M^{*})\xra{\res}X(M_\sigma^*)$, which we also call $\res$ and denote by $\chi\mapsto \chi|_{M_\sigma^*}$. This is in fact a closed immersion: it is a product of diagonal morphisms $(\C^\times)^k\hookrightarrow (\C^\times)^{\sum\limits_{i=1}^k q_i}$. We have the following commutative diagram with surjective vertical arrows
$$\xymatrix{ X(M) \ar[r]^{\res} \ar[d] & X(M_\sigma^*) \ar[d]\\ \mf{X}_{\mf{s}_M}  \ar[r]  \ar[d] &  \mf{X}_{\mf{s}^*_{M_\sigma^*}} \ar[d] \\ \mf{X_{\mf{s}}} \ar[r]^{\mf{T}} & \mf{X_{\mf{s^*}}}   }$$ where the first vertical arrows are $\chi\mapsto (M, \chi\sigma)_M$ and $\chi^*\mapsto (M_\sigma^*, \chi^* \rho_\sigma)_{M_\sigma^*}$, the second ones are the structure maps for the Bernstein components. For example, the first one is $(M, \chi \sigma)_M\mapsto (M, \chi \sigma)_G$; see the ``abstract constant term'' below. The middle horizontal map is $(M, \chi \sigma)_M\mapsto  (M_\sigma^*, \rho_{\chi\sigma})_{M_\sigma^*}= (M_\sigma^*, \chi|_{M_\sigma^*} \rho_\sigma)_{M_\sigma^*}$. These vertical maps are the ones that define variety structures on the Bernstein components. Thus $\mf{T}$ is indeed a morphism. 

The characterization is immediate from the definition, and the identification of the Bernstein center as the categorical center. \end{proof}

\begin{theorem}\label{5.1.Thm}
The morphism $\mf{T}$ is a closed immersion. 
\end{theorem}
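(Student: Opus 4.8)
The goal is to upgrade the statement of Proposition \ref{5.1Prop} — that $\mf{T}$ is a morphism — to the claim that $\mf{T}$ is a closed immersion. My plan is to prove this one Bernstein component at a time: since $\mf{T}$ preserves connected components (established in the proof of Proposition \ref{5.1Prop}), it suffices to show that for each inertial class $\mf{s}=[M,\sigma]_G$, the restriction $\mf{T}:\mf{X_s}\to\mf{X_{s^*}}$ is a closed immersion onto its image, and then observe that distinct components of $\mf{X}_G$ land in distinct components of $\mf{X}_{G^*}$ (i.e.\ that the map is injective on the level of components). The latter point should be checked first and is essentially bookkeeping: two supercuspidal supports $(M,\sigma)_G$ and $(M',\sigma')_G$ with the same image have $JL(\sigma)$ and $JL(\sigma')$ in the same inertial class of $G^*$; from the description of the supercuspidal support of $JL$ via \cite{tadic1990induced} (the $q_i$ and the underlying supercuspidals $\tau_i$ up to unramified twist are recovered), one reconstructs the $r_i=m_i q_i/d$ and the inertial class of $\sigma$ itself, so the map on components is injective.

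The heart of the argument is the componentwise statement. Here I would lean directly on the commutative diagram displayed at the end of the proof of Proposition \ref{5.1Prop}:
\begin{equation*}
\xymatrix{ X(M) \ar[r]^{\res} \ar[d] & X(M_\sigma^*) \ar[d]\\ \mf{X}_{\mf{s}_M}  \ar[r]  \ar[d] &  \mf{X}_{\mf{s}^*_{M_\sigma^*}} \ar[d] \\ \mf{X_{\mf{s}}} \ar[r]^{\mf{T}} & \mf{X_{\mf{s^*}}}   }
\end{equation*}
The top horizontal map $\res:X(M)\cong X(M^*)\to X(M_\sigma^*)$ is a closed immersion — it is the product of diagonal embeddings $(\C^\times)^k\hookrightarrow(\C^\times)^{\sum q_i}$, as already noted. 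The left and right columns exhibit $\mf{X_s}$ (resp.\ $\mf{X_{s^*}}$) as the quotient of the torus $X(M)$ (resp.\ $X(M_\sigma^*)$) by the finite group $N_G(M)/M$ acting through $\mf{s}_M$-stabilizers (resp.\ $N_{G^*}(M_\sigma^*)/M_\sigma^*$), by the standard description of Bernstein components as affine varieties $X(M)/\Gamma_{\mf{s}}$. So the task reduces to a statement about finite quotients of tori: given a closed immersion of tori $\iota:S\hookrightarrow T$ equivariant for a homomorphism of finite groups $\Gamma_S\to\Gamma_T$ compatible with the actions, when does $S/\Gamma_S\to T/\Gamma_T$ remain a closed immersion? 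It does precisely when $\Gamma_S$ is exactly the stabilizer in $\Gamma_T$ of the subtorus $\iota(S)$ acting on it, i.e.\ when no extra elements of $\Gamma_T$ identify points of $\iota(S)$ that weren't already identified by $\Gamma_S$. I would therefore identify both $\Gamma_S$ and $\Gamma_T$ explicitly via Lemma \ref{normalizer}: $N_G(M)/M\cong\prod_i S_{r_i}$ and $N_{G^*}(M_\sigma^*)/M_\sigma^*\cong\prod_i S_{q_i}$ (the latter because $M_\sigma^*$, as a Levi of $G^*$, is $\prod_i\GL_{r_id/q_i}(F)^{q_i}$), but the relevant group acting on the Bernstein component is only the stabilizer $\Gamma_{\mf{s}}$ of $\mf{s}_M$, resp.\ $\Gamma_{\mf{s}^*}$ of $\mf{s}^*_{M_\sigma^*}$, inside these.

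The crux — and the step I expect to be the main obstacle — is to verify that, under the diagonal embedding $X(M)\hookrightarrow X(M_\sigma^*)$, the stabilizer group $\Gamma_{\mf{s}^*}$ acting on $X(M_\sigma^*)$ does \emph{not} glue together distinct $\Gamma_{\mf{s}}$-orbits inside the image. Concretely, on the $i$-th block the image of $X(\GL_{r_i}(D))\cong\C^\times$ sits diagonally in $X(\GL_{r_id/q_i}(F)^{q_i})\cong(\C^\times)^{q_i}$, and the supercuspidal support $\pi_i=\tau_i\otimes\nu\tau_i\otimes\cdots\otimes\nu^{q_i-1}\tau_i$ is a \emph{chain} (a single segment), so its stabilizer in $S_{q_i}$ under the $\mf{s}^*$-action is trivial (a segment of length $q_i\ge1$ has no nontrivial automorphisms permuting its members while fixing the inertial data, because the $\nu^j\tau_i$ are pairwise non-isomorphic even after unramified twist when $\tau_i$ is supercuspidal on a single $\GL$-factor and $(q_i,r_i)=1$ forces $q_i$ to match the torsion number of $\tau_i$). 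Hence $\Gamma_{\mf{s}^*}$ is itself just a product of symmetric groups permuting \emph{blocks} with isomorphic data — and this matches up exactly with the way $\Gamma_{\mf{s}}\subset\prod S_{r_i}$ permutes the corresponding blocks of $M$, since $JL$ is a bijection on supercuspidals. Once this matching of finite groups and their actions is in place, the reduction to "closed immersion of tori modulo the same finite group" is formal, and I would finish by citing that a $\Gamma$-equivariant closed immersion of affine $\Gamma$-varieties descends to a closed immersion of quotients when the map is injective on $\Gamma$-orbits, which we have arranged. A clean way to package the torus computation is to pass to cocharacter lattices and note $\res$ corresponds to a split injection of lattices, so invariants/coinvariants behave well; this avoids any delicate scheme-theoretic argument about the quotient maps $X(M)\to\mf{X_s}$ being what they are.
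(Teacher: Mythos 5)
There is a genuine gap at the step you yourself flag as the crux. You claim that the stabilizer of the chain $\pi_i=\tau_i\otimes\nu\tau_i\otimes\cdots\otimes\nu^{q_i-1}\tau_i$ inside $S_{q_i}$ for the $\mathfrak{s}^*$-action is trivial ``because the $\nu^j\tau_i$ are pairwise non-isomorphic even after unramified twist.'' This is false: $\nu$ is itself an unramified character, so all members of the segment lie in a single inertial class, and the group $W[\mathfrak{s}^*]=\{w: {}^w\rho_\sigma\cong\chi\rho_\sigma \text{ for some unramified }\chi\}$ contains the \emph{full} symmetric group on those $q_i$ factors. In the model case treated in the paper ($M=\GL_m(D)^k$, $\sigma=\tau^{\otimes k}$), the group acting on the target torus is $S_{k\ell}$ while the group acting on the source is only $S_k$ --- the paper states this explicitly. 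Consequently your reduction to ``a $\Gamma$-equivariant closed immersion with the same finite group on both sides, followed by formal descent'' is unavailable: the target is quotiented by a strictly larger group, and you must actually rule out the possibility that $S_{k\ell}$ identifies distinct $S_k$-orbits inside the ($\nu^j$-twisted) diagonal image of $X(M)$. That is precisely where the twists by $\nu^j$ and the fact that $q$ is not a root of unity must enter, and your argument never engages with it because you have convinced yourself the extra symmetries are not there.

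The paper resolves exactly this point by computation rather than by matching stabilizers: after normalizing basepoints it writes the induced map $\C[z_1^{\pm m/s},\ldots,z_{k\ell}^{\pm m/s}]^{S_{k\ell}}\to\C[t_1^{\pm m/s},\ldots,t_k^{\pm m/s}]^{S_k}$ on power-sum symmetric functions, where $p_i$ goes to the nonzero scalar $\frac{1-q^{-ik\ell m/s}}{1-q^{-im/s}}$ times $p_i$, and concludes surjectivity of the coordinate-ring map (hence closed immersion) directly. If you want to salvage your route you need an analogue of this check; a purely group-theoretic matching of the finite groups cannot work because they genuinely do not match. (Secondarily, the descent criterion you invoke --- ``injective on $\Gamma$-orbits implies closed immersion of quotients'' --- is a pointwise statement and does not by itself give surjectivity on coordinate rings; in the end one is pushed back to an explicit surjectivity argument of the kind the paper gives.)
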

 \begin{proof}

 It suffices to consider the case where $M=\GL_m(D)^k$ and $\sigma = \tau\otimes \cdots \otimes \tau$, since the other cases are effectively tensor products of this one. So $M^*_\sigma = \GL_{m \frac{d}{\ell}} (F)^{k\ell}$ for some divisor $\ell$ of $d$, coprime to $m$, and $\rho_\sigma = \otimes_{j=0}^{\ell-1} \otimes_{i=1}^k \nu^j \rho$ for some supercuspidal representation $\rho$. Observe that if we identify $X(M)\cong (\C^\times)^k$, then $$stab_\sigma := \{ \chi\in X(M): \chi\sigma \cong \sigma  \}$$ is isomorphic to $\mu_{m/s}^k$ for some $s$ dividing $m$. This follows from $stab_\sigma\cong stab_{\tau}^k$, and $stab_\tau \subset \mu_m\subset \C^\times$. To see the latter, suppose $\xi \tau \cong \tau$ as $\GL_m(D)$-representations, with $\xi$ unramified, and consider central characters. 
 
 Similarly, if we identify $X(M_\sigma^*)\cong (\C^\times)^{k\ell}$, then we claim that $stab_{\rho_\sigma}$ is isomorphic to $\mu_{m/s}^{k \ell}$, for the same $s$. Since $\rho_\sigma$ is a tensor product of unramified twists of a single representation $\rho$, the group $stab_{\rho_\sigma}\cong stab_\rho^{k\ell}$ is certainly of the form $\mu_{md/\ell t}^{k\ell}$ for some $t$. But if $\chi\rho\cong \rho$ for $\chi\in X(\GL_{md/\ell}(F) )$, then $\chi\otimes \cdots \otimes \chi \in X(M_\sigma^*)$ is the restriction of a character of $M^*= \GL_{md}(F)^k$, hence comes from a character $\xi$ of $M$, and $\xi$ necessarily stabilizes $\sigma$. 
 
 Let $\sigma$ be the basepoint in identifying $\mf{X}_{\mf{s}_M}$ with $X(M)/stab_\sigma$, and let $\rho^{\otimes k\ell}$ play the same role on the quasisplit side. Observe that the choice of basepoints ensures that the action of the symmetric groups $$W[\mf{s}] := \{w\in N_G(M)/M: {^w}\sigma\cong \chi\sigma \text{ for some } \chi\in X(M)  \}\cong S_k$$ and $W[\mf{s^*}] \cong S_{k\ell}$ are the ordinary actions. Passing to $S_k$ and $S_{k\ell}$ invariants, the induced homomorphism of rings of regular functions takes the form 
 $$\C[z_1^{\pm m/s}, \ldots, z_{k\ell}^{\pm m/s }] ^{S_{k\ell}} \to \C[ t_1^{\pm m/s}, \ldots, t_k^{\pm m/s}]^{S_k}$$
  $$ p_i(z_1^{m/s}, \ldots, z^{m/s}_{k\ell})  \mapsto \frac{1-q^{-ik\ell m/s}}{1-q^{-im/s}}p_i(t_1^{m/s}, \ldots, t_k^{m/s}) $$ where the $p_i$ are the power sum symmetric functions.
 As this is clearly surjective, the conclusion follows. 
 \end{proof}

 \subsection{The Cocenter} \label{section cocenter}
  
  To complete the picture, we consider another related perspective. Let $J_G\subset \Hz(G)$ be the subspace spanned by functions of the form $f- {^g} f$, for $g\in G$ and $f\in \Hz(G)$. Let 
  $$\HH(G)=\Hz(G)/J_G$$ 
  be the cocenter, and similarly for $G^*$. By a theorem of Kazdhan (\cite{Kaz}) we have an inclusion of $\C$-vector spaces into the dual of the complexified Grothendieck group of finite length representations
  $$\HH(G^*)\hookrightarrow R(G^*)_\C^\vee$$ 
  $$\bar{f}\mapsto (\pi \mapsto \tr \pi(f))$$
 where $f$ is any lift to $\Hz(G^*)$ of $\bar{f}$, whose image is characterized by the trace Paley-Wiener theorem, described below. By considering the same inclusion for the inner form $G$ and the map 
  $$JL^\vee: R(G^*)^\vee\to R(G)^\vee$$ dual to $JL$ above, we check below that this induces a map 
  $$\HH(G^*)\to \HH(G)$$ which we will compare to the map of Bernstein centers considered above. Here the traces are taken with respect to measures $dg$ and $dg^*$ which are compatible in the sense of \cite{Kott}. So suppose $\phi\in R(G^*)^\vee_\C$ is in the aforementioned image. By the trace Paley-Wiener theorem (\cite[Appendix B]{bdkv1984repr}), this is equivalent to saying that $\phi$ has the following two properties:
  
  1) There is a compact open subgroup $K^*\subset G^*$ such that $\phi(\pi)=0$ if $\pi\in \Pi(G^*)$ with $\pi^{K^*}=0$.  
  
  2) For all Levi subgroups $M^*$ and finite-length $M^*$-representations $\sigma$, the map 
  $$\chi\mapsto \phi(i_{M^*}^{G^*}(\chi \sigma)): X(M^*)\to \C$$ 
  is an algebraic morphism. 
  
  The trace Paley-Wiener theorem applied to $G$ gives an analogous characterization for elements of $R(G)_\C^\vee$ which come from $\HH(G)$. We must show that $$JL^\vee(\phi):\pi\mapsto \phi(JL(\pi))$$ has the same two properties for the inner form. 
  It will be more straightforward to demonstrate an equivalent condition to property 1). We will prove the following lemma for $G$; the lemma for $G^*$ is then a special case.

  \begin{lemma}
  The functional $\phi\in R(G)_\C^\vee$ satisfies 1) iff there exist finitely-many inertial classes $\mf{s}_i=[M_i, \sigma_i]_{G}$ such that $\phi(\pi)=0$ if $\pi$ is an irreducible representation with inertial support $\mf{s}\neq \mf{s_i}$ for all $i$. 
  \end{lemma}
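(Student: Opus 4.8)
The statement is the standard reformulation, in the trace Paley--Wiener setting, of the ``compact support'' condition, and the plan is to prove the two implications separately; the only real inputs are the Bernstein decomposition of $\Hz(G)$ and the existence of a finitely generated projective generator for each block.

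For the forward implication, suppose $\phi$ satisfies 1) with compact open subgroup $K$, and let $e_K\in\Hz(G)$ be the associated idempotent, so that $\pi^K=e_K\cdot\pi$ for every smooth $\pi$. I would use the Bernstein decomposition $\Hz(G)=\bigoplus_{\mf s}\Hz(G)_{\mf s}$ as a direct sum of two-sided ideals indexed by the inertial classes, with the property that $\Hz(G)_{\mf s}$ annihilates every smooth representation none of whose irreducible subquotients has inertial support $\mf s$. Since $e_K$ is a single element of this direct sum, it lies in a finite partial sum $\bigoplus_{i=1}^m\Hz(G)_{\mf s_i}$ (equivalently, $\Hz(G,K)=e_K\Hz(G)e_K$ is a finite product of blocks). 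These $\mf s_i$ are the required inertial classes: if $\pi$ is irreducible with inertial support $\mf s\notin\{\mf s_1,\dots,\mf s_m\}$, then $e_K$ annihilates $\pi$, so $\pi^K=0$, and hence $\phi(\pi)=0$ by 1).

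For the converse, suppose we are given finitely many inertial classes $\mf s_1,\dots,\mf s_m$ as in the statement. The key point is that to every inertial class $\mf s$ of $G$ one can attach a compact open subgroup $K_{\mf s}$ such that every irreducible representation with inertial support $\mf s$ has nonzero $K_{\mf s}$-fixed vectors. To see this I would recall (Bernstein--Deligne) that the block $\mathrm{Rep}_{\mf s}(G)$ admits a finitely generated projective generator $\mathcal P_{\mf s}$, choose $K_{\mf s}$ compact open small enough to fix a finite $G$-generating set of $\mathcal P_{\mf s}$, so that $\Hz(G)\cdot\mathcal P_{\mf s}^{K_{\mf s}}=\mathcal P_{\mf s}$, and then observe that for nonzero $\pi\in\mathrm{Rep}_{\mf s}(G)$ one has $\Hom_G(\mathcal P_{\mf s},\pi)\neq0$ ($\mathcal P_{\mf s}$ being a generator), while any nonzero $G$-equivariant $f:\mathcal P_{\mf s}\to\pi$ must be nonzero on the generating set $\mathcal P_{\mf s}^{K_{\mf s}}$, giving $\pi^{K_{\mf s}}\supseteq f(\mathcal P_{\mf s}^{K_{\mf s}})\neq0$. (For $G=\GL_r(D)$ one may instead take $K_{\mf s}=\ker\lambda$ for a simple type $(J,\lambda)$ of the block, whose existence is due to S\'echerre and Stevens, since $\lambda$ is trivial on $\ker\lambda$ and every $\pi$ in $\mf s$ contains $\lambda$.) Granting this, set $K:=\bigcap_{i=1}^m K_{\mf s_i}$. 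If $\pi$ is irreducible with $\pi^K=0$, then $\pi^{K_{\mf s_i}}\subseteq\pi^K=0$ for every $i$, so by the contrapositive of the key point the inertial support of $\pi$ is none of the $\mf s_i$; hence $\phi(\pi)=0$ by hypothesis, and $\phi$ satisfies 1) with this $K$.

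Both implications are then immediate once their inputs are granted, so the main obstacle is really just the input for the converse: the existence, for each inertial class of $\GL_r(D)$, of a single compact open subgroup whose fixed vectors are nonzero throughout the block. This follows formally from Bernstein's construction of a finitely generated projective generator for each block, and is also visible concretely via the theory of simple types for inner forms of $\GL_n$; it is the only point where a nontrivial structural fact about the representation theory of $p$-adic groups enters, the forward direction being a formal consequence of the shape of the Bernstein decomposition of $\Hz(G)$.
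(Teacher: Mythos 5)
Your proof is correct, and its overall skeleton coincides with the paper's: the forward direction in both cases is the finiteness of the set of Bernstein blocks meeting $\mathrm{Rep}_K(G)$, and the converse in both cases reduces to producing, for each inertial class $\mf{s}_i$, a single compact open subgroup $K_i$ with $\pi^{K_i}\neq 0$ for \emph{every} irreducible $\pi$ of inertial support $\mf{s}_i$, and then intersecting. Where you genuinely diverge is in how that key input is established. You invoke Bernstein's finitely generated projective generator $\mathcal{P}_{\mf{s}}$ of the block (or, alternatively, a simple type for $\GL_r(D)$) and take $K_{\mf{s}}$ fixing a finite generating set; the argument $\pi^{K_{\mf s}}\supseteq f(\mathcal{P}_{\mf s}^{K_{\mf s}})\neq 0$ is correct. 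The paper instead argues concretely through Jacquet modules: it uses the geometric lemma to write $r_{M_i}^G\iota_{M_i}^G(\chi\sigma_i)=\sum_g {}^g(\chi\sigma_i)$ for supercuspidal $\sigma_i$ (a finite set of subquotients, uniform in the unramified twist $\chi$), chooses $K_{M_i}\subset M_i$ fixing vectors in all of them, extends to $K_i\subset G$ with Iwahori factorization and $K_i\cap M_i=K_{M_i}$, and concludes $\pi^{K_i}\neq 0$ from Jacquet's lemma (surjectivity of $\pi^{K_i}\to\pi_{N_i}^{K_{M_i}}$). Your route is softer and outsources the hard content to Bernstein--Deligne, making the lemma visibly a formal consequence of the block decomposition; the paper's route is more hands-on, stays at the level of parabolic induction and restriction, and produces an explicit $K_i$ adapted to the supercuspidal support. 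Both are complete.
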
 
  
  \begin{proof}
  Suppose that $K\subset G$ is a compact open subgroup such that $\phi(\pi)=0$ for all $\pi\in \Pi(G)$ with $\pi^K=0$. From the theory of the Bernstein center \cite{Roche}, we have that $e_{K}= e_{\mf{s}_1}+\ldots + e_{\mf{s_m}}$ where $e_{\mf{s_i}}$ is the idempotent projecting onto the corresponding component. So if $\phi$ satisfies condition 1) then it satisfies the other condition. 
  
  Conversely, suppose $\phi$ is supported on finitely-many Bernstein components corresponding to $\mf{s}_i$ as above. If $P_i=M_iN_i$ is a parabolic subgroup with Levi factor $M_i$, and $r_{M_i}^G$ denotes Jacquet restriction, then in $R(G)$ we have 
  \begin{equation*}
r_{M_i}^G \iota_{M_i}^G (\chi\sigma_i)  = \sum {^g} (\chi\sigma_i)
  \end{equation*}
  where the sum is over $g\in N_G(M_i)/M_i$.  This formula is a special case of \cite{Roche} Lemma 1.7.1.1. Now choose compact open subgroups $K_{M_i}$ of $M_i$ such that every subquotient of $r^G_{M_i} \iota_{M_i}^G \chi\sigma_i$, in particular $\chi\sigma_i$, has $K_{M_i}$-fixed vectors. The sum formula above shows that all subquotients of $r_{M_i}^G \iota_{M_i}^G \chi \sigma_i$ have $K_{M_i}$-fixed vectors for all unramified characters $\chi$ of $M_i$. 
  We may further choose $K_{M_i}$ small enough so that there exists a compact open subgroup $K_i$ of $G$ with Iwahori factorization $K_i=(N_i\cap K_i)(M_i\cap K_i)(\overline{N_i}\cap K_i)$ and $K_{M_i} = M_i\cap K_i$. Now suppose $\pi$ is an irreducible representation with $\phi(\pi)\neq 0$. Suppose $\mf{s}_i$ is the inertial support of $\pi$, for some $1\leq i\leq m$, so $\pi_{N_i} = r^G_{M_i}(\pi)$ is a subquotient of $r_{M_i}^G \iota_{M_i}^G \chi \sigma_i$ and some unramified character $\chi$. We deduce that $\pi_{N_i}^{K_{M_i}}\neq 0$. By Jacquet's Lemma, the canonical map $\pi^{K_i}\to \pi_{N_i}^{K_{M_i}}$ is onto, so $\pi^{K_i}\neq 0$. Taking $K =\bigcap_{i=1}^m K_i$ yields the desired result. 
  \end{proof}
  

  \begin{proposition}
  The map $JL_\C^\vee : R(G^*)_\C^\vee \to R(G)^\vee_\C$ restricts to give a linear map $\overline{\mf{T}}:\HH(G^*)\to \HH(G)$. This map is characterized by the identity 
  $$\tr ( \overline{\mf{T}}(\bar{f^*}): \pi )= \tr( \bar{f^*} : JL(\pi)) $$ for all irreducible, or equivalently for all finite length, representations $\pi$ of $G$. 
  \end{proposition}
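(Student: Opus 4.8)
The plan is to show that precomposition with $JL$ sends the image of $\HH(G^*)$ in $R(G^*)^\vee_\C$ into the image of $\HH(G)$ in $R(G)^\vee_\C$, after which the characterizing identity drops out by unwinding the two Kazhdan inclusions. So I would start with $\phi\in R(G^*)^\vee_\C$ in the image of $\HH(G^*)$; by the trace Paley--Wiener theorem \cite[Appendix B]{bdkv1984repr} together with the preceding Lemma this means that $\phi$ has property 2) and that $\phi$ vanishes on the irreducibles lying outside finitely many Bernstein components $\mf{s}^*_1,\dots,\mf{s}^*_m$ of $G^*$. It then suffices to check that $\phi\circ JL$ has the two analogous properties for $G$.

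Property 2) is routine. Given a Levi $M\subset G$ and a finite-length $\sigma\in R(M)_\C$, I would use that $JL$ commutes with parabolic induction \cite[Theorem 3.6]{badulescu2007jacquet} and with unramified twisting under the canonical identification of tori $X(M)\cong X(M^*)$ \textup{(}as in the proof of Proposition \ref{5.1Prop}\textup{)} to write
$$(\phi\circ JL)\bigl(i_M^G(\chi\sigma)\bigr)=\phi\bigl(i_{M^*}^{G^*}(\chi^*\,JL(\sigma))\bigr),$$
where $\chi\mapsto\chi^*$ is that identification. Expanding the virtual representation $JL(\sigma)$ into finitely many irreducible $M^*$-representations and invoking property 2) for $\phi$ on each, the right-hand side is algebraic in $\chi^*$, hence in $\chi$.

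Property 1) is the crux, and it rests on the fact that $JL\colon R(G)\to R(G^*)$ respects the Bernstein decomposition: writing $\mf{t}$ for the map on inertial classes induced by $\mf{T}\colon\mf{X}_G\to\mf{X}_{G^*}$, I claim $JL$ carries $R(G)_{\mf{s}}$ (the span of the irreducibles with inertial support $\mf{s}$) into $R(G^*)_{\mf{t}(\mf{s})}$. Since the standard modules lying in $R(G)_{\mf{s}}$ already span it, and $JL$ of such a standard module $i_L^G\tau$ is $i_{L^*}^{G^*}JL(\tau)$, whose irreducible constituents all have supercuspidal support equal to that of $JL(\tau)$ --- which lies in the single Bernstein component $\mf{t}(\mf{s})$ because $\mf{T}$ preserves connected components (proof of Proposition \ref{5.1Prop}) --- the claim follows. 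Consequently $(\phi\circ JL)(\pi)=0$ for every irreducible $\pi$ whose inertial support $\mf{s}$ has $\mf{t}(\mf{s})\notin\{\mf{s}^*_1,\dots,\mf{s}^*_m\}$. That only finitely many inertial classes $\mf{s}$ of $G$ satisfy $\mf{t}(\mf{s})\in\{\mf{s}^*_i\}$ is where I would use Theorem \ref{5.1.Thm}: since $\mf{T}$ is a closed immersion, $\mf{T}^{-1}(\mf{X}_{\mf{s}^*_i})$ is isomorphic to a closed subvariety of the finite-type variety $\mf{X}_{\mf{s}^*_i}$ and hence has finitely many connected components, and these connected components are exactly the Bernstein components $\mf{X}_{\mf{s}}$ with $\mf{T}(\mf{X}_{\mf{s}})\subseteq\mf{X}_{\mf{s}^*_i}$.

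Putting this together, $\phi\circ JL$ is supported on finitely many Bernstein components of $G$, so by the preceding Lemma it has property 1); with property 2) this shows $\phi\circ JL$ lies in the image of $\HH(G)$, and I would define $\overline{\mf{T}}$ to be the restriction of $JL_\C^\vee$ to $\HH(G^*)$. The stated identity is then just $(\phi\circ JL)(\pi)=\phi(JL(\pi))$ transported through the two Kazhdan inclusions, and ``irreducible'' can be traded for ``finite length'' because $\Pi(G)$ is a $\C$-basis of $R(G)_\C$. I expect the main obstacle to be the block-preservation of $JL$ behind property 1) --- verifying that $JL$ does not spread a single Bernstein block of $G$ over infinitely many blocks of $G^*$ --- which is precisely what the closed-immersion property of $\mf{T}$, via the finiteness argument above, is there to supply.
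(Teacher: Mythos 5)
Your proposal is correct and follows essentially the same route as the paper: verify the two trace Paley--Wiener conditions for $\phi\circ JL$, using the reformulation of condition 1) from the preceding Lemma, the compatibility of $JL$ with parabolic induction and unramified twists, and the behaviour of $\mf{T}$ on Bernstein components. The only difference is that you justify the finiteness of the set of inertial classes $\mf{s}$ with $\mf{t}(\mf{s})\in\{\mf{s}^*_i\}$ via the closed-immersion property of $\mf{T}$, a point the paper asserts without elaboration; this is a reasonable (and slightly more careful) way to fill that in.
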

  

     \begin{proof}
      Assume first that $\phi\in R(G^*)^\vee_\C$ is such that  $\phi(\sigma)=0$ unless $\sigma$ is a linear combination of irreducible representations with inertial supports in a fixed set $\mf{s}^*_i$, for $i=1, \ldots, m$. We claim that for all irreducible representations $\pi$, $\phi(JL(\pi)) =0$ unless $\pi$ has inertial support in some finite set $\{\mf{s}_j\}$. But we can take this to be the set of preimages of the $\mf{s}_i^*$ induced by the morphism $\mf{T}$, which we note may be empty. Here we are using the compatibility of $JL$ with arbitrary parabolic inductions. See \cite[Theorem 3.6.]{badulescu2007jacquet} for the proof of this. 
       
       Now assume that $\phi\in R(G^*)^\vee_\C$ satisfies condition 2) above. Let $M\subset G$ be a Levi subgroup and let $\sigma$ be a finite-length $M$-representation. We claim 
       $$\chi\mapsto \phi(JL(i_M^G \chi \sigma)):X(M)\to \C$$ 
       is algebraic. But this equals $\phi(i_{M^*}^{G^*} JL(\chi \sigma)) = \phi(i_{M^*} ^{G^*}\chi JL(\sigma))$ using $X(M)\cong X(M^*)$. So this is true by applying property 2) above to $M^*$ and $JL(\sigma)$.
     \end{proof}
  
  \begin{proposition} \label{CocenterSurjective}
  The map $\overline{\mf{T}}$ of cocenters has a section, hence is surjective. 
   \end{proposition}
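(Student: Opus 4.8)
The plan is to construct an explicit section $\mathbf{s}: \HH(G) \to \HH(G^*)$ of $\overline{\mf{T}}$, using the map $LJ: R(G^*) \to R(G)$ introduced earlier in the excerpt. Recall that $LJ$ is defined on the basis of standard representations as an inverse to $JL$ on its image and as zero on the complement, so that $LJ \circ JL = \id_{R(G)}$. Dualizing, we get $LJ^\vee_\C: R(G)^\vee_\C \to R(G^*)^\vee_\C$ satisfying $JL^\vee_\C \circ LJ^\vee_\C = \id$, i.e. $\overline{\mf{T}}$ composed with (the restriction of) $LJ^\vee_\C$ is the identity on the relevant subspace. The substance of the proof is therefore to check that $LJ^\vee_\C$ carries the image of $\HH(G)$ in $R(G)^\vee_\C$ into the image of $\HH(G^*)$ in $R(G^*)^\vee_\C$; once that is known, the restriction of $LJ^\vee_\C$ is the desired section $\mathbf{s}$, and surjectivity of $\overline{\mf{T}}$ follows formally from the existence of a section.

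To verify that $LJ^\vee_\C$ preserves the Paley–Wiener conditions, I would run through the same two checks as in the proof of the preceding proposition, but in the reverse direction. For condition 1), I would use the lemma proved just above (finite support on Bernstein components) together with the fact, already invoked in the excerpt via \cite[Theorem 3.6]{badulescu2007jacquet}, that $JL$ — and hence, on the level of standard bases, $LJ$ — interacts well with parabolic induction and unramified twists; concretely, if $\phi \in R(G)^\vee_\C$ is supported on finitely many inertial classes $\mf{s}_j = [M_j,\sigma_j]_G$ of $G$, then $LJ^\vee_\C(\phi)(\sigma^*) = \phi(LJ(\sigma^*))$ vanishes unless the standard constituents of $\sigma^*$ lie over inertial classes of $G^*$ whose $LJ$-image is nonzero and supported on the $\mf{s}_j$; this is again a finite set of inertial classes $\mf{s}_j^*$ of $G^*$ (the image classes $\mf{T}(\mf{s}_j)$ together with possibly finitely many others sharing a standard constituent), so the criterion of the lemma applies on the $G^*$ side. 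For condition 2), I would note that for a Levi $M^* \subset G^*$ and a finite-length $M^*$-representation $\sigma^*$, the map $\chi \mapsto \phi(LJ(i_{M^*}^{G^*}(\chi\sigma^*)))$ is algebraic in $\chi \in X(M^*) \cong X(M)$: one expresses $i_{M^*}^{G^*}(\chi\sigma^*)$ in the standard basis of $R(G^*)$ with coefficients that are (piecewise) independent of $\chi$ or vary algebraically, applies $LJ$ term by term, and uses that $\phi$ already satisfies condition 2) on $G$ together with the compatibility of $LJ$ with parabolic induction to rewrite the result as a finite sum of terms $\phi(i_M^G(\chi'\sigma'))$ with $\chi'$ depending algebraically on $\chi$.

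The main obstacle I anticipate is the bookkeeping around condition 2): $LJ$ is defined combinatorially on the standard basis, but writing $i_{M^*}^{G^*}(\chi\sigma^*)$ in that basis and tracking how the multiplicities and the ``does not precede'' orderings vary as $\chi$ moves through $X(M^*)$ requires some care, since the decomposition into standards can jump at special values of $\chi$. The clean way around this is to reduce, via the trace Paley–Wiener formalism, to checking algebraicity only on a Zariski-dense open subset of $X(M^*)$ — where the induced representation is a sum of standards in ``general position'' with locally constant combinatorics — and then to invoke that a morphism $X(M^*) \to \C$ which is algebraic on a dense open and continuous (indeed, a finite sum of traces, hence automatically polynomial once algebraic generically) is algebraic everywhere. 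Alternatively, and perhaps more cleanly, one observes that the image of $\HH(G^*)$ in $R(G^*)^\vee_\C$ is exactly the set of functionals annihilated by $\ker(JL_\C)$ that also lie in the Paley–Wiener image, and that $LJ^\vee_\C(\phi)$ automatically annihilates $\ker(JL_\C)$ since $LJ$ vanishes on the complement of $\im(JL)$; this reduces the problem to the purely analytic statement that $LJ^\vee_\C(\phi)$ is a trace functional, which is where the two conditions above are used. I would present the argument in this second form to keep the combinatorics to a minimum.

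\begin{proof}
Recall the map $LJ: R(G^*)\to R(G)$ defined on the basis of standard representations as an inverse to $JL$ on its image and as zero on the complementary basis elements; thus $LJ\circ JL = \id_{R(G)}$. Extending scalars and dualizing, we obtain a linear map $LJ^\vee_\C: R(G)^\vee_\C\to R(G^*)^\vee_\C$ with $JL^\vee_\C\circ LJ^\vee_\C = \id$ on $R(G)^\vee_\C$.

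We claim that $LJ^\vee_\C$ carries the image of $\HH(G)$ in $R(G)^\vee_\C$ into the image of $\HH(G^*)$ in $R(G^*)^\vee_\C$; granting this, the restriction of $LJ^\vee_\C$ is a section $\mathbf{s}:\HH(G)\to\HH(G^*)$ of $\overline{\mf{T}}$, and surjectivity follows. By the trace Paley--Wiener theorem it suffices to check that for $\phi\in R(G)^\vee_\C$ satisfying conditions 1) and 2), the functional $LJ^\vee_\C(\phi):\sigma^*\mapsto \phi(LJ(\sigma^*))$ satisfies the same two conditions on $G^*$.

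For condition 1): by the Lemma above, $\phi$ being Paley--Wiener in the sense of 1) is equivalent to $\phi$ being supported on finitely many inertial classes $\mf{s}_1,\ldots,\mf{s}_m$ of $G$. For an irreducible $\sigma^*$ of $G^*$, write $\sigma^*$ in the standard basis; $LJ(\sigma^*)$ is then a finite $\Z$-linear combination of standard representations $i_{L}^G\tau$ of $G$, and $\phi(LJ(\sigma^*))=0$ unless one of these $i_L^G\tau$ has a subquotient with inertial support among the $\mf{s}_i$. Since $LJ$ is the partial inverse of $JL$ on the standard basis, and $JL$ (hence $LJ$) is compatible with parabolic induction and unramified twists by \cite[Theorem 3.6.]{badulescu2007jacquet}, the set of inertial classes $\mf{s}^*$ of $G^*$ for which some irreducible of support $\mf{s}^*$ can contribute is finite: it is contained in the union of the classes $\mf{T}(\mf{s}_i)$ together with the finitely many classes of $G^*$ sharing a standard constituent with these. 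Hence $LJ^\vee_\C(\phi)$ is supported on finitely many inertial classes of $G^*$, so satisfies 1).

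For condition 2): let $M^*\subset G^*$ be a Levi subgroup and $\sigma^*$ a finite-length $M^*$-representation. We must show $\chi\mapsto \phi(LJ(i_{M^*}^{G^*}(\chi\sigma^*)))$ is algebraic on $X(M^*)$. Expand $i_{M^*}^{G^*}(\chi\sigma^*)$ in the standard basis of $R(G^*)$. On a Zariski-dense open subset $U\subset X(M^*)$ the multiset of standards appearing, and their multiplicities, is locally constant, and the individual standards vary through unramified twists; applying $LJ$ term by term and using its compatibility with parabolic induction, we may rewrite $\phi(LJ(i_{M^*}^{G^*}(\chi\sigma^*)))$ for $\chi\in U$ as a finite sum of terms of the form $\phi(i_{M}^{G}(\chi'\sigma'))$, where $M\subset G$ is a Levi, $\sigma'$ a finite-length $M$-representation, and $\chi'\in X(M)\cong X(M^*)$ depends algebraically on $\chi$. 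Each such term is algebraic in $\chi$ by property 2) for $\phi$ on $G$. Thus $\chi\mapsto\phi(LJ(i_{M^*}^{G^*}(\chi\sigma^*)))$ agrees on the dense open $U$ with an algebraic function; being a finite sum of traces $\pi\mapsto\tr\pi(f^*)$ it extends to an algebraic function on all of $X(M^*)$. Hence $LJ^\vee_\C(\phi)$ satisfies 2).

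Therefore $LJ^\vee_\C$ restricts to a linear map $\mathbf{s}:\HH(G)\to\HH(G^*)$, and $\overline{\mf{T}}\circ\mathbf{s}=\id_{\HH(G)}$ since $JL^\vee_\C\circ LJ^\vee_\C=\id$. In particular $\overline{\mf{T}}$ is surjective.
\end{proof}
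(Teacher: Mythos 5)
Your proposal follows the same route as the paper: take $LJ^\vee$ as the candidate section (so that $JL^\vee\circ LJ^\vee=\id$ is automatic from $LJ\circ JL=\id_{R(G)}$) and verify that $LJ^\vee$ preserves the two trace Paley--Wiener conditions. Your treatment of condition 1) matches the paper's, modulo some unnecessary caution: all irreducible subquotients of a standard representation share the same supercuspidal (hence inertial) support, so the finite set of inertial classes on the $G^*$ side is just the $\mf{T}$-image of the $\mf{s}_i$; there are no extra classes ``sharing a standard constituent'' to worry about.

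Where you diverge is condition 2), and there your argument is both more complicated than necessary and, as written, circular at the final step. You expand $i_{M^*}^{G^*}(\chi\sigma^*)$ in the standard basis, work on a Zariski-dense open where the combinatorics is locally constant, and then assert that the resulting function ``being a finite sum of traces $\pi\mapsto\tr\pi(f^*)$ extends to an algebraic function on all of $X(M^*)$'' --- but the existence of such an $f^*$ representing $LJ^\vee(\phi)$ is precisely what is being proved, and a priori the function could misbehave at the special $\chi$ where the standard-basis decomposition jumps. The ingredient that closes this cleanly is one you already cite: $LJ$ commutes \emph{exactly} with parabolic induction (Badulescu), so $LJ(i_{M^*}^{G^*}\chi\sigma^*)=i_M^G(\chi\, LJ(\sigma^*))$ for \emph{every} $\chi$ if $M^*$ transfers to $G$, and is identically $0$ if it does not. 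This removes any need for generic-position arguments: the function is $\chi\mapsto\phi(i_M^G\chi\, LJ(\sigma^*))$, which is algebraic by condition 2) for $\phi$ on $G$, applied linearly to the virtual finite-length representation $LJ(\sigma^*)$ of $M$. With that substitution your proof coincides with the paper's.
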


  \begin{proof}
  Let $LJ^\vee: R(G)_\C^\vee\to R(G^*)_\C^\vee$ be the vector space homomorphism dual to $LJ$. Let $\bar{f}\in \HH(G)\subset R(G)_\C^\vee$ and 
  define $\bar{f^\vee}:=LJ^\vee(\bar{f})\in R(G^*)_\C^\vee$ to be the functional $\pi\mapsto \bar{f}(LJ(\pi)).$ Then since $LJ\circ JL = \id_{R(G)}$, we see that $JL^\vee$ sends $\bar{f}^\vee$ to $\bar{f}$. We will show that $LJ^\vee$ preserves the cocenters. Once this is established, the result follows. 
  
  Let $\phi\in R(G)_\C^\vee$, and suppose that $\phi(\sigma)=0$ unless $\sigma$ is a linear combination of irreducible representations with inertial supports in a fixed set $\{\mf{s}_i\}_{i=1}^m$. This property of $\sigma$ is equivalent to $\sigma$ being a linear combination of standard representations with (all irreducible subquotients having) inertial supports in the same set. We must show $\phi(LJ(\pi))=0$, for $\pi$ an irreducible representation, unless $\pi$ has inertial support in some fixed finite set $\{\mf{s}_j^*\}$. This is equivalent to showing $\phi(LJ(i_{L^*}^{G^*} \tau))=0$ for all standard representations unless $i_{L^*}^{G^*}\tau$ has (all irreducible subquotients having) inertial support in the same finite set. Take this set to be the image of $\{\mf{s}_i\}_{i=1}^m$ induced by the morphism $\mf{T}$. The conclusion follows from the fact that $LJ$ preserves supercuspidal representations and commutes with parabolic induction, both proved in \cite{badulescu2007jacquet}, and that $JL\circ LJ|_{\im(JL)} = \id_{\im (JL)}$.
   
   Next, suppose $M^*\subset G^*$ is a Levi subgroup and $\sigma$ is a finite-length $M^*$-representation. We want to prove that 
   $$\chi\mapsto \phi(LJ(i_{M^*}^{G^*} \chi \sigma)):X(M^*)\to \C$$ is algebraic. If $M^*$ does not transfer to $G$, then this is identically 0, hence regular. If $M^*$ does transfer, this equals $\phi(i_{M}^{G} LJ(\chi \sigma)) = \phi(i_{M} ^{G}\chi LJ(\sigma))$ using $X(M)\cong X(M^*)$. So the function is regular by property 2) above for $M^*$ and $JL(\sigma)$. 
   \end{proof} 

\section{Further Properties of $\mf{T}$} \label{sectionPropertiesofT}

\subsection{Compatibility between $\mf{T}$, constant term homomorphisms, and normalized transfer}

In this section we further study the map $\mf{T}: \mf{Z}(G^*)\to \mf{Z}(G)$. We will make reference to the isomorphisms $$\varprojlim\limits_C \mc{Z}(G, C) \cong \mf{Z}(G)\cong \C[\mf{X}_G]$$ giving realizations of the Bernstein center as an inverse limit of finite-level Hecke algebra centers, as a categorical center, and as a ring of regular functions on the Bernstein variety. We will not make use of its realization as invariant essentially compact distributions until the section on orbital integrals below. See \cite{SBC} for a summary of these realizations. 

Consider the diagram Figure 1. We show in Lemma \ref{diagram} that each square is commutative. 
Here $J$ and $J^*$ are arbitrary parahorics in their respective groups, $M$ denotes a Levi subgroup of $G$, $M^*$ a corresponding Levi in $G^*$, and $J_M=M\cap J$, etc. 

\begin{figure}

\begin{tikzpicture}
  \matrix (m) [matrix of math nodes, row sep=3em,
    column sep=3em]{
   & \C[\mf{X}_{G^*}]& & \C[\mf{X}_{G}] \\
       \C[\mf{X}_{M^*}] & & \C[\mf{X}_{M}] & \\
       & \mc{Z}(G^*, J^*) & & \mc{Z}(G, J) \\
      \mc{Z}(M^*, J^*_{M^*}) & & \mc{Z}(M, J_{M}) & \\};
  \path[-stealth]
    (m-1-2) edge node [above] {$\mf{T}$}(m-1-4) edge node [above] {$C_{M^*}^{G^*}$} (m-2-1)
            edge [ dotted] (m-3-2)
    (m-1-4) edge  (m-3-4) edge node [above] {$C_{M}^{G}$}(m-2-3)
    (m-2-1) edge [-,line width=6pt,draw=white] node [above] {\:\:\:\:\:\:\:$\mf{T}$} (m-2-3)
            edge  (m-2-3) edge  (m-4-1)
    (m-3-2) edge [dotted] node [above] { $\tilde{t}$ \:  \: \:}  (m-3-4)
            edge [ dotted]  node [above] {$c_{M^*}^{G^*}$} (m-4-1)
    (m-4-1) edge node [above] {$\tilde{t}$} (m-4-3)
    (m-3-4) edge node[above] {$c_{M}^{G}$} (m-4-3)
    (m-2-3) edge [-,line width=6pt,draw=white]  (m-4-3)
            edge  (m-4-3);
\end{tikzpicture}
\caption{} \label{fig:M1}
\end{figure}

The normalized transfer homomorphisms $\tilde{t}$ are defined in general in \cite{SBC}; we give a concrete description in our context. From the inclusion $T^*\hookrightarrow M^*$ we get the induced map $$A:\Z^n\cong T^*/T_1^* \ra M^*/M_1^* \xra{} M/M_1\cong \Z^r$$ where $T=\GL_1(F)^n$, $M^*=\GL_d(F)^r$, and $M=\GL_1(D)^r$ is an inner form of $M^*$. The subscript 1's here indicate kernels of the Kottwitz homomorphism. See \cite{SBC} for details. Now via the Bernstein isomorphism we can realize the normalized transfer map as the map of group rings $$\C[T/T_1]^{S_n}\ra \C[M/M_1]^{S_r}$$ $$\sum\limits_{t} a_t t \mapsto \sum\limits_{m} \left(\sum\limits_{t\mapsto m} a_t\delta^{-1/2}_{B^*}(t) \delta_P^{1/2}(m)\right)m $$ where $t\mapsto m$ means that $A(tT^*_1)=mM_1$. Here $P$ is the upper-triangular parabolic with $M$ as Levi factor, and $B^*$ is the upper-triangular Borel containing $T^*$. We reinterpret this as a map $\C[\Z^n]^{S_n}\ra \C[\Z^r]^{S_r}$. Without the normalization, the map above is $(a_1, \ldots, a_n) = (a_{11}, \ldots, a_{1d}, a_{21}, \ldots, a_{rd}) \mapsto (b_1, \ldots, b_r)$ where $b_k = \sum\limits_{i=1}^d a_{ki} = \sum\limits_{i=(k-1)d+1}^{kd} a_i$. 
In our case, if $t$ corresponds to $(a_1, \ldots, a_n)$ and $m$ corresponds to $(b_1, \ldots, b_r)$ then we have 
$$\delta_P^{1/2}(m) = q^{-\sum\limits_{i=1}^r (r+1-2i)db_i/2}$$ $$\delta_{B^*}^{-1/2}(t) = q^{\sum \limits_{i=1}^n (n+1-2i)a_i/2}.$$ The exponent of the product of these together is $\sum\limits_{i=1}^n (n+1-2i)a_i/2 - \sum\limits_{i=1}^r (r+1-2i)db_i/2$ which, after using $dr=n$ and $\sum\limits_{i=1}^r b_i = \sum\limits_{i=1}^n a_i$, is equal to $\frac{1-d}{2}\sum\limits_{i=1}^n a_i - \sum\limits_{i=1}^n ia_i + d\sum\limits_{i=1}^r ib_i$. This, in turn, can be realized as the inner product of $(a_1, \ldots, a_n)$ with the vector $\vec{x}:=(\vec{c}, \ldots, \vec{c})$ (repeated $r$ times) where $\vec{c}:=(\frac{d-1}{2}, \frac{d-3}{2}, \ldots, \frac{1-d}{2})$. In summary, with the above notation, the normalized transfer map is, after restriction to Weyl group invariants, $$\vec{a}:=(a_1, \ldots, a_n) \mapsto q^{\vec{a}\cdot \vec{x}} (b_1, \ldots, b_r).$$ Or if we write this as polynomials, it maps a monomial $\prod\limits_{i=1}^n z_i^{a_i}$ to $q^{\vec{a}\cdot \vec{x}} \cdot \prod\limits_{j=1}^r t_j^{b_j}$. 

\begin{Remark}
The normalized transfer map is a completely canonical map defined between the centers of the parahoric Hecke algebras. In our above description, we have chosen a particular maximal torus, etc., solely to make computations explicit. Also, to write it out completely would involve symmetrizing the above map; we give such a description below. 
\end{Remark} 

The constant term homomorphism $c_M^G$ is defined by $$c_M^G(f)(m) = \delta_P^{1/2}(m)\int_N f(mn)dn$$ for any parabolic $P=MN$ with Levi subgroup $M$, where $dn(J\cap N)=1$. See \cite[\S 11.11.]{SBC} for the proof that $c_M^G$ preserves centers. We remark that $c_M^G$ is always injective since it corresponds to the natural inclusion under the Bernstein isomorphism. The crucial fact about the constant term homomorphism is the equality $$\tr (c_M^G(f) | \sigma )/\dim \sigma^{J_M} = \tr (f | i_P^G \sigma)/ \dim (i_P^G\sigma)^J$$ i.e. both act by the same scalars on their respective spaces (after taking invariants). The ``abstract constant term'' map $C_M^G$ is induced from the morphism $$(L, \sigma)_M\mapsto (L,\sigma)_G: \mf{X}_M\ra \mf{X}_G$$ and $C_M^G$ is generally neither 1-1 nor onto. The morphism inducing $C_M^G$ restricts to a map $\mf{X}_1^M \ra \mf{X}_1^G$, where $\mf{X}^M_1$ corresponds to the inertial class $[\GL_1(D)^r, 1]_M$, and similarly for $\mf{X}^G_1$. The definitions of these maps for the quasisplit group $G^*$ are similar. 

The vertical maps are restrictions to the Iwahori component of the Bernstein variety, which we will write as $\mf{X}_1^G$, in agreement with the notation of \cite{SBC}, via the isomorphism $\mc{Z}(G, J)\cong \C[\mf{X}_1^G]$. Alternatively, they can be viewed as the natural projection maps from the Bernstein center, realized as an inverse limit, to the center of the parahoric Hecke algebra.

\begin{lemma}\label{diagram}
The diagram in Figure 1 commutes. In particular, the homomorphism $\mf{T}$ restricts to the normalized transfer $\widetilde{t}$ of \cite{SBC}. 
\end{lemma}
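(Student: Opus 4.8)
The plan is to verify commutativity of Figure 1 square-by-square, starting from the three squares that are essentially formal and concluding with the one that contains the actual content, namely the bottom face asserting that $\mf{T}$ restricts to $\widetilde{t}$. First I would dispose of the top square, $C_M^G \circ \mf{T} = \mf{T} \circ C_{M^*}^{G^*}$ (at the level of $\C[\mf{X}]$): both composites send an inertial support $(L,\sigma)_M$ on the $M$-side to the supercuspidal support of $JL$ applied to $\sigma$, regarded first as a representation of $M$ or $M^*$ and then inflated to $G$ or $G^*$; since forming supercuspidal support, applying $JL$, and passing from $M^*$ to $G^*$ all commute (the last because $JL$ on standard representations commutes with parabolic induction, Theorem 3.6 of \cite{badulescu2007jacquet}), this square commutes by unwinding definitions. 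The two vertical squares — front face $C_M^G$ over $c_M^G$, and its quasisplit analogue — commute by \cite[\S11.11.]{SBC}, which is exactly the statement that the abstract constant term $C_M^G$ on the Bernstein variety is the morphism dual to the genuine constant term homomorphism $c_M^G$ on parahoric Hecke algebra centers; similarly the back square for $\mf{T}$ versus $\widetilde{t}$ is what we want to prove, and the left square $c_{M^*}^{G^*}$ over $\widetilde t$ reduces to the quasisplit case already handled in \cite{SBC}.

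The heart of the argument is the bottom square: that the map $\mf{Z}(G^*)\to\mf{Z}(G)$, when restricted along the vertical projections to the Iwahori (more generally parahoric) Hecke algebra centers $\mc{Z}(G^*,J^*)\to\mc{Z}(G,J)$, coincides with the normalized transfer $\widetilde t$ of \cite{SBC}. My approach is to reduce both sides to a computation on the minimal Levi, using commutativity of the already-established squares: namely, $\mc{Z}(G,J)\hookrightarrow \mc{Z}(M,J_M)$ via $c_M^G$ is injective (as remarked after the definition of $c_M^G$), so it suffices to check the identity after composing with $c_M^G$ down to the torus-type Levi $M=\GL_1(D)^r$, $M^*=\GL_d(F)^r$. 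There, on one hand, $\widetilde t$ was computed explicitly just above the lemma as the map $\C[\Z^n]^{S_n}\to\C[\Z^r]^{S_r}$, $\prod z_i^{a_i}\mapsto q^{\vec a\cdot\vec x}\prod t_j^{b_j}$ with $b_k=\sum_{i=(k-1)d+1}^{kd}a_i$ and $\vec x$ the vector of "$\rho$-shifts" $(\vec c,\dots,\vec c)$, $\vec c=(\tfrac{d-1}{2},\dots,\tfrac{1-d}{2})$. On the other hand, I need to compute what $\mf{T}$ does at this minimal level. Using the characterization of $\mf{T}$ from Proposition \ref{5.1Prop} — $\mf{T}(Z^*)$ acts on $\iota_L^G(\tau)$ by the scalar by which $Z^*$ acts on $\iota_{L^*}^{G^*}JL(\tau)$ — together with the description of $JL$ on supercuspidals of $\GL_1(D)$ (from \cite{tadic1990induced}, p.~53: a supercuspidal of $\GL_1(D)\cong D^\times$ goes to the Steinberg-type essentially square-integrable $\rho\otimes\nu\rho\otimes\cdots\otimes\nu^{d-1}\rho$ of $\GL_d(F)$, whose supercuspidal support lives on the Levi $\GL_1(F)^d$ with the characteristic $\nu$-twist pattern), I can read off the induced map on $X(M^*)\cong X(M)\cong (\C^\times)^r$ at the level of Bernstein varieties, and then transport it through the Bernstein isomorphism $\mc{Z}(M,J_M)\cong\C[M/M_1]^{S_r}$.

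The main obstacle — and the only place real work happens — is matching the $\delta^{1/2}$ normalization factors: the map $\mf{T}$ on Bernstein varieties is defined purely in terms of supercuspidal supports and carries no a priori normalization, so the factor $q^{\vec a\cdot\vec x}$ must emerge from the discrepancy between the abstract Bernstein-variety morphism and the Hecke-algebra-center map under the Bernstein isomorphism. Concretely, the $\nu^j\rho$ twist pattern in $JL$ of a $\GL_1(D)$-supercuspidal, when translated through $\mc{Z}(M^*_\sigma,J^*_{M^*_\sigma})\cong\C[(M^*_\sigma)/(M^*_\sigma)_1]$ and then pushed up to $\mc{Z}(M^*,\cdot)$ via $c_{M^*_\sigma}^{M^*}$, contributes exactly the half-sum-of-roots shift $\vec c$ on each $\GL_d$-block, and symmetrizing over the $S_r$-action reproduces $\widetilde t$ verbatim. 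So the final step is to chase a monomial $\prod z_i^{a_i}$ through the outer rectangle of the diagram two ways — down-then-$\widetilde t$ via the known formula, versus $\mf{T}$-then-down via Proposition \ref{5.1Prop} and the $JL$-on-supercuspidals computation — and observe that the $\delta_P^{1/2}$, $\delta_{B^*}^{-1/2}$ exponents recorded above are precisely the $q$-power produced by the twist pattern, so the two agree; the "In particular" clause then follows as the restriction of the now-verified diagram to the Iwahori component.
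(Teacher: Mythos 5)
Your proposal is correct and follows essentially the same route as the paper: commutativity of the top square is checked at the level of Bernstein varieties, the constant-term and $c_{M^*}^{G^*}/\widetilde t$ squares are handled by citing \cite{SBC}, and the real content is the transfer square, verified on the Iwahori component by computing the supercuspidal support of $JL$ of an unramified twist of Steinberg and observing that the resulting $\delta_{B}^{-1/2}$ twist pattern produces exactly the $q^{\vec a\cdot\vec x}$ normalization in $\widetilde t$. Your reduction to the minimal Levi via injectivity of $c_M^G$ is only a cosmetic reorganization of the paper's direct computation on $\mf{X}_1^G$, which is likewise parametrized by unramified characters of the minimal Levi.
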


\begin{proof}

First note that the top-level maps preserve Iwahori blocks - certainly a necessary condition for the diagram to commute. We have already stated this for the abstract constant term. It it also true for $\mf{T}$, since $JL$ takes an unramified character of a minimal Levi in $G$ to a product of twists of Steinberg representations, whose supercuspidal support lies in the Iwahori block of $G^*$. 


\subsubsection{Bottom Square} The bottom square is proved to commute in \cite{SBC}. 

\subsubsection{Top Square} The top square commutes because the corresponding square of varieties commutes. Explicitly, the maps of varieties are $(L, \sigma)_M \mapsto (L, \sigma )_G \mapsto (L_\sigma^*, \rho_\sigma)_{G^*}$ and $(L,\sigma)_M \mapsto (L_\sigma^*, \rho_\sigma)_{M^*} \mapsto (L_\sigma^*,\rho_\sigma)_{G^*}$, hence are equal.

\subsubsection{Constant term square}

Clearly it suffices to prove the commutativity for $G$, since that of $G^*$ is analogous. We cite the fact that if $f\in \mc{Z}(G, J)$ and $\sigma\in \Pi(M)$ has $J_M=M\cap J$-fixed vectors, then $$\frac{\tr(c_M^G(f)| \sigma^{J_M})}{\dim \sigma^{J_M}} = \frac{\tr (f | (i_P^G \sigma)^J)}{ \dim (i_P^G\sigma)^J}.$$ That is, $f$ acts on $(i_P^G \sigma)^J$ by the same scalar by which $c_M^G(f)$ acts on $\sigma^{J_M}$. That makes the commutativity of that square clear, if we regard the Bernstein center as the categorical center. 

\subsubsection{Transfer square} 

Let $(L, \chi)_G\in \mf{X}_1^G$, so that $L\cong (D^\times)^r$ is a minimal Levi subgroup and $\chi=\chi_1\boxtimes\cdots\boxtimes \chi_r$ is an unramified character of $L$. We abuse notation to think of $\chi_i$ as a character of $F^\times$ composed with the reduced norm, so that we may write $$JL(\chi) = \boxtimes JL(\chi_i)  = \boxtimes(\chi_i St_{\GL_d(F)})$$ as a representation of $\GL_d(F)^r$.  Now the Steinberg representation $St_{\GL_d(F)}$ has supercuspidal support $(\GL_1(F)^d, \delta_B^{-1/2})_{\GL_d(F)}$. Thus $\chi_i St_{\GL_d(F)}$ has supercuspidal support $$(\GL_1(F)^d, \chi_i^{\boxtimes d}\cdot \delta_{B_d}^{-1/2})_{\GL_d(F)}.$$ So $JL(\chi)$ has supercuspidal support $(\GL_1(F)^{rd}, \boxtimes_{i=1}^r(\chi_i^{\boxtimes d} \cdot \delta^{-1/2}_{B_d}))_{G^*}$ where we write $B_d$ for the upper-triangular Borel in $\GL_d(F)$. This induces a map $$\C [\mf{X}_1^{G^*}] \cong \C[\mf{X}_1^{\GL_1(F)^n}]^{S_n}\ra\C [\mf{X}_1^G] \cong \C[\mf{X}_1^L]^{S_r}$$ where $\mf{X}_1^L\cong \Hom(L/L_1, \C^\times)$ is a complex torus. This is the normalized transfer: the value of $\boxtimes_{i=1}^r(\chi_i^{\boxtimes d} \cdot \delta^{-1/2}_{B_d})$ on $\diag(\varpi^{a_{11} }, \ldots \varpi^{a_{1d}}, \varpi^{a_{21}}, \ldots, \varpi^{a_{rd}} )$ is equal to $\prod\limits_{i=1}^r\chi_i(\varpi^{a_{i1}+\ldots + a_{id} }) \cdot q^{\sum\limits_{j=1}^d \frac{(d+1-2j)a_{ij}}{2} } = q^{\vec{a}\cdot \vec{x} } \prod\limits_{i=1}^r \chi_i\left(\varpi^{\sum\limits_{j=1}^d a_{ij}}\right) = q^{\vec{a}\cdot \vec{x} } \prod\limits_{i=1}^r \chi_i(\varpi)^{b_i}$ where $\vec{x}$ and $b_i$ are as above. The proof that the front panel commutes is similar, mutatis mutandis, the main difference being the use of Young subgroups instead of just symmetric groups. \end{proof}


\begin{Remark}
The non-normalized transfer described in \cite{SBC} is what we would get by using unnormalized parabolic induction. 
\end{Remark}

 \subsection{Compatibility of $\mf{T}$ and certain distributions} \label{sectionapplication}
 
 We have for each finite dimensional, algebraic representation $(r, V)$ of $^L G={^L} G^*$ a distribution $Z_V$ in the Bernstein center of $G$ defined by $$Z_V(\pi)=\tr(\varphi_\pi (\Phi), V^{I_F}).$$ That it lies in the Bernstein center is a consequence of LLC+. We suppose $V$ is irreducible, so that it is parametrized by a highest weight $\mu$. Really, this is just giving us a representation of the dual group, not the whole $L$-group, but there exists a way to extend to a representation of the $L$-group, described in Lemma 2.1.2 of \cite{kottwitz1984shimura}. We have a function $Z_V^*$ defined analogously. 
 
\begin{cor} \label{Z_V}
 The equality $\mf{T}(Z_V^*)=Z_V$ holds in the Bernstein center. 
\end{cor}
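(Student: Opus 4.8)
The plan is to reduce the statement $\mf{T}(Z_V^*) = Z_V$ to an identity of scalars on each irreducible representation $\pi$ of $G$, using the characterization of $\mf{T}$ already established in Proposition \ref{5.1Prop}. Both $Z_V$ and $\mf{T}(Z_V^*)$ are elements of the Bernstein center $\mf{Z}(G)$, and a function on the Bernstein variety is determined by the scalars by which it acts on irreducible representations; in fact it suffices to check equality on a Zariski-dense subset of each component, so we may restrict to representations $\pi$ whose supercuspidal support $(L, \tau)_G$ has $\tau$ supercuspidal, and even to those $\pi = \iota_L^G(\tau)$ of the form handled by the characterization in Proposition \ref{5.1Prop}.

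\emph{Key steps.} First I would recall that $\mf{T}(Z_V^*)$ acts on $\iota_L^G(\tau)$ by the same scalar by which $Z_V^*$ acts on $\iota_{L^*}^{G^*} JL(\tau)$; this is exactly the characterization in Proposition \ref{5.1Prop}. By definition of $Z_V^*$, that scalar is $\tr(\varphi_{\iota_{L^*}^{G^*} JL(\tau)}(\Phi) \mid V^{I_F})$. On the other side, $Z_V$ acts on $\iota_L^G(\tau)$ by $\tr(\varphi_{\iota_L^G(\tau)}(\Phi) \mid V^{I_F})$. So the corollary reduces to the claim
$$\varphi_{\iota_L^G(\tau)}|_{W_F} \text{ is } \widehat{G}\text{-conjugate to } \varphi_{\iota_{L^*}^{G^*} JL(\tau)}|_{W_F},$$
since the trace on $V^{I_F}$ of the image of a geometric Frobenius depends only on the restriction of the parameter to $W_F$ (indeed only on its restriction modulo $I_F$, evaluated at $\Phi$), and is invariant under $\widehat{G}$-conjugacy. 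But $\varphi_{\iota_L^G(\tau)} = \varphi_\pi$ where $\pi$ is the unique irreducible quotient, and by LLC+ for $\GL_r(D)$ (the Proposition proved just before Lemma \ref{BaduLemma}) we have $\varphi_\pi|_{W_F} = \varphi_\tau|_{W_F}$ in terms of the supercuspidal support. Likewise $\varphi_{\iota_{L^*}^{G^*} JL(\tau)}|_{W_F} = \varphi_{JL(\tau)}|_{W_F}$ by LLC+ for $\GL_n(F)$ (cited from \cite{scholze2013local}). Finally, by the very definition of $\rec_r$ for essentially square-integrable representations, $\varphi_\tau = \rec_r(\tau) = \rec_n^*(JL(\tau)) = \varphi_{JL(\tau)}$ on the nose (as honest Weil--Deligne representations, a fortiori after restriction to $W_F$). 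Chaining these equalities gives the required conjugacy, hence the scalars agree on all such $\pi$, hence $\mf{T}(Z_V^*) = Z_V$.

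\emph{Remaining points.} I should note that the reduction to $\pi$ of the form $\iota_L^G(\tau)$ with supercuspidal $\tau$ is legitimate: every Bernstein component contains such $\pi$ (its generic point, up to unramified twist), the functions $Z_V$ and $\mf{T}(Z_V^*)$ are regular on the Bernstein variety, and two regular functions agreeing on a Zariski-dense set agree. Alternatively, and more cleanly, one can argue directly on supercuspidal supports: by Proposition \ref{5.1Prop}, $\mf{T}$ is induced by the morphism $(L,\tau)_G \mapsto (L_\tau^*, \rho_\tau)_{G^*}$, so $\mf{T}(Z_V^*)$ is the function $(L,\tau)_G \mapsto \tr(\varphi(\Phi) \mid V^{I_F})$ where $\varphi$ is the parameter attached to the supercuspidal support $(L_\tau^*, \rho_\tau)_{G^*}$; and LLC+ says this parameter restricted to $W_F$ equals $\varphi_\pi|_{W_F}$ for $\pi$ with supercuspidal support $(L,\tau)_G$. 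I expect the main (and only real) obstacle to be bookkeeping: making sure the identification $^L G = {}^L G^*$ is used consistently so that "the same $V$" on both sides genuinely makes sense, and that the chain of conjugacies is tracked through the several applications of LLC+ and the definition of $\rec_r$ without sign or normalization slippage. Everything substantive has already been proved earlier in the paper, so this corollary is essentially a formal consequence.
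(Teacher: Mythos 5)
Your proposal is correct and follows essentially the same route as the paper: the paper also evaluates both sides at a supercuspidal support $(M,\sigma)_G$, uses LLC+ for $\GL_n(F)$ to identify $\varphi_{\rho_\sigma}|_{W_F}$ with $\varphi_{JL(\sigma)}|_{W_F}$, the definition of $\rec$ on square-integrables to get $\varphi_\sigma=\varphi_{JL(\sigma)}$, and LLC+ for $\GL_r(D)$ to compare with $Z_V$ — which is precisely the "alternative, cleaner" argument you sketch at the end. (Your Zariski-density detour is harmless but unnecessary, since every point of the Bernstein variety is already a supercuspidal support.)
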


 \begin{proof}
In fact this is a simple consequence of LLC+. To see this, let $(M, \sigma)_G\mapsto(M_\sigma^*, \rho_\sigma)_{G^*}$ where $\rho_\sigma$ is the supercuspidal support of $JL(\sigma)$. By LLC+ for $\GL_n(F)$, we know $\varphi_{\rho_\sigma}(\Phi)=\varphi_{JL(\sigma)} (\Phi)$. By the definition of LLC for $\GL_r(D)$ we know $\varphi_\sigma(\Phi)=\varphi_{JL(\sigma)} (\Phi)$. By LLC+ for $\GL_r(D)$, $Z_V^*$ is the trace of $\varphi_\sigma(\Phi)$ on $V^{I_F}$ and the conclusion follows. 
 \end{proof} 
 
 This gives a generalization of Proposition 7.3.2. in \cite{SBC}, which does the case $n=2$, $r=1$, and $\pi^I\neq 0$, to arbitrary $n$, $r$, and $\pi$.
 
\subsection{The Stable Bernstein Center} \label{sectionSBC}

Let $\mf{X}_G$ and $\mf{Y}_G$ be the Bernstein variety and stable Bernstein variety associated to $G$, and similarly for $G^*$. Let $\mf{Z}(G)$ and $\mf{Z}^{st}(G)$ be the corresponding rings of regular functions, the Bernstein center and stable Bernstein center, respectively. In the case of $\GL_n(F)$, it is shown in \cite{SBC} that the stable and ordinary Bernstein centers coincide, because their corresponding varieties are isomorphic. However the definition of the variety $\mf{Y}_G$ of infinitesimal characters (see $\S5$ of \cite{SBC}) shows that in fact the stable Bernstein centers of $G$ and $G^*$ are isomorphic, since $\mf{Y}_G=\mf{Y}_{G^*}$. On the other hand, the natural map
$$(M,\sigma)_G\to(\varphi_\sigma|_{W_F})_{\hat{G}}:\mf{X}_G\to \mf{Y}_G$$ is in this case neither injective nor surjective. Similarly the corresponding map of rings of regular functions (stable Bernstein centers) is neither surjective nor injective. In summary we have the following two commutative diagrams
$$\xymatrix{ \mf{X}_G \ar[r] \ar[d] & \mf{Y}_{G} \ar[d]^{=}\\ \mf{X}_{G^*}  \ar[r]^{\cong} & \mf{Y}_{G^*}  }$$ 
$$\xymatrix{ \mf{Z}(G)  & \mf{Z}^{st}(G) \ar[l]\\ \mf{Z}(G^*)  \ar[u] & \mf{Z}^{st}(G^*) \ar[u]^{=} \ar[l]^{\cong}  }.$$  
The same diagrams also exist for the conjugacy class of a given Levi subgroup $M\subset G$ and its corresponding conjugacy class of Levi subgroup $M^*\subset G^*$, regarding $M^*$ as an inner form of $M$. We have already said that these are compatible, on the side of the usual Bernstein center, via the abstract constant term maps. They are also compatible with the stable Bernstein centers, via the analogous ``constant term'' map $\mf{Y}_M\to \mf{Y}_G$, and similarly for $G^*$, induced by taking a parameter $\lambda: W_F\to {^L}M$ to $\lambda:W_F\to {^L}M \subset {^L}G$. 

\subsection{The Geometric Bernstein Center} \label{sectionGBC}

Inspired by Corollary \ref{Z_V}, we study the relationship between the geometric Bernstein centers of $\GL_n(F)$ and its inner forms. Recall that this is the algebra generated by the $Z_V$, respectively, $Z_V^*$, inside the stable Bernstein centers of $G$ and $G^*$. There is also the ``$\Phi$-version'' generated by functions $Z_V^\Phi$ defined by $Z_V^{\Phi}(\pi)=\tr(\varphi_\pi(\Phi), V)$, and similarly $Z_V^{*, \Phi}$, which requires choosing a particular Frobenius $\Phi$. We denote these algebras by $\mf{Z}^{geom}(G)$ and $\mf{Z}_{\Phi}^{geom}(G)$, and similarly for $G^*$. Corollary \ref{Z_V} shows that the homomorphism $\mf{T}$ of Bernstein centers preserves both of these geometric Bernstein centers, and at the level of the stable Bernstein centers they are identified. If we transport them to the usual Bernstein centers, it is easy to see that one surjects onto the other. The map can be concretely regarded as restriction of $Z_V$, or $Z_V^{\Phi}$, to the image of $Q:\Pi(G)\hookrightarrow \Pi(G^*)$. 

We  address injectivity of $\mf{T}:\mf{Z}_\Phi^{geom}(G^*)\to \mf{Z}_\Phi^{geom}(G)$. Since $Z_{V\otimes W}^\Phi = Z_V^\Phi \cdot Z_W^\Phi$, the algebra generated by these functions is the same as the vector space they span. Consider a finite sum $\sum_i a_i Z^{*\Phi}_{V_i}$ where the $V_i$'s are irreducible representations. Applying $\mf{T}$ gives $\sum_i a_i Z^{\Phi}_{V_i}$. Convolving with $e_I$ gives an element $e_I * \sum_i a_i Z^{\Phi}_{V_i} \in Z(G, I)$. Then applying the Bernstein isomorphism (see $\S$6 in \cite{SBC}) $Z(G, I)\cong K_0(Rep(\hat{G}))$ gives $\sum a_i V_i$. But the $V_i$ are a basis. So injectivity of  $\mf{T}:\mf{Z}_\Phi^{geom}(G^*)\to \mf{Z}_\Phi^{geom}(G)$ follows.
 
This argument fails for $\mf{Z}^{geom}(G^*)\to \mf{Z}^{geom}(G)$ since it no longer suffices to consider elements of the form $\sum a_i Z_{V_i}$. 

\section{Combinatorial asides}\label{sectioncombasides}

This section will not be used elsewhere in the paper. Corollary \ref{Z_V} above can be given a concrete interpretation, when applied to the Iwahori block. If we rewrite $\C[\Z^n]^{S_n} = \C[t_1^{\pm1}, \ldots, t_n^{\pm 1}]^{S_n}$, then the normalized transfer map can be described as the algebra homomorphism which maps the elementary symmetric functions $$e_k(t_1, \ldots, t_n)\mapsto \sum\limits_{\alpha=(\alpha_1\geq \ldots \geq \alpha_k\geq 0, 0,\ldots)\in \Pi_k} \left[ \prod\limits_{i=1}^k \binom{d}{\alpha_i}_q \right] q^{\sum\limits_{i=1}^k (\alpha_i^2-d) / 2} m_\alpha(t_1, \ldots, t_r)$$ where we sum over the partitions $\Pi_k$ of $k$, the coefficients are the $q$-binomial coefficients 
  and $m_\alpha$ is the symmetric monomial function. 
   In particular, writing it this way makes it clear that symmetric invariants are indeed sent to symmetric invariants. It is less obvious that this is a surjective homomorphism, but this is also true as a consequence of the surjectivity of $\mf{T}$. This can also be written more simply in terms of power sum symmetric functions as $$p_k(t_1, \ldots, t_n) \mapsto \left(\frac{q^{dk/2} - q^{-dk/2} }{q^{k/2}-q^{-k/2} } \right)p_k(t_1, \ldots, t_r) $$ and from this perspective it is clearly surjective.



In general, still on the Iwahori component, if $\mu=(a_1\geq \ldots \geq a_n)$ is any cocharacter, then we have shown above, as a consequence of LLC+, that $\tilde{t}$ takes $z^*_{\mu}=\sum\limits_{\lambda\in S_n\cdot \mu} \lambda$ to 
$$z_{\mu} =\tr(\diag (\eta_1 q^{(d-1)/2}, \ldots, \eta_1 q^{(1-d)/2}, \ldots, \eta_r q^{(1-d)/2}), V_{\mu})$$ where we think of this as a (rational) function of the $\eta_i$. This then is the Schur polynomial 
$$S_\mu(q^{(d-1)/2} t_1, q^{(d-3)/2} t_1, \ldots, q^{(1-d)/2} t_1, q^{(d-1)/2} t_2, \ldots, q^{(1-d)/2} t_2, \ldots, q^{(1-d)/2} t_r).$$ where we write $t_i$ for $\eta_i$. 
We are getting a relationship between this ``renormalized'' $S_\mu$ and $S_\mu(t_1, \ldots, t_n)$. Explicitly, if we write $S_\mu(t_1, \ldots, t_n)=\sum\limits_T  t_1^{a_1}\cdots t_n^{a_n}$, where the summation is over all semistandard Young tableaux $T$ of shape $\lambda$ and $a_i$ counts the occurrences of the number $i$ in $T$, then $$S_\mu(q^{(d-1)/2} t_1, \ldots, q^{(1-d)/2} t_r)= \sum\limits_T q^{\vec{a}\cdot \vec{x}} \cdot \prod\limits_{j=1}^r t_j^{b_j}.$$ For example, $\mu=(k, 0, \ldots, 0)$ corresponds to a symmetric power of the standard representation and we get a (rather simpler) explicit formula. 



\section{Matching Distributions and Functions} \label{sectionmatching}

\subsection{Matching Distributions} \label{sectionmatchingdist}

In this section we will use Proposition \ref{Z_V} to give a means of generating new matching pairs of functions from given ones; see below for the definition of matching pair. {\it We assume for this section that $F$ has characteristic zero}. We constructed above a map of cocenters 
$$\overline{\mf{T}}: \HH(G^*)\to \HH(G)$$ characterized by $$\tr (\pi (\overline{\mf{T}}(\bar{f^*} ) ) )= \tr( JL(\pi) (\bar{f^*})) $$ for all irreducible representations $\pi$ of $G$. Note here we are using $JL$ and not $Q$. We also have a map of geometric Bernstein centers that sends $Z_V^*$ to $Z_V$, essentially by restricting to the image of the map $\mf{T}$, if viewed as regular functions on Bernstein varieties. Since we can also view $Z_V$ and $Z_V^*$ as essentially compact invariant distributions, we can convolve them with elements of the Hecke algebra (see \cite{SBC}). 


We recall some definitions. We say $\gamma^*\in G^*$ is 1) semisimple if its minimal polynomial is separable; 2) regular semisimple (which we will just call regular) if its characteristic polynomial is separable; 3) elliptic if its minimal polynomial is irreducible (and separable). 
For $G$ similar definitions exist in terms of the reduced characteristic and minimal polynomials. For each semisimple $\gamma\in G$ there is a corresponding conjugacy class in $G^*$. The image of this assignment restricted to regular elements in $G$ is those conjugacy classes in $G^*$ whose characteristic polynomials have all irreducible factors of degree divisible by $d$.. We write $\gamma\leftrightarrow \gamma^*$ for this correspondence of conjugacy classes. 

Now we require a comment on measures chosen on both $G$ and $G^*$, in order to properly define the orbital integrals. For the centralizers of $\gamma$ and $\gamma^*$, we have a natural notion of transference of measures, because if $\gamma\leftrightarrow \gamma^*$ then $G^*_{\gamma^*}$ is an inner form of $G_\gamma$. Explicitly, the measures are obtained by transporting invariant differentials of top degree from $G_{\gamma^*}^*$ to $G_\gamma$.  However, the measures chosen on the whole groups $G$ and $G^*$ are {\it not} related in this manner. Instead we choose $dg$ and $dg^*$ to be such that $dg(I) = 1 = dg^*(\GL_n(\Oh_F))$, as is done in \cite{KV}, where $I$ is an Iwahori subgroup of $G$. This is not the same thing as the notion of transference; for example in \cite{laumon} it is shown that if $dg$ and $dg^*$ are transfers, and $r=1$, then $\frac{dg^*(\GL_n(\Oh_F))}{dg(\Oh_D^\times)} = (q-1)(q^2-1)\cdots (q^{n-1}-1) \neq 1$.


\begin{definition} \label{matchingdef}
We say that $f^*\in \Hz(G^*)$ and $f\in \Hz(G)$ have matching orbital integrals (or are associated) if, for all semisimple $\gamma^*\in G^*$, we have $$O_{\gamma^*}(f^*) = \begin{cases}
e(G_\gamma) O_{\gamma}(f) \text{ if } \gamma\leftrightarrow \gamma^* \\
0 \text{                if there is no such } \gamma\in G 
\end{cases}$$ where $e(G_\gamma) = (-1)^{\rank_F (G^*_{\gamma^*}) - \rank_F(G_{\gamma}) }$. 
\end{definition}

If $\gamma\leftrightarrow \gamma^*$ then $G^*_{\gamma^*}$ is the quasisplit inner form of $G_\gamma$. See \cite{kottwitz1983sign} for background on the sign factor $e(G_\gamma)$. Note that if the elements that correspond are regular semisimple, then their centralizers are tori, identified with $F[\gamma]^\times$, and so no sign arises; this fact shall appear in the proof of Theorem \ref{MatchDist} below. However, signs do arise even for $\GL_2(F)$, by considering central elements. In general if considering $\GL_1(D)$ as an inner form of $\GL_n(F)$, then the relevant conjugacy classes of $G^*$ are precisely the elliptic ones. In the regular elliptic case no signs arise, and in the central case the sign is $(-1)^{n-1}$. We remark that we could have put a factor of $e(G^*_{\gamma^*})$ on the left-hand-side of the definition's equality above for symmetry, but since $G^*_{\gamma^*}$ is quasisplit (over $F$) for semisimple $\gamma^*\in \GL_n(F)$ this factor is always 1; $G^*_{\gamma^*}$ can be identified with a product of $\GL_k$'s over finite extensions of $F$. See, e.g., \cite[p.4]{AC}.


\begin{lemma} \label{WIF lemma}  If $f\in \Hz(G)$ and $f^*\in \Hz(G^*)$ are associated, then $\tr(f: \pi)= (-1)^{n-r} \tr(f^*: JL(\pi))$ for all tempered irreducible representations $\pi$ of $G=\GL_r(D)$. Further, $\tr(f^*: \pi^*)=0$ for all tempered irreducible $\pi^*\in R(G^*)$ that are not in the image of $JL: R(G)\to R(G^*)$.   
\begin{proof} Note that tempered irreducible representations of $G$ and $G^*$ can be written as parabolic inductions of essentially square-integrable representations; see for example \cite{tadic1990induced}. So if $\pi\in \Pi(G)$ is tempered, then $JL(\pi)$ is a tempered, irreducible representation as well, since $JL$ commutes with parabolic induction (\cite[Theorem 3.6.]{badulescu2007jacquet}). Furthermore, every tempered representation of $G^*$ so arises.
 By the Weyl Integration Formula,
 \begin{equation*}
 \begin{split}
 \tr (f^*:JL(\pi), dg)  &=  \int_{G^*} f^*(g) \Theta_{JL(\pi)}(g)dg \\
            &=\sum_{T/\sim} |W_T|^{-1} \int_{T} D_{G^*}(t)^2 \Theta_{JL(\pi)}(t) O_t^{G^*}(f^*, dg/dt) dt\\   
            & =  \sum_{T/\sim} |W_T|^{-1} \int_{T^{rs}} D_{G^*}(t)^2 \Theta_{JL(\pi)}(t) O_t^{G^*}(f^*, dg/dt) dt\\    
 \end{split}
 \end{equation*} 
  where we sum over conjugacy classes of maximal $F$-tori of $G^*$, and $T^{rs}\subset T$ denotes the regular semisimple subset. 
Observe that $O_t^{G^*}(f^*)=0$ for $t\in T^{rs}$, by assumption, unless $t$ (or rather its $G^*$ conjugacy class) comes from $G$, and that for such $t$ the centralizing torus $T$ also comes from $G$, so that it makes sense, and is true, that the above equals  
$$\sum_{T/\sim} |W_T|^{-1} \int_{T^{rs}} D_{G}(t)^2 (-1)^{n-r}\Theta_{\pi}(t) O_t^{G}(f, dg'/dt) dt$$ where now the sum and integral are over tori of $G$, and $dg'$ is the compatible measure as described above. Applying the Weyl Integration Formula again, one gets that this is equal to $(-1)^{n-r} \tr(f: \pi, dg')$. 

For the second statement, suppose that $\pi^*\in \Pi(G^*)$ is tempered. Then $\pi^*=i_{P^*}^{G^*}(\sigma)$ for some square-integrable $\sigma\in \Pi^2(M^*)$, where $M^*$ is a Levi factor of $P^*$. The assumption that $\pi^*$ is not in the image of $JL$ is equivalent to the conjugacy class of $M^*$ not coming from the inner form $G$. Because $f^*$, by assumption, is associated to some function on $G$, it follows that the orbital integrals of $f^*$ vanish at all (semisimple) conjugacy classes of $G^*$ which do not come from $G$. The vanishing statement is then (half of) the content of \cite[Lemma 3.3.]{apoitiers2003resultat}. 
\end{proof}
\end{lemma}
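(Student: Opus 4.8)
The plan is to prove \textbf{Lemma \ref{WIF lemma}} by unwinding both trace identities via the Weyl Integration Formula and exploiting the defining character identity for the Jacquet--Langlands correspondence, reducing matters to the hypothesis that $f$ and $f^*$ have matching orbital integrals. First I would observe that a tempered irreducible $\pi \in \Pi(G)$ is a full (irreducible) parabolic induction $\iota_P^G(\delta)$ of an essentially square-integrable $\delta \in \Pi^2(M)$, so that $JL(\pi) = \iota_{P^*}^{G^*}(JL(\delta))$ is again tempered irreducible because $JL$ commutes with parabolic induction by \cite[Theorem 3.6.]{badulescu2007jacquet}; moreover every tempered irreducible of $G^*$ arises this way precisely when the Levi $M^*$ transfers to $G$. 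I would then write out $\tr(f^* : JL(\pi), dg^*)$ via Weyl integration over conjugacy classes of maximal $F$-tori $T^*$ of $G^*$, restrict the integral to the regular semisimple locus $T^{*, rs}$ (the complement has measure zero), and use the matching hypothesis: $O_{t^*}^{G^*}(f^*) = 0$ unless $t^*$ comes from some $t \in G$, and for such regular semisimple $t$ the centralizer is a torus, so $e(G_\gamma)=1$ and there is no sign correction on orbital integrals. The only signs to track are the $(-1)^{n-r}$ from the character identity $\Theta_{JL(\pi)}(t^*) = (-1)^{n-r}\Theta_\pi(t)$ and the care needed to match the Weyl discriminants $D_{G^*}(t^*)^2 = D_G(t)^2$ and the Weyl group factors $|W_{T^*}| = |W_T|$ under the correspondence $t \leftrightarrow t^*$. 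Reassembling the surviving terms as a sum over tori of $G$ and applying the Weyl Integration Formula on $G$ backwards yields $(-1)^{n-r}\tr(f : \pi, dg')$, where $dg'$ is the transfer-compatible measure; the first assertion follows.

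For the second assertion, I would take a tempered irreducible $\pi^* \in \Pi(G^*)$ not in the image of $JL$, write $\pi^* = \iota_{P^*}^{G^*}(\sigma)$ with $\sigma \in \Pi^2(M^*)$, and note that $\pi^*$ lying outside $\im(JL)$ is equivalent to the conjugacy class of $M^*$ not transferring to $G$. Since $f^*$ is associated to a function on $G$, its orbital integrals vanish at all semisimple conjugacy classes of $G^*$ that do not come from $G$. The desired vanishing $\tr(f^* : \pi^*) = 0$ is then the assertion (one direction) of \cite[Lemma 3.3.]{apoitiers2003resultat}, which identifies the tempered representations whose traces are forced to vanish by such support conditions on orbital integrals with exactly those induced from non-transferring Levis.

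I expect the main obstacle to be the careful bookkeeping in the first step: making sure the measure normalizations line up (the transfer-compatible measures $dg/dt$ on $G^*$ versus $dg'/dt$ on $G$, together with the global normalizations $dg^*(\GL_n(\Oh_F)) = 1 = dg(I)$ discussed in \S\ref{sectionmatchingdist}), and verifying that the sign factor $(-1)^{n-r}$ is precisely the constant coming out of the Jacquet--Langlands character identity and that no further signs from discriminants or from $e(G_\gamma)$ intervene in the regular semisimple range. The representation-theoretic inputs — temperedness being preserved, full induction from square-integrables, compatibility of $JL$ with parabolic induction, and the image characterization in terms of transferring Levis — are all available from \cite{badulescu2007jacquet} and \cite{tadic1990induced}, so the content is really the Weyl integration manipulation plus the citation of \cite{apoitiers2003resultat} for the converse vanishing.
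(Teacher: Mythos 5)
Your proposal is correct and follows essentially the same route as the paper's proof: Weyl integration restricted to the regular semisimple locus, the matching hypothesis to kill non-transferring orbits, the Jacquet--Langlands character identity to convert $\Theta_{JL(\pi)}$ into $(-1)^{n-r}\Theta_\pi$, and reassembly via Weyl integration on $G$; the second assertion is handled identically by citing \cite[Lemma 3.3.]{apoitiers2003resultat}. The only substantive point both arguments implicitly rely on is that the character identity, stated for essentially square-integrable representations, extends to tempered ones via compatibility of characters with parabolic induction, which is standard.
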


\begin{theorem} \label{MatchDist}
Suppose $f$ and $f^*$ are associated, and $Z^*\in \mf{Z}(G^*)$. Then $Z^* * f^*$ and $\mf{T}(Z^*)   * f$ are also associated.
\end{theorem}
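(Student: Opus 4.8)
### Proof proposal

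\textbf{Overall strategy.} The plan is to reduce the equality of orbital integrals to an equality of traces against tempered representations, via the characterization of matching functions by their images in the cocenter. More precisely, by a theorem of Kazhdan the map $\overline{\Hz}(G)\hookrightarrow R(G)_\C^\vee$ is injective, and by results of Deligne--Kazhdan--Vigneras (and the density of tempered characters, or more directly the linear-independence of characters on the elliptic set) a function $g\in \Hz(G)$ has orbital integrals matching those of $g^*\in \Hz(G^*)$ if and only if the trace identity $\tr(g:\pi) = (-1)^{n-r}\tr(g^*:JL(\pi))$ holds for all tempered $\pi\in\Pi(G)$ \emph{and} $\tr(g^*:\pi^*)=0$ for all tempered $\pi^*$ not in the image of $JL$. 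This is the converse direction of Lemma \ref{WIF lemma} together with the vanishing criterion of \cite{apoitiers2003resultat}; I would cite it as the known characterization of matching in the Hecke algebra. So the whole theorem comes down to checking these two trace identities for the pair $(\mf{T}(Z^*)*f,\; Z^**f^*)$, given that they hold for $(f,f^*)$.

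\textbf{Key steps.} First I would fix a tempered irreducible $\pi\in\Pi(G)$ and compute $\tr(\mf{T}(Z^*)*f : \pi)$. Since $Z^*$ (hence $\mf{T}(Z^*)$) lies in the Bernstein center and acts on $\pi$ by a scalar $\mf{T}(Z^*)[\pi]$, this equals $\mf{T}(Z^*)[\pi]\cdot \tr(f:\pi)$; applying Lemma \ref{WIF lemma} to $f,f^*$ gives $\mf{T}(Z^*)[\pi]\cdot (-1)^{n-r}\tr(f^*:JL(\pi))$. On the other side, $\tr(Z^**f^*:JL(\pi)) = Z^*[JL(\pi)]\cdot\tr(f^*:JL(\pi))$, so it suffices to show the scalar identity
\begin{equation*}
\mf{T}(Z^*)[\pi] \;=\; Z^*[JL(\pi)] \qquad\text{for all tempered } \pi\in\Pi(G).
\end{equation*}
This is exactly the content of the characterization of $\mf{T}$ in Proposition \ref{5.1Prop}: $\mf{T}(Z^*)$ acts on $\iota_L^G(\tau)$ by the scalar with which $Z^*$ acts on $\iota_{L^*}^{G^*}JL(\tau)$, for $\tau$ supercuspidal on a Levi $L$. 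Writing the tempered $\pi$ as $\iota_L^G(\sigma)$ with $\sigma$ square-integrable and passing to the supercuspidal support $\tau$ of $\sigma$ (a Bernstein-center distribution acts by the same scalar on every irreducible with a given supercuspidal support, and $\iota^G$ does not change supercuspidal support), this reduces to knowing that $JL$ commutes with parabolic induction, \cite[Theorem 3.6.]{badulescu2007jacquet}, so that $JL(\pi)$ and $\iota_{L^*}^{G^*}JL(\tau)$ share a supercuspidal support, giving $Z^*[JL(\pi)] = Z^*[\iota_{L^*}^{G^*}JL(\tau)] = \mf{T}(Z^*)[\iota_L^G(\tau)] = \mf{T}(Z^*)[\pi]$. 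For the second required identity, let $\pi^*\in\Pi(G^*)$ be tempered and not in the image of $JL$; then $\pi^* = \iota_{M^*}^{G^*}(\sigma^*)$ with the conjugacy class of $M^*$ not transferring to $G$, and $Z^**f^*$ acts with trace $Z^*[\pi^*]\tr(f^*:\pi^*) = 0$ since $\tr(f^*:\pi^*)=0$ by the second half of Lemma \ref{WIF lemma}.

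\textbf{Main obstacle.} The one genuinely delicate point is the passage from "matching orbital integrals" to the pair of trace identities and back — i.e. making sure the equivalence I invoke is actually available at the level needed. The forward direction is Lemma \ref{WIF lemma}, but the converse (trace identities $\Rightarrow$ matching) requires both the injectivity of the cocenter in $R(G)_\C^\vee$ and the fact that the target function $Z^**f^*$ already has vanishing orbital integrals at the conjugacy classes that do not come from $G$ — this last is where the vanishing half of \cite[Lemma 3.3.]{apoitiers2003resultat} and the ellipticity/Weyl-integration bookkeeping of Lemma \ref{WIF lemma} get reused, now run in reverse. I would handle this by noting that $\mf{T}(Z^*)*f$ and $Z^**f^*$ each still lie in the respective Hecke algebras (Bernstein-center elements preserve $\Hz$ under convolution), so Lemma \ref{WIF lemma}'s proof applies verbatim with $f,f^*$ replaced by these convolutions once the scalar identity above is in hand; the only thing that must be checked separately is that $\mf{T}(Z^*)*f$ is a legitimate element of $\Hz(G)$ and that the convolution is compatible with the trace pairing, both of which are standard properties of the Bernstein center recalled in \cite{SBC}. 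Beyond that, everything is a scalar-bookkeeping exercise.
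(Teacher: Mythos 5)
Your proposal is correct and follows essentially the same route as the paper: reduce to regular semisimple classes, use the scalar identity $\mf{T}(Z^*)[\pi]=Z^*[JL(\pi)]$ together with Lemma \ref{WIF lemma} to get equality of tempered traces, and then recover matching of orbital integrals. The only presentational difference is that the paper makes your "delicate converse" step explicit by first invoking the Deligne--Kazhdan--Vigneras existence theorem to produce a transfer $\varphi^*$ of $\mf{T}(Z^*)*f$ and then applying Kazhdan's density theorem to identify the orbital integrals of $\varphi^*$ and $Z^**f^*$, which is exactly the assembly of ingredients you sketch.
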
 

\begin{proof}
By \cite[Proposition 2]{Kott}, it suffices to prove matching for regular semisimple orbital integrals, in which case the sign is always +1. When $char( F)=0$ it is shown in \cite[Theorem B.2.c.1]{bdkv1984repr} that for any 
$\varphi\in \Hz(G)$, there exists $\varphi^*\in \Hz(G^*)$ satisfying $O_{\gamma^*}(\varphi^*) = O_{\gamma}(\varphi)$ for corresponding regular semisimple elements, and $O_{\gamma^*}(\varphi^*) =0$ if $\gamma^*$ does not come from $G$. The corresponding result for $F$ having positive characteristic was proved by Badulescu in \cite[Theorem 3.2.]{apoitiers2003resultat}. We apply this to $\varphi=\mf{T}(Z^*)*f$. By Lemma \ref{WIF lemma} we know that $$\tr(\mf{T}(Z^*) * f: \pi)  = (-1)^{n-r} \tr(\varphi^*: JL(\pi))$$ for all tempered irreducible representations $\pi$ of $G$, 
 and similarly since $f$ and $f^*$ are associated, we have $\tr(f:\pi) =(-1)^{n-r} \tr(f^*: JL(\pi))$ for all tempered irreducible representations $\pi$ of $G$. So
 \begin{equation*}
\begin{split}
  \tr(\mf{T}(Z^*) * f: \pi)   &= (\mf{T}(Z^*))(\pi) \tr(f:\pi)  \\
            &= Z^* (JL(\pi) ) (-1)^{n-r}\tr(f^*: JL(\pi)) \\
            &= (-1)^{n-r}\tr(Z^* * f^* :JL(\pi))         
 \end{split}
 \end{equation*}
for all such representations $\pi$. Thus $Z^* * f^*$ and $\varphi^*$ have equal traces on all tempered irreducible representations of $G^*$: those which arise as $JL(\pi)$ for tempered $\pi\in \Pi(G)$, and those which do not (on which both have trace zero by Lemma \ref{WIF lemma}). By Kazhdan's density theorem (\cite{kazhdan1986cuspidal}), $Z^* * f^*$ and $\varphi^*$ have the same regular semisimple orbital integrals.
 \end{proof}

 \begin{cor} \label{matchdistcor}
 Suppose $f$ and $f^*$ are associated. Then $Z_V * f$ and $Z_V^* * f^*$ are also associated.
 \end{cor}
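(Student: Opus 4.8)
The plan is to derive Corollary \ref{matchdistcor} directly from Theorem \ref{MatchDist} together with Corollary \ref{Z_V}. The key observation is that $Z_V$ and $Z_V^*$ are elements of the Bernstein centers $\mf{Z}(G)$ and $\mf{Z}(G^*)$ — this is exactly the statement recorded in section \ref{sectionapplication}, where it is noted that the well-definedness of $Z_V$ as a Bernstein center element is a consequence of LLC+. So $Z_V^* \in \mf{Z}(G^*)$ is a legitimate choice for the distribution $Z^*$ appearing in the statement of Theorem \ref{MatchDist}.

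First I would recall that, by Corollary \ref{Z_V}, we have the identity $\mf{T}(Z_V^*) = Z_V$ in $\mf{Z}(G)$. Then I would simply apply Theorem \ref{MatchDist} with $Z^* = Z_V^*$: the theorem asserts that if $f$ and $f^*$ are associated, then $Z_V^* * f^*$ and $\mf{T}(Z_V^*) * f$ are associated. Substituting $\mf{T}(Z_V^*) = Z_V$ gives that $Z_V^* * f^*$ and $Z_V * f$ are associated, which is precisely the claim.

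There is really no obstacle here; the corollary is an immediate specialization. The only points worth spelling out are (a) that the convolution $Z_V * f$ makes sense, which follows from the fact recalled in section \ref{sectionmatchingdist} that $Z_V$ can be realized as an essentially compact invariant distribution and hence convolved with elements of $\Hz(G)$, and similarly for $Z_V^*$ on $G^*$; and (b) that we are implicitly working under the standing assumption $\operatorname{char}(F) = 0$ in force throughout section \ref{sectionmatchingdist}, which is what Theorem \ref{MatchDist} requires. With those understood, the proof is one line: apply Theorem \ref{MatchDist} to $Z^* = Z_V^*$ and invoke Corollary \ref{Z_V}.

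\begin{proof}
By the discussion in section \ref{sectionapplication}, $Z_V^*$ is an element of the Bernstein center $\mf{Z}(G^*)$. Applying Theorem \ref{MatchDist} with $Z^* = Z_V^*$, we conclude that $Z_V^* * f^*$ and $\mf{T}(Z_V^*) * f$ are associated. By Corollary \ref{Z_V}, $\mf{T}(Z_V^*) = Z_V$, so $Z_V^* * f^*$ and $Z_V * f$ are associated.
\end{proof}
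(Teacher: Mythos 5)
Your proof is correct and is exactly the intended derivation: the paper states the corollary as an immediate consequence of Theorem \ref{MatchDist} applied to $Z^* = Z_V^*$, using Corollary \ref{Z_V} to identify $\mf{T}(Z_V^*)$ with $Z_V$. Your added remarks about $Z_V$, $Z_V^*$ being essentially compact invariant distributions (so convolution makes sense) and the standing characteristic-zero hypothesis match the paper's own setup in section \ref{sectionmatchingdist}.
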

 
This establishes a special case of Conjecture 6.2.2. of \cite{SBC}. The above proof shows that $(-1)^{n-r}\overline{\mf{T}}$ can be used to generate (noncanonical) matching functions. However, as we will see below, this property cannot be lifted to a homomorphism of Hecke algebras.  The proof of this result is similar to the proof of the first part of Theorem C in \cite{scholze2010langlands}, though our functoriality is Jacquet-Langlands rather than Base Change. The second part of that theorem gives a matching statement between characteristic functions of congruence subgroups (in the sense of base change), and is false in our context, and we partially address this in the next section.

\subsection{Explicit Pairs of Matching Functions}\label{sectionmatchingfunction}

In this section we give some important explicit examples of matching functions, to which the previous theorem can then be applied. 

\subsubsection{Euler-Poincare (EP) functions} \label{sectionEPfunctions} Take $f$ and $f^*$ to be Kottwitz's Euler-Poincare functions on the groups $PGL_r(D)$ and $PGL_n(F)$, respectively. 
We can lift them to $G$ and $G^*$, and restrict to the kernels of the respective Kottwitz homomorphisms, $G^1$ and $G^{*,1}$, to get compactly supported functions on $G$ and $G^*$. 
We will simply refer to these as EP functions on $G$ and $G^*$. We make this a little more explicit. Let $K=\GL_n(\Oh_F)$ be the usual maximal compact subgroup of $G^*$, and let $dg^*$ be the Haar measure on $G^*$ such that $dg^*(K)=1$. From \cite[Ch.5]{laumon}, we have the following expression for the EP function on $\GL_n(F)$:
$$f^{EP} = \sum\limits_{I\subset \{1, \ldots, n-1 \}}\left( \frac{(-1)^{n-1-|I|}}{n-|I|} \right)e_{J_I} $$ where $e_{J_I} = \frac{1}{dg^*(J_I)} 1_{J_I} = [K:J_I]1_{J_I}$ and $J_I$ is the standard parahoric defined as follows: if 
$$\{1, \ldots, n-1\}\setminus I = \{d_1, d_1 + d_2, \ldots, d_1 + \ldots +d_{s-1} \}$$ and $\sum\limits_{i=1}^s d_i=n$ then $J_I$ corresponds to the partition $(d_1, \ldots, d_s)$. 

\begin{Remark}
This function is not defined exactly as Kottwitz's original one, but the orbital integrals are unaffected. See \cite[Lemma 5.2.2.]{laumon}.
\end{Remark}

The function $f^{EP}$ depends on a choice of measure $dg^*$, and we are choosing ours so that the maximal compact subgroup has volume 1. Similarly, if $\dim_F D = n^2$, then the EP function on $\GL_1(D)$ is $(vol(D^\times / F^\times, dg/dz))^{-1} 1_{\Oh_D^\times} = \frac{1}{n}1_{\Oh_D^\times}$, where $dg(\Oh_D^\times) =1$ and $dz(\Oh_F^\times)=1$. 

\begin{proposition}\label{EPmatch}
If $\GL_1(D)$ is an inner form of $\GL_n(F)$ then the functions $f^{EP}\in \Hz(G^*, I)$ and $\frac{1}{n} 1_{\Oh_D^\times}\in \Hz(D^\times, \Oh_D^\times)$ have matching orbital integrals. 
\end{proposition}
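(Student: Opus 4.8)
The plan is to compare orbital integrals of the two functions on regular semisimple (equivalently here, regular elliptic) conjugacy classes, since for $\GL_1(D)$ inside $\GL_n(F)$ the conjugacy classes of $G^*$ that come from $G$ are exactly the elliptic ones, and on the regular elliptic classes no sign factors intervene. By \cite[Proposition 2]{Kott} it suffices to match regular semisimple orbital integrals, so I reduce to showing $O_\gamma(\tfrac1n 1_{\Oh_D^\times}) = O_{\gamma^*}(f^{EP})$ for corresponding regular elliptic $\gamma \leftrightarrow \gamma^*$, and $O_{\gamma^*}(f^{EP}) = 0$ when $\gamma^*$ is regular semisimple but not elliptic (hence not coming from $D^\times$).

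The vanishing statement is the easy half: if $\gamma^*$ is regular semisimple and not elliptic, its centralizer $T^* = F[\gamma^*]^\times$ is a torus that is a product of more than one factor, hence splits over a proper parabolic; Kottwitz's EP function, being a signed combination of normalized parahoric idempotents, has vanishing orbital integrals at such non-elliptic classes — this is the standard property of EP functions (their orbital integrals are supported on the elliptic set), so I would cite \cite{laumon} or \cite{Kott} for it. For the main equality, I would compute both sides explicitly. On the division algebra side, $\frac1n 1_{\Oh_D^\times}$ is supported on $\Oh_D^\times$, and for regular elliptic $\gamma$ the centralizer $G_\gamma = F[\gamma]^\times = E^\times$ for a degree-$n$ field extension $E/F$; choosing the transferred measures, $O_\gamma(\tfrac1n 1_{\Oh_D^\times})$ reduces to a lattice-point count (essentially $\mathrm{vol}(\Oh_D^\times/\Oh_E^\times)$ up to the normalizing $\tfrac1n$, which is $\mathrm{vol}(D^\times/F^\times)^{-1}$), provided $\gamma$ is $\Oh_D^\times$-conjugate into $\Oh_D^\times$, i.e. $E/F$ and the embedding are unramified-compatible; in general one sums over the relevant double cosets. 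On the $\GL_n(F)$ side, $O_{\gamma^*}(f^{EP})$ is the signed sum $\sum_I \frac{(-1)^{n-1-|I|}}{n-|I|} O_{\gamma^*}(e_{J_I})$, and each $O_{\gamma^*}(e_{J_I})$ counts (with the volume normalization) the $\gamma^*$-fixed points in the partial flag variety / the $J_I$-double cosets containing $\gamma^*$. The key identity is then a combinatorial one: the alternating sum over parahorics $J_I$ collapses, by an Euler-characteristic computation on the Bruhat-Tits building of $PGL_n$ (the building is contractible, or more precisely the fixed-point set of the elliptic torus $T^*$ is a single point), to exactly the lattice count appearing on the $D^\times$ side. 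This is precisely the phenomenon that makes Kottwitz's EP function transfer to $\tfrac1n 1_{\Oh_D^\times}$, and the cleanest route is to invoke the Euler-Poincar\'e / Lefschetz-type formula: $O_{\gamma^*}(f^{EP})$ equals the Euler characteristic (with the $sgn$ twist, which is trivial on $T^*$ since $T^*$ acts trivially on the relevant facets up to the $PGL$ quotient) of the fixed-point building of $T^*$, which is a point, giving the value $1$ in the appropriate normalization — and one checks this matches $O_\gamma(\tfrac1n 1_{\Oh_D^\times})$ after accounting for the chosen (non-transferred) global measures $dg(\Oh_D^\times)=1=dg^*(K)$.

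The main obstacle I anticipate is bookkeeping of measures: the functions are normalized with $dg(\Oh_D^\times) = 1 = dg^*(\GL_n(\Oh_F))$, which are \emph{not} transferred measures (as the excerpt emphasizes, citing \cite{laumon}), while the definition of matching orbital integrals uses transferred measures on the centralizers. So I must carefully track the ratio of measures on $T^* = E^\times$ coming from the two sides and confirm it is absorbed correctly — in particular that the factor $\tfrac1n = \mathrm{vol}(D^\times/F^\times, dg/dz)^{-1}$ is exactly what compensates for the normalization mismatch so that the final equality holds on the nose. A secondary technical point is ensuring the comparison is uniform over \emph{all} regular elliptic $\gamma^*$, including those where $E/F$ is ramified, so that the lattice count and the building Euler-characteristic genuinely agree in every case; here I would lean on the building being contractible (so the alternating sum of parahoric contributions is insensitive to the ramification, always yielding the single-fixed-point answer) rather than attempting a case analysis. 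Once these normalizations are pinned down, the matching follows, and this also gives the base case $G = \GL_1(D)$ of the more general Theorem \ref{defofF_J} on parahoric units.
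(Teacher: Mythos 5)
Your proposal is correct and follows essentially the same route as the paper: the $\GL_n(F)$-side orbital integrals of $f^{EP}$ come from the known Euler--Poincar\'e computation (the paper simply cites \cite[Theorem 5.1.3]{laumon}, whose content is the building/fixed-point argument you sketch), and these are compared with a direct computation on the $D^\times$ side. The one simplification you miss is that $\Oh_D^\times$ is \emph{normal} in $D^\times$, so that $O_\gamma(\tfrac{1}{n}1_{\Oh_D^\times})$ is literally $\tfrac{1}{n}\Vol(D^\times/G_\gamma, dg/dg_\gamma)$ when $|Nrd(\gamma)|_F=1$ and $0$ otherwise --- no double-coset sum, no lattice count, and no case analysis over the ramification of $F[\gamma]/F$ --- which disposes of the ``main obstacle'' you anticipate and makes the final comparison a direct check against Laumon's formula.
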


\begin{proof}
Since the conjugacy classes in $G^*$ that correspond to those in $D^\times$ are precisely the elliptic ones, this proposition is largely a consequence of \cite[Theorem 5.1.3.]{laumon}, which computes the orbital integrals of $f^{EP}$. Since $\Oh_D^\times$ is normal in $D^\times$, computing the orbital integrals of $\frac{1}{n}1_{\Oh_D^\times}$ is trivial: $$O_{\gamma}\left( \frac{1}{n} 1_{\Oh_D^\times}\right) = \begin{cases}
\frac{1}{n}vol (D^\times/ G_\gamma, dg/dg_\gamma) \text{ if } \nu(\gamma)=1 \\
0 \text{                otherwise } 
\end{cases}$$

Comparing with the result of \cite[Theorem 5.1.3]{laumon}, one checks directly that the matching condition holds. \end{proof}

\begin{Remark}
Proposition \ref{EPmatch} can also be regarded as a special case of \cite[Theorem 2]{Kott}, at least when $F$ has characteristic zero. 
\end{Remark}






\subsubsection{Matching for characteristic functions of parahoric subgroups of $\GL_r(D)$} \label{sectionmatchingfor1J} For a parahoric subgroup $J\subset G=\GL_r(D)$, we will construct an explicit function in $\mc{H}(G^*, I^*)$ associated to the characteristic function $1_J\in \Hz(G, J)$. This will generalize Proposition \ref{EPmatch}. To do so we first need to recall a few facts about Deligne-Lusztig functions. To each character $\theta: T\to \C^\times$ of a maximal torus in $T\subset H(\FF_q)$, where $H$ is a connected reductive group over $\FF_q$, we can construct a virtual character $R^H_{T}( \theta)$ of $H(\FF_q)$, by the general method of \cite{deligne1976representations}. For us the ambient group $H$ will be a product of general linear groups (and restrictions of scalars thereof), and we will only need this construction for the case where the character $\theta$ is trivial. 

Recall that conjugacy classes of tori in $\GL_d(\FF_q)$ correspond bijectively to the Weyl group $W=S_d$; we write $T_w$ for any torus in the conjugacy class corresponding to $w\in W$. Write $\overline{G}:=\GL_d(\FF_q)$, and for $w\in S_d$, write $R_w:=R_{T_w}^{\overline{G}}(1)$ for the Deligne-Lusztig function on $\overline{G}$ associated to the trivial character of $T_w$. We will also regard $R_w$ as a function on $K=\GL_d(\Oh_F)$ via inflation, or on $\GL_d(F)$ by extension by zero, since its meaning will always be clear from context.


We consider first the case of $\GL_1(D)$. Here the unique parahoric is $I=\Oh_D^\times$. Let $1_I$ denote the unit element of the Iwahori-Hecke algebra of $\GL_1(D)$, an inner form of $\GL_d(F)$. We now recall a (very) special case Theorem 2.2.6. of \cite{KV}. \begin{theorem}[Kazhdan-Varshavsky] \label{KVtheorem}
Suppose that $F$ is characteristic zero, and suppose that $p>d$, where $p$ is the residue characteristic of $F$. If $w=(12\cdots d)$ then $1_I\in \Hz (\GL_1(D) )$ and $R_w\subset \Hz(\GL_d(F))$ are matching functions. 
 \end{theorem}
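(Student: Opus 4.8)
The plan is to reduce the general claim to this special case by an explicit ``parahoric induction'' construction, so the first task is simply to isolate and apply Theorem \ref{KVtheorem} as a black box. The statement to be proved is itself Theorem 2.2.6 of \cite{KV} specialized to $\GL_1(D)$ as an inner form of $\GL_d(F)$, to the trivial character, and to the Coxeter element $w=(12\cdots d)$, whose associated torus $T_w\subset\GL_d(\FF_q)$ is the norm-one torus of the unramified degree-$d$ extension; the corresponding Deligne--Lusztig function $R_w$ is, up to sign, the inflation to $K=\GL_d(\Oh_F)$ of the character of the ``duality-twisted Steinberg-type'' virtual representation attached to that elliptic torus. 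The first step I would take is to recall precisely how $R_w$ is normalized as an element of $\Hz(\GL_d(F))$ (the measure convention $dg^*(K)=1$ from \S\ref{sectionmatching}, and extension by zero off $K$), so that the identity $O_{\gamma^*}(R_w)=e(G_\gamma)O_\gamma(1_I)$ of Definition \ref{matchingdef} is being asserted with the same Haar measures used throughout the paper.

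The core of the argument is the character-theoretic identity underlying \cite{KV}: for $\GL_1(D)$ the Iwahori $I=\Oh_D^\times$ is normal in $\Oh_D^\times$ modulo center issues only through the reduced norm, so $1_I$ is essentially a function on the compact group $\Oh_D^\times/(1+\varpi_D\Oh_D)\cong \FF_{q^d}^\times$ pulled back appropriately; its (twisted) orbital integrals at elliptic regular semisimple $\gamma$ reduce to counting fixed points in the building, i.e.\ to a Bruhat–Tits computation. On the quasi-split side, the orbital integrals of the Deligne--Lusztig function $R_w$ at regular semisimple $\gamma^*$ are computed by Kazhdan–Varshavsky via a Lefschetz fixed-point / Deligne--Lusztig induction argument: $O_{\gamma^*}(R_w)$ is expressed through the Green function values $Q^{\overline{G}}_{T_w}(\bar\gamma^*)$ at the reduction $\bar\gamma^*$, and these match exactly the fixed-point counts on the $\GL_1(D)$-side precisely when the characteristic polynomial of $\gamma^*$ is irreducible of degree $d$ (the elliptic condition), with vanishing otherwise. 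The hypotheses $\mathrm{char}(F)=0$ and $p>d$ are what guarantee that topological Jordan decomposition behaves well and that the relevant reductive quotients are of the expected type, so that the Deligne--Lusztig machinery over $\FF_q$ applies without pathology; I would cite \cite{KV} for this and not reprove it.

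Concretely the steps, in order, are: (1) recall from \cite{badulescu2007jacquet} (or directly) that conjugacy classes of $\GL_d(F)$ meeting the image of $\GL_1(D)^\times$ are exactly those with irreducible characteristic polynomial of degree $d$, so the matching condition of Definition \ref{matchingdef} needs to be checked only there and vanishing checked elsewhere; (2) for such elliptic $\gamma^*\leftrightarrow\gamma$, note that $e(G_\gamma)=+1$ since the centralizers are the anisotropic-mod-center torus $F[\gamma]^\times$ on both sides, so no sign intervenes; (3) invoke \cite[Theorem 2.2.6]{KV} to identify $O_{\gamma^*}(R_w)$ with $O_\gamma(1_I)$ for these classes and with $0$ for non-elliptic semisimple classes of $\GL_d(F)$; (4) assemble these into the matching statement of Definition \ref{matchingdef}. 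The only genuine obstacle is bookkeeping of normalizations — signs in the definition of $R_w$, the precise Coxeter element and its torus, and the Haar measure conventions — so that the cited theorem is applied with the correct constants; the deep analytic content (the orbital integral computation of $R_w$) is entirely imported from \cite{KV}.
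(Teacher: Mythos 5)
Your proposal --- specialize \cite[Theorem 2.2.6]{KV} to $\GL_1(D)$, the trivial character and the Coxeter element, then check measures, the elliptic/non-elliptic dichotomy, and the sign --- is essentially how the paper \emph{introduces} this statement: it is explicitly presented as ``a (very) special case of Theorem 2.2.6 of \cite{KV}'', so importing the orbital-integral computation from there is legitimate, and is what the attribution in the theorem's name signals. The paper, however, immediately follows the citation with an \emph{alternative} proof whose point is to remove the hypotheses $\mathrm{char}(F)=0$ and $p>d$: Proposition \ref{EPmatch} shows (via Laumon's computation of the orbital integrals of the Euler--Poincar\'e function) that $f^{EP}$ on $\GL_d(F)$ matches $\tfrac1d 1_{\Oh_D^\times}$, and Propositions \ref{Comb prop} and \ref{gen prop} establish the purely combinatorial identity $d\cdot O_\gamma(f^{EP})=O_\gamma(R_w)$ by expanding $R_w$ as an alternating sum of inductions of trivial characters from parabolic subgroups of $\GL_d(\FF_q)$ and using a vanishing statement from \cite{laumon} for non-Coxeter classes in $S_d$. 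The two routes buy different things: yours is shorter but inherits the restrictions on $F$ and $p$ from \cite{KV}; the paper's is unconditional and sets up machinery (Proposition \ref{gen prop}, the functions $f_{S_{M^*}}$) reused later for general parahorics.

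Two corrections to your write-up. The torus attached to $w=(12\cdots d)$ in $\GL_d(\FF_q)$ is $\FF_{q^d}^\times$, the full multiplicative group of the degree-$d$ extension, not the norm-one torus; and the orbital integrals here are ordinary, not twisted. More substantively, your steps (1)--(2) only treat \emph{regular} elliptic classes, whereas Definition \ref{matchingdef} quantifies over all semisimple $\gamma^*$, and non-regular elliptic classes do transfer (e.g.\ central elements, where $e(G_\gamma)=(-1)^{d-1}$). You should either verify matching on those classes as well or invoke \cite[Proposition 2]{Kott}, as the paper does in the proof of Theorem \ref{MatchDist}, to reduce to the regular semisimple case.
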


    
We will give an alternative proof of Theorem \ref{KVtheorem}, removing these restrictions on the characteristic of $F$, and on $p$. Comparing with the Proposition \ref{EPmatch}, we expect to have a relation between EP and Deligne-Lusztig functions in this context. 

\begin{proposition} \label{EPandKV}
We have, with no restriction on $F$, the equality $$d\cdot O_\gamma(f^{EP}_{\GL_d(F)}) = O_\gamma(R_w)$$ for all $\gamma\in \GL_d(F)$, if $w=(1 2\cdots d)$. 
\end{proposition}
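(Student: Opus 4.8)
The plan is to compare the orbital integrals of the two functions on each type of element $\gamma \in \GL_d(F)$, using the orbital integral formula for $f^{EP}_{\GL_d(F)}$ from \cite[Theorem 5.1.3]{laumon} together with the character-theoretic description of Deligne--Lusztig functions. Since both $f^{EP}$ and $R_w$ are supported on $\GL_d(\Oh_F)$ (up to the extension-by-zero convention) and are bi-$I^*$-invariant resp.\ $K$-invariant, and since orbital integrals of such functions are governed by their restriction to topologically unipotent-times-elliptic parts, I would first reduce to the case where $\gamma$ is compact. For $\gamma$ that is not elliptic (i.e.\ $F[\gamma]$ is not a field), $1_I \in \Hz(\GL_1(D))$ has vanishing orbital integral by Proposition~\ref{EPmatch}, and correspondingly one expects both sides here to vanish or to reduce to a lower-rank comparison; the substantive content is the elliptic (and central) case. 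So the first step is this reduction, isolating the compact elliptic elements and central elements as the cases to check.

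The key computation is then: for $\gamma \in \GL_d(F)$ regular elliptic and compact, $O_\gamma(R_w)$ can be evaluated via the fact that $w = (12\cdots d)$ is the Coxeter element, so $T_w$ is the (non-split) maximal torus of $\GL_d(\FF_q)$ whose $\FF_q$-points form the norm-one-adjusted multiplicative group of $\FF_{q^d}$; the Deligne--Lusztig induction $R_w$ evaluated at an elliptic element is essentially a Green-function/character value, and for the trivial character $\theta = 1$ one has explicit formulas (the value at a regular elliptic element reduces, via Deligne--Lusztig theory, to $\pm 1$ or to a controlled expression). On the other side, \cite[Theorem 5.1.3]{laumon} gives $O_\gamma(f^{EP})$ in closed form. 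The heart of the proof is to check that these two closed forms agree after multiplication by $d$, for each conjugacy class; I would organize this by the reduction type of $\gamma$, i.e.\ by the factorization of its characteristic polynomial over $\Oh_F$, paralleling the structure of the formula in \cite{laumon}. For central $\gamma$ with $\nu(\gamma)$ a unit, both sides are (nonzero) multiples of a volume and one checks the constant $d$ matches; for central $\gamma$ outside $\GL_d(F)^1$ both vanish.

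An alternative, possibly cleaner route I would keep in mind: rather than computing both sides from scratch, use Proposition~\ref{EPmatch} (which says $f^{EP}$ matches $\tfrac1d 1_{\Oh_D^\times}$) together with Theorem~\ref{KVtheorem} of Kazhdan--Varshavsky (which says $R_w$ matches $1_I = 1_{\Oh_D^\times}$), at least in the range where the latter applies ($\mathrm{char}\,F = 0$, $p > d$). In that range, $d \cdot f^{EP}$ and $R_w$ both match $1_{\Oh_D^\times}$, hence have equal orbital integrals at \emph{elliptic} elements; since both are supported (on the $\GL_d(F)$ side) in a way that forces vanishing of orbital integrals at non-elliptic elements exactly when the $\GL_1(D)$ partner does, the identity $d\cdot O_\gamma(f^{EP}) = O_\gamma(R_w)$ follows for all $\gamma$. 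To get the statement with \emph{no restriction on $F$}, one then either (a) invokes the characteristic-zero case plus a closure/density argument to transfer the equality of orbital-integral formulas (which are polynomial/rational in $q$) to all local fields, or (b) simply observes that the explicit formulas from \cite{laumon} and the explicit Deligne--Lusztig character values are visibly independent of $\mathrm{char}\,F$ and of the relation between $p$ and $d$ once written out. The main obstacle I anticipate is bookkeeping: correctly matching the combinatorial indexing in Laumon's orbital integral formula (sums over subsets $I \subset \{1,\dots,n-1\}$, parahoric types) against the torus/Weyl-element indexing in Deligne--Lusztig theory, and handling the ramified (non-regular, non-semisimple compact) elements where both the $EP$ side and the Green-function side require care.
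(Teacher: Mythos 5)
Your proposal does not contain a proof; both routes you sketch stop short of the actual argument, and the second is circular in context. In the first route, the ``heart of the proof'' --- verifying that Laumon's closed form for $O_\gamma(f^{EP})$ agrees with the Deligne--Lusztig character values of $R_w$, conjugacy class by conjugacy class --- is exactly the content of the proposition, and you leave it undone (you yourself flag the bookkeeping and the non-semisimple/Green-function cases as the ``main obstacle''). In the second route, invoking Theorem \ref{KVtheorem} is circular for the purpose at hand: the point of Proposition \ref{EPandKV} in the paper is precisely to combine with Proposition \ref{EPmatch} to \emph{reprove} the Kazhdan--Varshavsky matching and remove the hypotheses $\mathrm{char}\,F=0$, $p>d$. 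Even taken on its own terms, that route only yields equality of orbital integrals at semisimple elements (matching is a condition on semisimple classes), whereas the proposition asserts the identity for \emph{all} $\gamma\in\GL_d(F)$, and the passage to arbitrary $F$ via ``density'' or ``polynomiality in $q$'' is asserted rather than proved.

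The paper's proof is different in kind and avoids every computation you anticipate. It rests on two inputs: Proposition \ref{gen prop}, a general lemma saying that for a class function $f$ on $J/K_1$ inflated to a parahoric $J\subset K$ one has $[K:J]\,O_\gamma(f)=O_\gamma\bigl(\mathrm{Ind}_{J/K_1}^{K/K_1}(f)\bigr)$ for all $\gamma$; and Proposition \ref{Comb prop}, a purely finite-group identity of virtual characters on $\GL_d(\FF_q)$,
$$R_y= d \sum_{I\subset \{1, \ldots, d-1\}} \frac{(-1)^{d-1-|I|}}{d-|I|}\, \mathrm{Ind}_{J_I/K_1}^{\GL_d(\FF_q)}(1), \qquad y=(12\cdots d),$$
proved by computing the coefficient of each $R_g$ via the formulas $1_J=\tfrac{1}{|W_L|}\sum_w R_{T_w}^L(1)$ and $\mathrm{Ind}(1)=\tfrac{1}{|W_L|}\sum_w R_{T_w}^{\overline G}(1)$ and a combinatorial vanishing statement for non-Coxeter classes. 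Applying Proposition \ref{gen prop} to each $e_{J_I}$ appearing in $f^{EP}$ and then substituting the character identity gives $d\cdot O_\gamma(f^{EP})=O_\gamma(R_w)$ for every $\gamma$, with no hypotheses on $F$, because nothing in the argument ever touches the arithmetic of $F$. If you want to salvage your approach, you would need to either carry out the full case-by-case comparison of closed formulas, or better, locate and prove the finite-group identity that makes the comparison unnecessary.
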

 This will be a consequence of the following Propositions \ref{Comb prop} and \ref{gen prop}. Propositions \ref{EPmatch} and \ref{EPandKV} show that Theorem \ref{KVtheorem} holds with no restriction on the residue characteristic $p$, or on the characteristic of $F$. We will shortly use a similar approach to address the case of general $\GL_r(D)$, though in general we will still have this restriction on $F$. 

\begin{Remark}
The functions $1_I$ and $1_{I^*}$ do not match, as $O_\gamma(1_{I^*})$ both fails to vanish when it should (at nonelliptic regular semisimple $\gamma$), and vanishes when it should not (at elliptic regular $\gamma$). Indeed $1_{I^*}$ does not match any $f\in \Hz(G)$ for this reason. In particular, the normalized transfer homomorphism $\tilde{t}$ above does not generate matching pairs, unlike the base change homomorphism. Nevertheless, Theorem \ref{MatchDist} shows that the analogous result is true at the level of distributions in the Bernstein center. 
\end{Remark} 

We write $K_1$ for the prounipotent radical of $K$, and identify $J_I/K_1$ with a standard parabolic subgroup of block upper-triangular matrices in $K/K_1=\GL_d(\FF_q)$. We recall a few notions and notations from the representation theory of finite groups. If $f$ is a class function on the parabolic subgroup $J_I/K_1$, we write $Ind_{J_I/K_1}^{\GL_d(\FF_q)}(f)$ for the induced class function, defined by $x\mapsto \sum\limits_{s\in S} f(s^{-1}xs)$, where $S$ is a set of left coset representatives for $J_I/K_1$ in $\GL_d(\FF_q)$, and where the summand is by definition zero if $s^{-1}xs\not\in J_I/K_1$.  

\begin{proposition} \label{Comb prop}
Let $y=(12\cdots d)\in S_d=:W$. Then we have
\begin{eqnarray} \label{eqncombprop}
R_y= d\left( \sum\limits_{I\subset \{1, \ldots, d-1  \}} \frac{(-1)^{d-1-|I|}}{d-|I|} Ind_{J_I/K_1}^{\GL_d(\FF_q)} (1)\right).
\end{eqnarray}

\end{proposition}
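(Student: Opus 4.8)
The plan is to compute both sides of (\ref{eqncombprop}) as virtual characters of $\GL_d(\FF_q)$ and observe they agree. The key identity from Deligne--Lusztig theory for the Coxeter element $y=(12\cdots d)$ is that $R_y$ can be rewritten in terms of the \emph{permutation characters} $\Ind_{J_I/K_1}^{\GL_d(\FF_q)}(1)$ of the parabolic subgroups, via an inclusion-exclusion over standard parabolics. Recall that for a general $w\in S_d$ one has the classical formula expressing the characteristic function of $R_{T_w}^{\overline G}(1)$ against the permutation module of the Borel, and more generally an expansion of any $R_w$ in terms of the $R_{w'}$ with $w'$ ranging over a parabolic subgroup together with the sign character; equivalently, there is a well-known relation (going back to work on the cohomology of Deligne--Lusztig varieties, or the Solomon--Tits / Steinberg module picture) between $R_{\mathrm{Cox}}(1)$ and the alternating sum $\sum_{I}(-1)^{\dots}\Ind_{P_I}^{\GL_d}(1)$.

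Concretely, first I would recall that $\Ind_{B}^{\GL_d(\FF_q)}(1) = \sum_{w\in S_d} R_{T_w}^{\overline G}(1)$ up to the appropriate normalization — more precisely the standard fact that $R_{T_w}^{\overline G}(1)$, summed with coefficients given by a Poincar\'e polynomial, reconstructs $\Ind_B^{\GL_d}(1)$, and dually each $\Ind_{P_I}^{\GL_d}(1)$ is a sum of $R_{T_w}^{\overline G}(1)$ over $w$ in the parabolic subgroup $S_{d_1}\times\cdots\times S_{d_s}$ determined by $I$. Then the alternating sum $\sum_I \frac{(-1)^{d-1-|I|}}{d-|I|}\Ind_{P_I}^{\GL_d}(1)$ becomes an alternating sum over pairs (parabolic subset $I$, element $w$ in the corresponding Young subgroup), and I would swap the order of summation to collect the coefficient of each fixed $R_{T_w}^{\overline G}(1)$. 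The combinatorial claim is then that this coefficient is $\frac{1}{d}$ precisely when $w$ is a $d$-cycle (and the $d$-cycles form a single conjugacy class, all giving the torus $T_y$) and $0$ otherwise; the factor $d$ on the right of (\ref{eqncombprop}) is exactly the number of $d$-cycles divided by... no — rather, the normalizing constant that turns the coefficient $\frac1d$ into $1$. This reduces the proposition to a purely combinatorial lemma about the poset of subsets of $\{1,\dots,d-1\}$: for a given $w\in S_d$ with cycle type $\lambda$, $\sum_{I : w\in S_{d(I)}} \frac{(-1)^{d-1-|I|}}{d-|I|}$ equals $\frac1d$ if $\lambda=(d)$ and $0$ otherwise. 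I expect to prove this by the standard identity $\sum_{k=1}^{m}\binom{m-1}{k-1}\frac{(-1)^{m-k}}{k}=\frac1m$ together with an observation that if $w$ is not a $d$-cycle then the relevant sum factors over the blocks of a finer composition and telescopes to zero.

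The main obstacle will be pinning down the precise normalizations in the Deligne--Lusztig expansion: the exact relationship between $\Ind_{P_I}^{\GL_d(\FF_q)}(1)$ and the $R_{T_w}^{\overline G}(1)$, including which Poincar\'e-polynomial weights appear, and making sure the $q$-powers do not interfere — since the final statement has no $q$ in it, all such weights must cancel, and verifying that cancellation cleanly is the delicate point. An alternative, perhaps cleaner, route is to avoid expanding $R_y$ at all and instead use the character formula of Deligne--Lusztig directly: evaluate both sides of (\ref{eqncombprop}) on an arbitrary element $g\in\GL_d(\FF_q)$, using that $R_y(g)$ is computable via Green's formula / the known character values of $R_{T_y}^{\overline G}(1)$ at semisimple and unipotent parts, and that $\Ind_{P_I}^{\GL_d}(1)(g)$ counts fixed points on $\GL_d/P_I$, i.e.\ is a count of flags of the prescribed type stabilized by $g$. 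The identity then becomes an Euler-characteristic statement about the subcomplex of the flag complex fixed by $g$, which is contractible (or has controlled reduced homology) unless $g$ acts ``irreducibly'' — this is essentially the Solomon--Tits theorem applied to fixed points. I would likely present whichever of these two arguments the authors find shortest; my guess is that the combinatorial-identity route is what is intended, with Propositions \ref{Comb prop} and \ref{gen prop} together feeding into Proposition \ref{EPandKV}.
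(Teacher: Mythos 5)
Your overall strategy is the one the paper uses: expand each permutation character via Carter's Proposition 7.4.4, which gives exactly $\ind_{J_I/K_1}^{\GL_d(\FF_q)}(1)=\frac{1}{|W_I|}\sum_{w\in W_I}R_{T_w}^{\overline{G}}(1)$ (no Poincar\'e-polynomial or $q$-weights appear, so the cancellation you worry about is a non-issue), then swap sums and show the coefficient of the Coxeter class is $1/d$ while all other coefficients vanish. However, your reduction to a combinatorial lemma is mis-stated. The coefficient of $R_{T_{[g]}}$ in the right-hand side is
$$\sum_{I\subset\{1,\dots,d-1\}}\frac{(-1)^{d-1-|I|}}{d-|I|}\cdot\frac{|[g]\cap W_I|}{|W_I|},$$
where $[g]$ is the $S_d$-conjugacy class of $g$; this is not $\sum_{I:\,g\in W_I}\frac{(-1)^{d-1-|I|}}{d-|I|}$. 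Indeed, for a $d$-cycle the latter sum has a single term ($I=\{1,\dots,d-1\}$) and equals $1$, not $1/d$; the $1/d$ comes from the weight $|[g]\cap W_I|/|W_I|=(d-1)!/d!$, not from the alternating sum. So the lemma you propose to prove is not the quantity you need, and as stated it is false in the elliptic case.

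The second, more substantive issue is the vanishing for non-elliptic $g$, which you dispose of with ``factors over the blocks and telescopes to zero.'' This is the real content of the proposition and does not telescope in any obvious way, because the sum is over all $I$ meeting $[g]$ (weighted by $|[g]\cap W_I|/|W_I|$), not over $I$ containing a fixed representative. The paper handles it by recasting the coefficient as a Weyl-group ``orbital integral'' $O_g(f)$ of the $1$-adic Euler--Poincar\'e function $f=\sum_I\frac{(-1)^{d-1-|I|}}{d-|I|}\frac{1}{|W_I|}1_{W_I}$, choosing a proper Young subgroup $W_M$ with $g\in W_M$, decomposing along double cosets $W_M\backslash W/W_I$ using $W_M\cap wW_Iw^{-1}=W_{\Delta^M\cap w(I)}$, and reducing to the combinatorial identity of Laumon (Proposition 5.5.5 of his book), which is precisely the nonelliptic-vanishing input for Euler--Poincar\'e functions. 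Your elementary identity $\sum_{k=1}^m\binom{m-1}{k-1}\frac{(-1)^{m-k}}{k}=\frac1m$ is indeed the kind of ingredient that enters, but a complete proof along your lines would essentially have to reprove Laumon's lemma, and that step is missing. Your alternative Solomon--Tits/fixed-point route is a legitimate different strategy but is not what the paper does and is likewise only sketched.
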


\begin{proof}

We first describe some notation. Fix a parahoric subgroup $J$ of $\GL_d(F)$, and let $L:=J/J_+$ be its maximal reductive quotient, which is isomorphic to a product of groups of the form $\GL_{d_i}(\FF_q)$. Let $W_L=N_L(T)/T$ for a split maximal torus $T$ of $L$. We will need the formulas 
\begin{equation} \label{eqn1}
1_J = \frac{1}{|W_L|}\sum\limits_{w\in W_L} R_{T_w}^L (1)
\end{equation}
\begin{eqnarray} \label{eqn2}
Ind_{J/K_1}^{\overline{G}} (1) = \frac{1}{|W_L|} \sum\limits_{w\in W_L} R_{T_w}^{\overline{G}}(1)
\end{eqnarray}
 The first is a rewriting of Proposition 7.4.2. in \cite{Carter}, the latter Proposition 7.4.4. of \cite{Carter}. 
We may rewrite the RHS of (\ref{eqncombprop}) using (\ref{eqn2}), and the coefficient of $R_g$, for $g\in W$ is $$f_g := d \sum\limits_{I\subset \{1, \ldots, d-1  \} }\frac{(-1)^{d-1-|I|}}{d-|I|} \frac{1}{|W_I|} |\{v\in W_I: \ v \text{ is conjugate to }  g  \text{ in } S_d  \}|$$ and so it suffices to show that $f_g=0$ unless $g$ is conjugate to $(12\cdots d)$, in which case $f_g=1$. The second assertion is straightforward: $y$ is not in any proper parabolic subgroup $W_I$ of $W=S_d$, so only $I=\{ 1, \ldots d-1\}$ contributes, and $f_y = d\cdot \frac{1}{1} \frac{1}{d!} \cdot |W\cdot y|$ but the conjugacy class of $y$ is size $(d-1)!$ so $f_y=1$. The vanishing property is more difficult.

Let the ``$1$-adic EP function'' be $f=\sum\limits_{I\subset \{1, \ldots, d-1  \}} \frac{(-1)^{d-1-|I|}}{d-|I|} \frac{1}{|W_I|}1_{W_I} $. Then the claim is equivalent to the statement that the orbital integral $O_g(f) :=\sum\limits_{v\in S_d} f(v^{-1}gv) = |C_{S_d}(g) |f_g$ vanishes for $g$ not conjugate to $(1\cdots d)$. Let $W_M\subset W$ be any Young subgroup. Using the notation of \cite{laumon}, we write $D_{M,I}$ for minimal coset representatives for $W_M\backslash W/W_I$. By \cite[Proposition 2.7.5.]{Carter}, every element of the double coset $W_M w W_I$ can be uniquely expressed as $w_M w w_I$ with $w_M\in W_M\cap D_{ \emptyset, J}$, $w_I\in W_I$, and $J:=\Delta^M \cap w(I)$. Then for any $W_M$ (i.e., any Young subgroup) we have 
\begin{equation*}
O_g(1_{W_I}) = \sum\limits_{w\in D_{M,I}} \sum\limits_{w_M\in W_M\cap D_{\emptyset, J}} \sum\limits_{w_I\in W_I} 1_{W_I}(w_I^{-1}w^{-1}w_M^{-1}g w_Mw w_I )$$ 
$$= \sum\limits_{w\in D_{M,I}} \sum\limits_{w_M\in W_M\cap D_{\emptyset, J}} |W_I| 1_{W_I}(w^{-1}w_M^{-1}g w_M w)
\end{equation*}
so 
\begin{equation*}
 O_g(f)=\sum\limits_{I\subset \{1, \ldots, d-1  \}} \sum\limits_{w\in D_{M,I}}\sum\limits_{w_M\in W_M\cap D_{\emptyset, \Delta^M\cap w(I)}}\frac{(-1)^{d-1-|I|}}{d-|I|} 1_{W_I} (w^{-1} w_M^{-1} gw_M w) .
\end{equation*}
If $J=\Delta^M \cap w(I)$ then $W_M\cap wW_I w^{-1}= W_J$. That is, $W_J$ is a parabolic subgroup of $W_M$ whose type is determined by $J$. This is parallel to \cite[Lemma 5.4.6.]{laumon}, which in turn relies on \cite[\S 2.7]{Carter}, especially Theorem 2.7.4. The set $W_J$ is precisely the support of $g\mapsto 1_{W_I}(w^{-1}gw)$, viewed as a function on $W_M$. In other words 
$$1_{W_I}^w|_{W_M} = 1_{W_J}$$ 
for $w\in D_{M,I}$, where $h^w(v) = h(wvw^{-1})$ for any function $h$ on $W$. Noting that $(12\cdots d)$ is the only elliptic conjugacy class in $W$, we may choose $M$ so that $g\in W_M \neq W$, and deduce
$$O_g(f)=\sum\limits_{J\subset \Delta^M} \sum\limits_{\substack{  I\subset \{1, \ldots d-1\}    \\ w\in D_{M, I} : \\ J= \Delta^M\cap w(I)   }  } \sum\limits_{w_M\in W_M\cap D_{\emptyset, J}} \frac{(-1)^{d-1-|I|}}{d-|I|} 1_{W_J}(w_M^{-1}gw_M).$$ 
So it suffices to show that for all $J\subset \Delta^M$ and all $w_M \in W_M \cap D_{\emptyset, J}$, we have $$\sum\limits_{ \substack{  I\subset \{1, \ldots, d-1\} \\  w\in D_{M, I}  \\ J= \Delta^M\cap w(I)  } }   \frac{(-1)^{d-1-|I|}}{d-|I|} 1_{W_J}(w_M^{-1}gw_M)=0. $$ This is obvious if $w_M^{-1}g w_M\not\in W_J$. Otherwise, since $W_M$ is a proper Young subgroup, this is a special case of \cite[Proposition 5.5.5.]{laumon}. We remark that \cite[Proposition 5.5.5.]{laumon} is a purely combinatorial statement, which is used as an important step in demonstrating the vanishing of the nonelliptic orbital integrals for the Euler-Poincare function on $G^*$.
\end{proof}

\begin{proposition} \label{gen prop}
Let $f$ be a class function on the parabolic subgroup $J/K_1$ of $K/K_1 = \GL_n(\FF_q)$, regarded as a function on $J$, where $J \subset K $ is any parahoric subgroup. Then we have $$[K:J] O_\gamma(f) = O_\gamma(Ind_{J/K_1} ^{\GL_n(\FF_q)} (f) )$$ for all $\gamma\in \GL_n(F)$, where $Ind (f)$ is the induced class function.

\end{proposition}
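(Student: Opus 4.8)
\emph{Proof proposal.} The plan is to reduce the statement to a single pointwise identity on $\GL_n(F)$ expressing the induced class function (inflated to $K$) as a conjugation-average over $K$ of $f$, to verify that identity by a routine finite-group count, and then to deduce the orbital-integral identity by Fubini together with the right-invariance of the invariant measure on $G^*_\gamma\backslash G^*$.

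Set $\overline{G} := K/K_1 = \GL_n(\FF_q)$ and $\overline{H} := J/K_1$, the given parabolic of $\overline{G}$, so that $[\overline{G}:\overline{H}] = [K:J]$. For a class function $h$ on $\overline G$ (resp.\ $\overline H$) write $\widetilde h$ for its inflation to $K$ (resp.\ to $J$) extended by zero to $\GL_n(F)$; thus $\widetilde f$ and $\widetilde{Ind(f)}$ are the functions in the statement, both supported on $K$, and I normalize $dk$ on $K$ by $\mathrm{vol}(K)=1$ in accordance with $dg^*(\GL_n(\Oh_F)) = 1$. The key step is the identity
$$\widetilde{Ind(f)}(x)\;=\;[K:J]\int_{K}\widetilde f(k^{-1}xk)\,dk \qquad (x\in \GL_n(F)).$$
Both sides vanish off $K$; for $x\in K$ with image $\overline x$, the integrand depends only on $\overline k^{-1}\overline x\,\overline k$ and is nonzero exactly when this lies in $\overline H$, so integrating over the fibres of $K\to\overline G$ (each of mass $1/|\overline G|$) gives $\int_K\widetilde f(k^{-1}xk)\,dk = |\overline G|^{-1}\sum_{\overline k:\ \overline k^{-1}\overline x\overline k\in\overline H} f(\overline k^{-1}\overline x\overline k)$. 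On the other hand, rewriting the defining sum of $Ind_{\overline H}^{\overline G}(f)(\overline x)$ over coset representatives as a sum over all of $\overline G$ — valid since $f$ is a class function on $\overline H$, so every coset contributes equally — gives $Ind_{\overline H}^{\overline G}(f)(\overline x) = |\overline H|^{-1}\sum_{\overline k:\ \overline k^{-1}\overline x\overline k\in\overline H} f(\overline k^{-1}\overline x\overline k)$. Comparing and using $|\overline G|/|\overline H| = [K:J]$ yields the identity.

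Granting this, I would substitute into the orbital integral to get
$$O_\gamma\big(\widetilde{Ind(f)}\big)\;=\;[K:J]\int_{G^*_\gamma\backslash G^*}\int_{K}\widetilde f\big((gk)^{-1}\gamma(gk)\big)\,dk\,d\dot g.$$
For $\gamma$ semisimple the integrand is bounded and, uniformly in $k\in K$, supported on the compact set $\{\dot g : g^{-1}\gamma g\in K\}$, so Fubini applies; interchanging the integrals and then substituting $g\mapsto gk^{-1}$ in the inner one — legitimate by right $G^*$-invariance of $d\dot g$ — shows the inner integral equals $O_\gamma(\widetilde f)$ for each $k$. Hence $O_\gamma(\widetilde{Ind(f)}) = [K:J]\,\mathrm{vol}(K)\,O_\gamma(\widetilde f) = [K:J]\,O_\gamma(\widetilde f)$.

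I do not anticipate a real obstacle here: the only genuine content is the classical finite-group identity of the second paragraph, and the rest is formal. The points requiring a little care are keeping the two measure normalizations straight (mass $1$ for $K$ on the $p$-adic side versus $1/|\overline G|$, $1/|\overline H|$ on the finite side) and the (routine) justification of the interchange of integrations; for non-semisimple $\gamma$ the same manipulation shows both sides are simultaneously defined and equal.
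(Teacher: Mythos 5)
Your proof is correct and rests on the same core facts as the paper's: the induced class function is (up to the factor $[K:J]$) an average of conjugates of $f$, and orbital integrals are invariant under conjugation of the test function. The paper packages this by reducing to $f=1_C$ and writing $Ind(1_C)=\sum_{g\in K/J}1_{gCg^{-1}}$ with each summand having the same orbital integral, whereas you phrase it as an integral identity over $K$ followed by Fubini and the substitution $g\mapsto gk^{-1}$; this is a presentational difference only.
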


\begin{proof}

Certainly it suffices to consider the case $f=1_C$, the characteristic function of a conjugacy class $C \subset J/K_1$. Then the induced function $K/K_1\to \C$ is
$$Ind_{J/K_1}^{K/K_1}f(s)= \frac{1}{|J/K_1|} \sum\limits_{ \substack{  t\in K/K_1 \\tst^{-1} \in C } }  1 =\sum\limits_{g\in K/J} 1_{gCg^{-1}}(s).$$

The first equality is the definition. The second equality is a simple exercise in finite group theory, but we provide it for completeness. Note that the containment $s\in C^g:=gCg^{-1}$ only depends on the coset $g(J/K_1)$. Further, $s\in C^g$ iff $g^{-1}sg \in C$, so the value of $\sum\limits_{g\in K/K_1} 1_{C^g} = |J/K_1|\sum\limits_{g\in K/J} 1_{C^g}$, which is a class function on $K/K_1$, on $s\in C$, is exactly $|\{t\in K/K_1: tst^{-1}\in C  \}|$, as desired. The conclusion of the proposition is clear, since $O_\gamma(1_{gCg^{-1}})$ is certainly independent of $g$. 
\end{proof}


 \begin{Remark}
In particular, if $f$ is a unipotent character of $K/K_1$, then since it can be written as a linear combination of parabolic inductions of trivial characters, its lift to $K$ has orbital integrals equal to those of some function supported on $K$ which is a linear combination of characteristic functions of parahoric subgroups, and hence is Iwahori-biinvariant. Not all Iwahori-biinvariant functions so arise, since in general the span of $1_J$ as $J$ varies through parahorics $I\subset J \subset K$ is strictly smaller than $\Hz(K, I)$.
 \end{Remark}  

\begin{proof}[Proof of Proposition \ref{EPandKV}]
We apply Proposition \ref{gen prop} to the special case of $f=e_J:=\frac{1}{dg^*(J)}1_J$, using $[K:J]=1/dg^*(J)$ to obtain $$O_\gamma(e_J) = O_\gamma( Ind_{J/K_1}^{K/K_1}(1)).$$ In particular the LHS only depends on the maximal reductive quotient $J/J_+$, which is a weaker invariant than the conjugacy class of $J$. Thus
\begin{equation*}
\begin{split}
O_\gamma(f_{\GL_d(F)}^{EP})  &= O_\gamma\left( \sum\limits_{I\subset \{1, \ldots, d-1  \}} \frac{(-1)^{d-1-|I|}}{d-|I|} Ind_{J_I/K_1}^{\GL_d(\FF_q)} (1)\right)\\
&= \frac{1}{d}O_\gamma(R_{\FF_{q^d}^\times}^{\GL_d(\FF_q)} (1) ) \\ 
\end{split}
\end{equation*}
where the second equality is Proposition \ref{Comb prop}.
\end{proof}

For the Iwahori subgroup in general $\GL_r(D)$ the situation is as follows. We again have the following special case of \cite[Theorem 2.2.6.]{KV}.

\begin{theorem}[Kazhdan-Varshavsky] \label{KVtheorem2}
Suppose that $F$ is characteristic zero, and suppose that $p>n$, where $p$ is the residue characteristic of $F$. If $$w=(12\cdots d)(d+1\cdots 2d)\cdots ( (r-1)d+1\cdots rd  )$$ then $1_I\in \Hz (\GL_r(D) )$ and $R_w=R_{T_r}^{\GL_n(\FF_q)}(1)\in \Hz(\GL_n(F))$ are matching functions. Here $T_r\cong (\FF_{q^{n/r}}^\times)^r$ and $I\subset \GL_r(D)$ is the Iwahori subgroup.
\end{theorem}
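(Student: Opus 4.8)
The plan is to imitate the case $r=1$: first rewrite $R_w$ combinatorially as an explicit Iwahori-biinvariant function $\Psi$ on $\GL_n(F)$ assembled from characteristic functions of parahorics, and then recognize $\Psi$ as a relative Euler--Poincar\'e function whose matching with $1_I$ is the Iwahori instance of Theorem \ref{defofF_J}. By \cite[Proposition 2]{Kott} it suffices to match regular semisimple orbital integrals, so the sign factors in Definition \ref{matchingdef} drop out and the task is to show $O_{\gamma^*}(R_w)=O_\gamma(1_I)$ for corresponding regular semisimple $\gamma\leftrightarrow\gamma^*$ and $O_{\gamma^*}(R_w)=0$ when $\gamma^*$ does not transfer to $\GL_r(D)$.

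For the combinatorial step, note that $T_r\cong(\FF_{q^d}^\times)^r$ is the product of the Coxeter tori of the factors of the standard Levi $\overline M:=\GL_d(\FF_q)^r\subset\GL_n(\FF_q)$, so transitivity of Deligne--Lusztig induction gives $R_w=R^{\GL_n(\FF_q)}_{\overline M}(R_y\boxtimes\cdots\boxtimes R_y)$ ($r$ factors, $y=(12\cdots d)$), with $R^{\GL_n(\FF_q)}_{\overline M}$ the Harish--Chandra induction. Feeding in Proposition \ref{Comb prop} for each factor and applying transitivity of Harish--Chandra induction (a box product of inductions from parabolics of the factors is the induction from the concatenated parabolic) yields
\[ R_w=d^{\,r}\!\!\sum_{(I_1,\dots,I_r)}\Bigl(\prod_{j=1}^{r}\tfrac{(-1)^{d-1-|I_j|}}{d-|I_j|}\Bigr)\,Ind_{J_{(I_\bullet)}/K_1}^{\GL_n(\FF_q)}(1), \]
where $J_{(I_\bullet)}\subset\GL_n(\Oh_F)$ is the parahoric attached to the composition of $n$ obtained by concatenating the $r$ compositions of $d$ determined by the $I_j$. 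Proposition \ref{gen prop}, applied to the trivial class function (so that $[K:J]1_J=e_J$), converts this into the orbital-integral identity $O_\gamma(R_w)=O_\gamma(\Psi)$ for all $\gamma\in\GL_n(F)$, where $\Psi:=d^{\,r}\sum_{(I_\bullet)}\bigl(\prod_j\tfrac{(-1)^{d-1-|I_j|}}{d-|I_j|}\bigr)e_{J_{(I_\bullet)}}\in\Hz(\GL_n(F),I^*)$. This uses no hypothesis on $F$, and one checks that $\Psi$ is, up to its overall constant (here $d^{\,r}=|M^*/M^{*,1}Z(M^*)|$), the relative Euler--Poincar\'e function $f_{S_{M^*}}$ attached to $M^*=\GL_d(F)^r$, hence the function $F_I$ of Theorem \ref{defofF_J}.

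It remains to show that $\Psi$ and $1_I$ have matching orbital integrals; this is the Iwahori case of Theorem \ref{defofF_J}, and it is here that the hypotheses $\operatorname{char}F=0$ and $p>n$ are used. The relevant input is a comparison on Levi subgroups: an Iwahori-factorization computation relates the constant term of $\Psi$ (resp.\ of $1_I$) along $M^*=\GL_d(F)^r$ (resp.\ along $M=\GL_1(D)^r\subset\GL_r(D)$, where $c_M^{G}(1_I)=1_{(\Oh_D^\times)^r}$) to an explicit multiple of the box product $\boxtimes_{j=1}^{r}f^{EP}_{\GL_d(F)}$ (resp.\ of $\boxtimes_{j=1}^{r}(\tfrac1d 1_{\Oh_D^\times})$), and these box products match by Proposition \ref{EPmatch} applied to each factor $\GL_1(D)\subset\GL_d(F)$. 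Assembling the regular semisimple orbital integrals of $\Psi$ from this data then requires a Laumon-type alternating-sum vanishing argument --- relative to the block structure $(d,\dots,d)$ rather than to the full group --- to kill $O_{\gamma^*}(\Psi)$ at classes that do not transfer (those whose characteristic polynomial has an irreducible factor of degree prime to $d$), together with the evaluation at the classes that do, the delicate case being the elliptic classes where the descent is trivial. I expect this last step --- extending the combinatorial vanishing statements of \cite[\S5]{laumon} and the accompanying depth-zero orbital-integral identities from the elliptic to the $(d,\dots,d)$-relative situation on $\GL_r(D)$ --- to be the main obstacle, and it is the source of the restrictions on $\operatorname{char}F$ and $p$. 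With it in hand, Kazhdan's density theorem and the Weyl integration formula, as in the proof of Lemma \ref{WIF lemma}, finish the comparison.
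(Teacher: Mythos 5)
There is a genuine gap, and in fact the paper does not prove this statement at all: it is quoted verbatim as a special case of \cite[Theorem 2.2.6.]{KV}, and the restrictions $\mathrm{char}\,F=0$, $p>n$ are simply inherited from Kazhdan--Varshavsky. Your combinatorial step 1 is fine (and is essentially what the paper does in the theorem \emph{following} this one: transitivity of Deligne--Lusztig/Harish--Chandra induction plus Propositions \ref{Comb prop} and \ref{gen prop} convert $R_w$ into the Iwahori-biinvariant function $\Psi$ with the same orbital integrals). But your step 2 is circular: in the paper, Theorem \ref{defofF_J} and the identification of $\Psi$ with $F_I$ are \emph{deduced from} the Kazhdan--Varshavsky matching of $R_w$ with $1_I$ (``By a special case of \cite[Theorem 2.2.6.]{KV}, this then matches\dots''), so you cannot invoke Theorem \ref{defofF_J} to prove the matching.

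The substantive missing content is the actual comparison of orbital integrals, above all at elliptic regular semisimple classes. Your descent via constant terms along $M^*=\GL_d(F)^r$ and Proposition \ref{EPmatch} on each factor can only control classes that are conjugate into a proper Levi; an elliptic regular $\gamma^*\in\GL_n(F)$ (irreducible characteristic polynomial of degree $n$) does transfer to $\GL_r(D)$ when $r>1$ but admits no such descent, and it is exactly there that one must compute $O_{\gamma^*}(R_w)$ directly. For $r=1$ the paper escapes this because Laumon computes \emph{all} orbital integrals of the Euler--Poincar\'e function of $\GL_n(F)$, and Proposition \ref{EPandKV} identifies $R_{(12\cdots d)}$ with $d\cdot f^{EP}$; no analogue of Laumon's computation is available in the paper for the relative function $f_{S_{M^*}}$ when $d<n$. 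You flag this yourself as ``the main obstacle,'' but that obstacle \emph{is} the theorem: what remains after your reductions is precisely the content of Kazhdan--Varshavsky's result (proved there by quasi-logarithms and the Springer hypothesis, which is where $p>n$ and $\mathrm{char}\,F=0$ enter), and the correct disposition here is to cite it rather than to reprove it.
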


 It would be natural to have an Iwahori-biinvariant function on $\GL_n(F)$ which matches $1_I\in \mc{H}(\GL_r(D) )$. Note that a Deligne-Lusztig function is only $K_1$-biinvariant. 
 
 \begin{theorem}
Suppose that $F$ is characteristic zero, and suppose that $p>n$, where $p$ is the residue characteristic of $F$. Define the function 
$$  d^r \sum\limits_{I_1, \ldots, I_r \subset \{ 1, \ldots, d-1 \}} \left( \prod\limits_{i=1}^r \frac{(-1)^{d-1-|I_i|}}{d-|I_i|}  \right) e_{J_{\vec{I}} }$$
where the subscript on the $e$ is understood to mean a parahoric with $\prod\limits_{i=1}^r J_{I_i} / J_{I_{i},+}$ as its maximal reductive quotient. Then this function gives an $I^*$-biinvariant function, supported on the parahoric corresponding to the partition $(d^r)$ of $n$, which is associated to $1_I \in \Hz(G, I)$.

 \end{theorem}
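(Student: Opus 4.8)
The plan is to reduce the statement, via the already-established matching results for $\GL_1(D)$, to a tensor-product (product over the $r$ blocks) of the analysis carried out in Propositions~\ref{Comb prop} and \ref{gen prop}. Concretely, the parahoric $I\subset G=\GL_r(D)$ corresponds to the partition $(1^r)$ of $r$, so $I$ is literally a product: its maximal reductive quotient is trivial and $1_I$ factors, under the identification of the relevant block-diagonal Levi, as the external tensor product of $r$ copies of $1_{\Oh_D^\times}\in\Hz(\GL_1(D))$. On the $\GL_n(F)$ side, the Deligne--Lusztig function $R_w$ of Theorem~\ref{KVtheorem2}, with $w=(12\cdots d)(d+1\cdots 2d)\cdots$, is likewise the external tensor product of $r$ copies of $R_{(12\cdots d)}=R_{\FF_{q^d}^\times}^{\GL_d(\FF_q)}(1)$, now regarded inside $\GL_n(F)$ via the parahoric with maximal reductive quotient $\GL_d(\FF_q)^r$. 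Thus the combinatorial identity of Proposition~\ref{Comb prop}, applied in each of the $r$ factors and multiplied together, rewrites $R_w$ as
$$ R_w \;=\; d^r \sum\limits_{I_1,\ldots,I_r\subset\{1,\ldots,d-1\}}\left(\prod_{i=1}^r \frac{(-1)^{d-1-|I_i|}}{d-|I_i|}\right) Ind_{J_{\vec I}/K_1}^{\GL_n(\FF_q)}(1), $$
where $J_{\vec I}$ is the parahoric of $\GL_n(F)$ with maximal reductive quotient $\prod_{i=1}^r J_{I_i}/J_{I_i,+}$, all such parahorics living inside the single parahoric corresponding to the partition $(d^r)$.

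Next I would pass from the inflated Deligne--Lusztig function to a genuine Iwahori-biinvariant function on $\GL_n(F)$ by invoking Proposition~\ref{gen prop} in exactly the way it was used to prove Proposition~\ref{EPandKV}: for each $\vec I$, the function $Ind_{J_{\vec I}/K_1}^{\GL_n(\FF_q)}(1)$, viewed as a function on $\GL_n(F)$ supported on the maximal compact, has the same orbital integrals as $[K:J_{\vec I}]\,1_{J_{\vec I}} = e_{J_{\vec I}}$, where $e_{J_{\vec I}}=\frac{1}{dg^*(J_{\vec I})}1_{J_{\vec I}}$. Summing over $\vec I$ with the displayed coefficients, the function in the statement has, orbital-integral-by-orbital-integral, the same values as $R_w$. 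Since it is a linear combination of characteristic functions of parahorics contained in the fixed parahoric attached to $(d^r)$, it is $I^*$-biinvariant and supported there, giving the asserted biinvariance and support properties. Finally, Theorem~\ref{KVtheorem2} says $R_w$ and $1_I$ are matching functions under the stated hypotheses on $F$ (characteristic zero, $p>n$); since our function has the same orbital integrals as $R_w$ everywhere, it matches $1_I$ as well.

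The one genuinely substantive point — the rest being bookkeeping — is the \emph{multiplicativity} of all the relevant objects across the $r$ blocks: that $1_I$, $R_w$, the $Ind(1)$ functions, and orbital integrals of elements lying in the corresponding block-diagonal Levi all factor as products over the $r$ copies, \emph{and} that the reduction from general $\gamma\in\GL_n(F)$ to block-diagonal (i.e.\ to the Levi $\GL_d(F)^r$) is legitimate. For this I would argue as follows: an orbital integral of an Iwahori-biinvariant function (or of $R_w$, which is $K_1$-biinvariant and supported on $K$) at a semisimple $\gamma$ can be nonzero only when $\gamma$ is conjugate into $K$, and then — because all functions in play are supported on the parahoric for $(d^r)$ and are built by parabolic induction/inflation from the Levi $\GL_d(\FF_q)^r$ — the orbital integral reduces, by the standard descent of orbital integrals along a parahoric with its Levi decomposition (the same mechanism underlying Proposition~\ref{gen prop} and the constant-term identity of \S\ref{sectionPropertiesofT}), to a product of orbital integrals on the $\GL_d$-factors. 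On those factors Proposition~\ref{EPandKV} (equivalently Proposition~\ref{Comb prop} plus Proposition~\ref{gen prop}) applies verbatim. I expect the careful formulation of this descent — matching up conjugacy classes, tracking the measures $dg^*$ normalized by $dg^*(K)=1$ versus the block-wise normalizations, and checking the vanishing of orbital integrals at classes not meeting the $(d^r)$-parahoric — to be the main obstacle, though none of it requires ideas beyond those already deployed for the $\GL_1(D)$ case.
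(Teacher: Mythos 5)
Your proposal follows the paper's route in substance — Proposition \ref{Comb prop} applied blockwise, Proposition \ref{gen prop} to pass from induced class functions to orbital integrals of $e_{J_{\vec I}}$, and Theorem \ref{KVtheorem2} to close the loop — but two points in your framing deserve correction. First, $R_w=R_{T_r}^{\GL_n(\FF_q)}(1)$ is \emph{not} the external tensor product of $r$ copies of $R_{\FF_{q^d}^\times}^{\GL_d(\FF_q)}(1)$ ``regarded inside $\GL_n(F)$ via the parahoric'': that description would give a function supported on the parahoric attached to $(d^r)$, whereas $R_w$ is a class function on all of $\GL_n(\FF_q)$. The correct statement, which is what the paper cites (Carter, Prop.\ 7.4.4), is that $R_w$ equals the Harish--Chandra (parabolic) induction from the Levi $\GL_d(\FF_q)^r$ of that tensor product; combined with transitivity of induction of class functions, this yields exactly your displayed identity, so your formula is right but your justification for it conflates inflation-to-a-parahoric with parabolic induction. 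Second, the ``main obstacle'' you identify — a descent of $p$-adic orbital integrals on $\GL_n(F)$ to the block-diagonal Levi, with attendant bookkeeping of measures and conjugacy classes — is not needed and would be considerably harder than the actual argument. The paper never factors an orbital integral on $\GL_n(F)$ across the $r$ blocks: all multiplicativity is handled at the level of class functions on the finite group $\GL_n(\FF_q)$ (where tensoring and inducing are elementary), and the only passage to $p$-adic orbital integrals is a single application of Proposition \ref{gen prop} to the already-assembled class function, exactly as in the proof of Proposition \ref{EPandKV}. With those two adjustments your argument is the paper's proof.
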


\begin{proof}

To proceed, observe first that by \cite[Prop. 7.4.4.]{Carter},  $$R_{T_r}^{\GL_n(\FF_q)}(1) = Ind_{P_r}^{\GL_n(\FF_q)} \left( R_{\FF_{q^d}^\times}^{\GL_d(\FF_q)} (1)^{\boxtimes r} \right)$$
where $d=n/r$ and $P_r$ is a parabolic with Levi factor $M_r\cong \GL_d(\FF_q)^r$.  
We now apply Proposition \ref{Comb prop}, observing that by transitivity of induction, $$Ind_{P_r}^{\GL_n(\FF_q)} \left(Ind_{Q_1}^{\GL_d(\FF_q)} (1) \boxtimes \cdots \boxtimes Ind_{Q_r}^{\GL_d(\FF_q)}(1)\right) = Ind^{\GL_n(\FF_q)}_{L_1\times \cdots \times L_r} (1) $$ where $L_i$ is any choice of Levi factor of the parabolic subgroup $Q_i\subset \GL_d(\FF_q)$, and on the RHS we mean parabolic induction. Thus 
$$R_{T_r}^{\GL_n(\FF_q)}(1) = d^r \sum\limits_{I_1, \ldots, I_r \subset \{ 1, \ldots, d-1 \}} \left( \prod\limits_{i=1}^r \frac{(-1)^{d-1-|I_i|}}{d-|I_i|}  \right) Ind^{\GL_n(\FF_q)}_{\prod\limits_{i=1}^r J_{I_i} / J_{I_{i},+}}(1)$$
and has orbital integrals equal to that of the function defined in the theorem.
\end{proof}

\begin{Remark}
This is not proportional to a summand of the usual EP function. For example, the coefficient of $e_{I^*}$, which corresponds to taking all subsets to be empty, is $(-1)^{r(d-1)}$, whereas in $f^{EP}$ it is $(-1)^{d-1}\frac{1}{rd}$. However the the term corresponding to taking all subsets to be $\{1, \ldots, d-1\}$ has coefficient $d^r$, whereas in the EP function its coefficient is ($\Delta-I = \{d, 2d, \ldots, (r-1)d  \}$) equal to $(-1)^{r-1}/ r$. 
\end{Remark}

We can also give a more conceptual construction of such a function, similar to Kottwitz's original description of EP functions, which will have the virtue that it applies to any connected reductive $F$-group. Let $M=\GL_d(F)^r\subset \GL_n(F)$. Let $S_M$ be a set of representatives for the $M_{ad}$-orbits (equivalently $M$-orbits) of facets in the Bruhat-Tits building $B(M_{ad})$. For $\sigma\in S_M$ we write $M_\sigma$ for the stabilizer of $\sigma$ in $M$. Then the function
$$f_{S_M} : = d^r\sum\limits_{\sigma\in S_M} \frac{(-1)^{\dim\sigma} }{ vol(M_\sigma/ Z(M), dm/dz )}  \frac{vol(M_\sigma \cap M^1, dm)  }{vol(J_\sigma, dg)}1_{J_\sigma} \in \Hz(G^*)$$ matches $1_I\in \Hz(G)$, where $J_\sigma$ is any parahoric in $\GL_n(F)$ with maximal reductive quotient $J_\sigma/ J_{\sigma, +}$ being isomorphic to the maximal reductive quotient of $M_\sigma \cap M^1$. This has the same orbital integrals as our function above, hence the same as $R_{T_r}^{\GL_n(\FF_q)}(1)$. In fact, the former is an average over certain choices of $S_M$, as is done in \cite[Lemma 5.2.2.]{laumon}, using an identification of the facet $\sigma$ with a product of $r$ facets of $PGL_d(F)$. Observe that if $d=n$ then $f_{S_M}$ is the usual Euler-Poincare function (up to a factor of $d$), and if $d=1$ then it is the characteristic function of the Iwahori $I^*$, divided by $dg^*(I^*)$. Note that the scalar $d^r$ can be interpreted as $|M/ M^1 Z(M)|$, or as the Coxeter number of the Weyl group of $M$. More generally, for $M^*\subset G^*=\GL_n(F)$ an arbitrary Levi subgroup, we define 
$$f_{S_{M^*}} = |M^*/ M^{*,1} Z(M^*)|   \sum\limits_{\sigma\in S_{M^*}} \frac{(-1)^{\dim\sigma} }{ vol(M^*_\sigma/ Z(M^*), dm/dz )}  \frac{vol(M^*_\sigma \cap M^{*,1}, dm)  }{vol(J_\sigma, dg)}1_{J_\sigma}$$ where $J_\sigma$ is any parahoric in $\GL_n(F)$ with maximal reductive quotient $J_\sigma/ J_{\sigma, +}$ being isomorphic to the maximal reductive quotient of $M_\sigma \cap M^1$. We will use these functions shortly. Note that we can choose the $J_\sigma$ in such a way that they all contain a given Iwahori subgroup. We assume such a choice has been made, so that $f_{S_{M^*}}$ is biinvariant with respect to this Iwahori subgroup. 





Finally, let $J$ be an arbitrary parahoric in $\GL_r(D)$. We will now construct an explicit function $F_J\in \Hz(G^*, I^*)$ which matches $1_J\in \Hz(G)$. Let 
$$L := J/J^+ \cong \prod\limits_{i=1}^k \GL_{r_i}(\FF_{q^d})$$ be the maximal reductive quotient of $J$, so that a maximal torus $T(\FF_q)\subset L$ is of the form $T_1(\FF_1)\times \cdots \times T_k(\FF_q)$ where $T_i(\FF_q) \cong \prod\limits_{j=1}^{t_i} \FF^\times_{q^{dr_{ij}}}$ where $\sum\limits_{j=1}^{t_i} r_{ij}=r_i$. 
Write $L^* = \prod\limits_{i=1}^k \GL_{r_i d}(\FF_q)$; this is the maximal reductive quotient of a parahoric subgroup $J^*\subset \GL_n(F)$ corresponding to $J\subset \GL_r(D)$. Note that every maximal torus of $L$ also embeds in $L^*$. If $w\in W_L$, we will write $T_w$ for the corresponding (conjugacy class of) maximal torus, which we will regard as being embedded in $L$ or $L^*$ as needed. 
Then $R_{T_w}^{L}(1)$ lifts to a function on $J$, and we have $1_J= \frac{1}{|W_{L}|} \sum\limits_{w\in W_L} R_{T_w}^{L}(1)$ as above. By a special case of \cite[Theorem 2.2.6.]{KV}, this then matches $$\frac{1}{|W_L|} \sum\limits_{w\in W_L} R_{T_w}^{\GL_n(\FF_q)}(1) =\frac{1}{|W_L|} \sum\limits_{w\in W_L} Ind_{L^*}^{\GL_n(\FF_q)} (R_{T_w}^{L^*}(1)).$$ 
We can repeat the analysis above for the Iwahori for each summand $Ind_{L^*}^{\GL_n(\FF_q)} (R_{T_w}^{L^*}(1)) $ to obtain an $f^*\in \Hz(G^*, I^*)$ associated to $1_J$. Explicitly, if to $w\in W_L$, or more properly $T_w$, we associate integers $r_{ij}$ and $t_i$ as above (we suppress dependence of these on $w$), then an Iwahori-biinvariant matching function to $1_I$ is 
$$\frac{1}{|W_L|} \sum\limits_{w\in W_L} \sum\limits_{\substack{   \vec{I}= \{I_{ij} \}  \\  I_{ij}\subset \{1, \ldots, dr_{ij}-1\}  \\  }    }  \left(\prod\limits_ {  \substack {  1\leq i \leq k \\  1\leq j\leq t_i  }  }  \frac{  dr_{ij} (-1)^{ dr_{ij} -1-|I_{ij}| }    }{ dr_{ij} - |I_{ij}|   } \right) e_{J_{\vec{I} } }$$ where again $J_{\vec{I} }$ means a parahoric with maximal reductive quotient isomorphic to $\prod\limits_{i,j } J_{I_{ij} } / J_{I_{ij} }^+  $. Once again we can describe this more conceptually. Let ${\bf T}_w$ be an unramified torus in $G=\GL_r(D)$ defined over $\Oh_{F}$ so that ${\bf T }_w (\FF_q) = T_w$. Then ${\bf T}_w $ also embeds in $G^*=\GL_n(F)$. Let $M_w^* \subset G^*$ be a Levi subgroup such that ${\bf T}_w(F) \subset M_w^*$ and ${\bf T }_w (F)$ is elliptic in $M_w^*$. This latter property determines the conjugacy class of $M_w$. By identical reasoning to the Iwahori case, we now obtain the following result. 

\begin{theorem} \label{defofF_J}
Suppose that $F$ is characteristic zero and $p>n$. Let $J\subset G=\GL_r(D)$ be a parahoric subgroup, and let $W_L$ be the Weyl group of the maximal reductive quotient $L$ of $J$. 
Then $$F_J:=\frac{1}{|W_L|} \sum\limits_{w\in W_L}  f_{S_{M_w^* }} $$ is an Iwahori-biinvariant function associated to $1_J$. For any algebraic, finite-dimensional representation $V$ of ${^L} G$, the functions $Z_V * 1_J$ and $Z_V^* * F_J$ are associated. 
\end{theorem}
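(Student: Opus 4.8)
The plan is to establish the two assertions in turn. The first --- that $F_J\in\Hz(G^*,I^*)$ is Iwahori-biinvariant and has matching orbital integrals to $1_J$ --- is obtained by assembling the ingredients already developed, exactly as in the Iwahori case. The second --- that $Z_V*1_J$ and $Z_V^**F_J$ are associated --- is then immediate from Corollary \ref{matchdistcor} applied to the associated pair $(1_J,F_J)$, which requires only that $F$ have characteristic zero.

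For the matching statement I would start from the identity $1_J=\frac{1}{|W_L|}\sum_{w\in W_L}R_{T_w}^{L}(1)$ in $\Hz(G)$, where each $R_{T_w}^{L}(1)$ is inflated from $L=J/J^+$ to $J$ and extended by zero (\cite[Prop.~7.4.2.]{Carter}). Since $F$ has characteristic zero and $p>n$, the relevant special case of \cite[Theorem 2.2.6.]{KV} shows that each $R_{T_w}^{L}(1)$ is associated to $R_{T_w}^{\GL_n(\FF_q)}(1)$, where $T_w$ is regarded as a maximal torus of $\GL_n(\FF_q)$ via $T_w\hookrightarrow L^*\hookrightarrow\GL_n(\FF_q)$ and the Deligne--Lusztig function is inflated to $\GL_n(\Oh_F)$ and extended by zero. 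Hence, by linearity of the matching relation, $1_J$ is associated to $\frac{1}{|W_L|}\sum_{w\in W_L}R_{T_w}^{\GL_n(\FF_q)}(1)$.

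Next I would rewrite each summand as in the Iwahori analysis. Since $T_w$ is elliptic in $M_w^*\cong\prod_{i,j}\GL_{dr_{ij}}(F)$, iterated use of \cite[Prop.~7.4.4.]{Carter} gives
$$R_{T_w}^{\GL_n(\FF_q)}(1)=Ind_{\prod_{i,j}\GL_{dr_{ij}}(\FF_q)}^{\GL_n(\FF_q)}\Big(\boxtimes_{i,j}R^{\GL_{dr_{ij}}(\FF_q)}_{\FF_{q^{dr_{ij}}}^\times}(1)\Big),$$
the induction being the finite-group parabolic induction from the displayed Levi subgroup. Proposition \ref{Comb prop}, applied to each factor $\GL_{dr_{ij}}(\FF_q)$, expresses each Coxeter Deligne--Lusztig function as an explicit signed combination of functions induced from trivial characters of parabolic subgroups; multiplying these out and using transitivity of induction writes $R_{T_w}^{\GL_n(\FF_q)}(1)$ as an explicit signed combination of functions $Ind_{J_{\vec{I}}/K_1}^{\GL_n(\FF_q)}(1)$ over parahorics $J_{\vec{I}}\supseteq I^*$. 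By Proposition \ref{gen prop}, each such induced function has the same orbital integrals as the normalized characteristic function $e_{J_{\vec{I}}}$, so $R_{T_w}^{\GL_n(\FF_q)}(1)$ has the same orbital integrals as the corresponding signed combination of the $e_{J_{\vec{I}}}$'s.

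It remains to identify that signed combination with $f_{S_{M_w^*}}$. As in the discussion preceding the theorem, this is an averaging identity over the choices of the facet-representative set $S_{M_w^*}$, parallel to \cite[Lemma 5.2.2.]{laumon}, using the identification of a facet of the Bruhat--Tits building of $M^*_{w,\mathrm{ad}}$ with a product of facets of the $PGL_{dr_{ij}}(F)$; together with the combinatorial vanishing statements \cite[Propositions 5.4.6., 5.5.5.]{laumon} already invoked inside Proposition \ref{Comb prop}, it shows $f_{S_{M_w^*}}$ and $R_{T_w}^{\GL_n(\FF_q)}(1)$ have equal orbital integrals. Averaging over $w\in W_L$ with weights $1/|W_L|$ then shows $F_J$ is associated to $1_J$, and Iwahori-biinvariance is automatic since every $e_{J_\sigma}$ occurring is a scalar multiple of $1_{J_\sigma}$ with the $J_\sigma$ chosen to contain $I^*$. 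Finally, $(1_J,F_J)$ being an associated pair, Corollary \ref{matchdistcor} gives that $Z_V*1_J$ and $Z_V^**F_J$ are associated. I expect the main obstacle to be this penultimate step --- reconciling the explicit $e_{J_{\vec{I}}}$-expansion produced by the Deligne--Lusztig and induction calculus with the geometrically defined $f_{S_{M_w^*}}$ --- where the combinatorial content of \cite[\S 5]{laumon} is genuinely used; every other step is a citation or a formal manipulation.
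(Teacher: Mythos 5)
Your proposal is correct and follows essentially the same route as the paper: the decomposition $1_J=\frac{1}{|W_L|}\sum_w R_{T_w}^L(1)$, the Kazhdan--Varshavsky matching of Deligne--Lusztig functions, the reduction via Propositions \ref{Comb prop} and \ref{gen prop} to signed combinations of $e_{J_{\vec I}}$'s, the identification with $f_{S_{M_w^*}}$ via the Laumon-style averaging, and Corollary \ref{matchdistcor} for the final assertion. You also correctly locate the one step the paper itself treats only by analogy (matching the explicit expansion against the facet-indexed definition of $f_{S_{M_w^*}}$), so nothing further is needed.
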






\begin{Remark}
Kazhdan and Varshavsky (\cite{KV}) prove that the DL functions have the correct orbital integrals, subject to assumptions on the residue characteristic being sufficiently large. It would be desirable to have a proof of this that has no such constraint.  
\end{Remark}


\begin{Remark}
Once we have the matching functions for $1_J$, we can apply Corollary \ref{matchdistcor}, and since every element of $Z(G, J)$ is of the form $Z_V * 1_J$, as $V$ ranges over representations of the dual group of $G$, we obtain an explicit Iwahori-biinvariant matching function 
for every element of $Z(G, J)$. 
\end{Remark} 


We wish to formulate a (naive) conjecture for more general reductive groups. Let $G$ be an arbitrary reductive group, and $G^*$ be its quasisplit inner form. Let $J\subset G$ be an arbitrary parahoric subgroup. To the function $1_J$ we wish to associate an Iwahori-biinvariant matching function on $G^*$. 

\begin{definition} \label{gendef}
Let $M$ be a Levi subgroup of the connected reductive group $G$, let $S_M$ be a choice of representatives for the $M$-orbits of facets in the Bruhat-Tits building $B(M_{ad})$. The define the ``relative Euler-Poincare function'' to be 
$$f_{S_M} := \left|\frac{M}{M^1 Z(M)}\right| \sum\limits_{\sigma\in S_M}\frac{(-1)^{\dim\sigma} }{ vol(M_\sigma/ Z(M), dm/dz )}  \frac{vol(M_\sigma \cap M^1, dm)  }{vol(J_\sigma, dg)}1_{J_\sigma}$$ where $M_\sigma$ is the stabilizer of $\sigma$, and $J_\sigma$ is a parahoric subgroup of $G$ such that $J_\sigma\cap M = M_\sigma\cap M^1$ and $J_\sigma$ is minimal for this property. 
\end{definition}

\begin{lemma}
The orbital integrals of $f_{S_M}$ are independent of the choices of $S_M$ and the $J_\sigma$.  
\end{lemma}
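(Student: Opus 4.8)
The plan is to isolate the two sources of ambiguity. For a semisimple $\gamma$ write the $\sigma$-summand of $f_{S_M}$ as $c_\sigma\,e_{J_\sigma}$, where $e_{J_\sigma}:=\mathrm{vol}(J_\sigma,dg)^{-1}1_{J_\sigma}$ and $c_\sigma:=\bigl|M/M^1Z(M)\bigr|\,(-1)^{\dim\sigma}\,\mathrm{vol}(M_\sigma\cap M^1,\,dm)/\mathrm{vol}(M_\sigma/Z(M),\,dm/dz)$, so that $O_\gamma(f_{S_M})=\sum_{\sigma\in S_M}c_\sigma\,O_\gamma(e_{J_\sigma})$. It then suffices to prove that the number $c_\sigma\,O_\gamma(e_{J_\sigma})$ depends only on the $M$-orbit of $\sigma$ and not on the auxiliary parahoric $J_\sigma$: independence from $S_M$ is then immediate, since any two sets of orbit representatives match up orbit by orbit.

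\emph{Independence of $J_\sigma$.} As observed in the proof of Proposition~\ref{EPandKV} (apply Proposition~\ref{gen prop} to $f=e_J$, after conjugating $J$ into a fixed special maximal parahoric, which is harmless since orbital integrals are conjugation-invariant), for a parahoric subgroup $J$ the orbital integral $O_\gamma(e_J)$ depends only on the isomorphism type of the reductive quotient $J/J_+$; the same reduction applies verbatim to a general connected reductive $G$. On the other hand, the conditions imposed on $J_\sigma$ in Definition~\ref{gendef} determine $J_\sigma/J_{\sigma,+}$ up to isomorphism: by Bruhat–Tits theory the facet $\sigma$ of $B(M_{ad})$ is a facet of $B(G_{ad})$, the group $M_\sigma\cap M^1$ is the parahoric of $M$ attached to it, and any parahoric of $G$ minimal among those meeting $M$ in $M_\sigma\cap M^1$ has the same reductive quotient. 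Hence $O_\gamma(e_{J_\sigma})$ is well defined and depends on $\sigma$ only through the $M$-conjugacy class of the parahoric $M_\sigma\cap M^1$.

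\emph{Constancy on $M$-orbits.} For $m\in M$ we have $\dim(m\sigma)=\dim\sigma$ and $M_{m\sigma}=mM_\sigma m^{-1}$; since $M^1\trianglelefteq M$ and $Z(M)$ is central this gives $M_{m\sigma}\cap M^1=m(M_\sigma\cap M^1)m^{-1}$ and $M_{m\sigma}/Z(M)=m\,(M_\sigma/Z(M))\,m^{-1}$ inside $M/Z(M)$, so invariance of Haar measure under conjugation yields $c_{m\sigma}=c_\sigma$. Moreover $M_{m\sigma}\cap M^1$ is $M$-conjugate to $M_\sigma\cap M^1$, so by the previous paragraph $O_\gamma(e_{J_{m\sigma}})=O_\gamma(e_{J_\sigma})$. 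Thus $c_\sigma\,O_\gamma(e_{J_\sigma})$ is an invariant of the $M$-orbit of $\sigma$, and summing it over a set of orbit representatives produces the same value for every admissible choice of $S_M$ and of the $J_\sigma$.

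The step I expect to cost the most is the claim, for a general reductive $G$, that a minimal parahoric $J_\sigma$ with $J_\sigma\cap M=M_\sigma\cap M^1$ exists and has a reductive quotient independent of the choice — a statement about the combinatorics of the embedding $B(M_{ad})\hookrightarrow B(G_{ad})$ and the associated smooth models — together with the (mild) generalization of Proposition~\ref{gen prop}, and of the remark that $O_\gamma(e_J)$ depends only on $J/J_+$, from $\GL_n(F)$ to arbitrary $G$. For the applications in this paper only the $\GL_n(F)$ case is needed, where both points are explicit and already in hand.
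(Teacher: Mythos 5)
Your proof is correct and follows essentially the same route as the paper's: the independence from $J_\sigma$ is reduced to the observation that the defining conditions pin down the reductive quotient $J_\sigma/J_{\sigma,+}$, which by (the generalization of) Proposition~\ref{gen prop} is all that the orbital integrals of $e_{J_\sigma}$ see, and the independence from $S_M$ comes down to conjugation-invariance of orbital integrals applied summand by summand. Your version is more explicit than the paper's (which is a two-sentence remark), and you rightly flag that the existence and uniqueness-up-to-reductive-quotient of a minimal $J_\sigma$ for general $G$ is the only point the paper leaves unargued.
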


\begin{proof}
Suppose that $J'_\sigma$ was a different choice. We only need to remark that the condition in the definition pins down the group  $J_\sigma/J_\sigma^+$, since it is this group that determines the orbital integrals of $1_{J_\sigma}$, as follows from (the natural generalization of) Proposition \ref{gen prop}. The independence from $S_M$ is clear since changing it is equivalent to conjugating various summands of the function.  
\end{proof}

Let $L=J/J^+$, so that to $w\in W_L$ we may associate a (conjugacy class of) maximal torus $T_w$ in $L$. If $T_w$ is a maximal torus in $L$ then there is an unramified maximal torus ${\bf T}_w$ defined over $\Oh$, with special fiber $T_w$, and embedding into $G$ and into $G^*$. Let $M^*_w$ be a Levi subgroup of $G^*$ containing ${\bf T}_w (F)$ as an elliptic subtorus. 

\begin{conj}\label{conj}
If $J\subset G$ is an arbitrary parahoric subgroup, then a matching function (in the sense of standard endoscopy) for $1_{J}$ is $\frac{1}{|W_L|}\sum\limits_{w\in W_L} f_{S_{M_w^*}  }  $
\end{conj}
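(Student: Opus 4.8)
The plan is to follow the pattern already established for $\GL_r(D)$ in the body of the paper: reduce the conjecture for a general parahoric to the conjecture for the Iwahori subgroup, and reduce the Iwahori case to a statement about Deligne--Lusztig functions, which is exactly the content of the generalized Kazhdan--Varshavsky theorem. First I would recall the decomposition $1_J = \frac{1}{|W_L|}\sum_{w\in W_L} R_{T_w}^L(1)$, where $L = J/J^+$ is the maximal reductive quotient, viewed as an inflated function on $J$ and extended by zero. This reduces the problem to producing, for each $w\in W_L$, a matching function for the lift of $R_{T_w}^L(1)$. The torus $\mathbf{T}_w$ embeds both in $G$ and in its quasisplit inner form $G^*$; choosing a Levi $M_w^*\subset G^*$ containing $\mathbf{T}_w(F)$ as an elliptic subtorus is exactly the standard-endoscopy mechanism by which elliptic tori in inner forms are transferred.

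The key step is then to prove that $f_{S_{M_w^*}}$ matches (in the sense of standard endoscopy, i.e.\ has the correct $\kappa$-orbital integrals) the lift of $R_{T_w}^L(1)$ to $G$. Here one wants to mimic Proposition~\ref{Comb prop} and Proposition~\ref{gen prop}: by Proposition~\ref{gen prop} (more precisely its evident generalization to arbitrary reductive $G^*$), the orbital integrals of $e_{J_\sigma}$ depend only on the reductive quotient $J_\sigma/J_\sigma^+$, and coincide with those of $\mathrm{Ind}_{J_\sigma/J_\sigma^+}^{G^*(\FF_q)}(1)$ lifted to $G^*$. So $f_{S_{M_w^*}}$ has the same orbital integrals as a linear combination of $\mathrm{Ind}_{M_{w,\sigma}^*\cap M_w^{*,1}}^{\cdots}(1)$, and the combinatorial identity of Proposition~\ref{Comb prop} (applied inside $M_w^*$, which is a product of general linear groups where one knows the elliptic conjugacy class structure) should identify this with a Deligne--Lusztig function $R_{T_w}^{M_w^*}(1)$ inflated and induced up to $G^*(\FF_q)$, hence with $R_{T_w}^{G^*(\FF_q)}(1)$ by the character formula $R_{T_w}^{G^*}(1) = \mathrm{Ind}_{P}^{G^*}(R_{T_w}^{M_w^*}(1))$ (Carter, Prop.~7.4.4). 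The matching of $R_{T_w}^{G^*(\FF_q)}(1)$ lifted to $G^*$ with $R_{T_w}^L(1)$ lifted to $G$ is then precisely the statement of Theorem~\ref{KVtheorem2} --- or rather its generalization to arbitrary $G$, which is the general Kazhdan--Varshavsky result of \cite{KV} and is where the hypothesis $p>n$ would be inherited in the $\GL$ case. Assembling the pieces and averaging over $w\in W_L$ recovers $F_J := \frac{1}{|W_L|}\sum_w f_{S_{M_w^*}}$.

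The main obstacle is the combinatorial identity in the general case. For $\GL$ it was Proposition~\ref{Comb prop}, whose proof leaned on \cite[Prop.~5.5.5]{laumon} and on the fact that $(1\,2\cdots d)$ is the only elliptic class in $S_d$ --- a fact with no counterpart for general Weyl groups, where elliptic conjugacy classes are plentiful and not cyclic. So for a general reductive $G$ one would need a replacement: the correct statement should be that the ``$\ell$-adic Euler--Poincar\'e function'' on $W_L$ built from the $\frac{(-1)^{\dim\sigma}}{\cdots}$-weighted sum over facets of $B(M_{\mathrm{ad}})$ has orbital integrals supported on elliptic classes of $W_L$, with the appropriate normalization, and that on those elliptic classes it computes (via the Green function / uniform projection) exactly the value $R_{T_w}^L(1)$. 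This is genuinely an Euler-characteristic computation on the Coxeter complex, and proving it uniformly is the crux; I expect this to be substantially harder than the $\GL$ case and is presumably why the statement is posed only as a conjecture. A secondary, more technical point is checking that the choice of $M_w^*$ (elliptic embedding of $\mathbf{T}_w$) is compatible with the building-theoretic data defining $J_\sigma$, so that the reductive quotients genuinely match; this should be a bookkeeping matter once the main identity is in place.
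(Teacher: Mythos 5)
The statement you are addressing is posed in the paper only as a conjecture: the paper proves the matching $1_J \leftrightarrow F_J$ for $G=\GL_r(D)$ (Theorem \ref{defofF_J}) but offers no proof for general reductive $G$, and your proposal does not close that gap --- it is a strategy sketch whose crux you yourself flag as unproven. To your credit, the sketch correctly reproduces the mechanism of the $\GL$ case: decompose $1_J=\frac{1}{|W_L|}\sum_w R_{T_w}^L(1)$, transfer each $R_{T_w}^L(1)$ to a Deligne--Lusztig function on $G^*$ via Kazhdan--Varshavsky, and convert that into an Iwahori-biinvariant combination of $e_{J_\sigma}$'s via Propositions \ref{Comb prop} and \ref{gen prop}. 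But two of your steps would genuinely fail, not merely require more work, outside type $A$.

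First, your term-by-term identification of $f_{S_{M_w^*}}$ with (the transfer of) $R_{T_w}^L(1)$ cannot hold in general. The function $f_{S_{M_w^*}}$ is determined by the conjugacy class of the Levi $M_w^*$ alone; its restriction to the reductive quotient is an Euler--Poincar\'e-type combination, which is a single \emph{uniform} class function. For a general $M_w^*$ there are several conjugacy classes of elliptic maximal tori, so distinct $w, w'\in W_L$ can yield the same $M_w^*=M_{w'}^*$ while $R_{T_w}(1)$ and $R_{T_{w'}}(1)$ have different orbital integrals; one function cannot match both. In $\GL_m$ this issue is invisible because each factor has a unique elliptic class (the Coxeter class), which is exactly the fact Proposition \ref{Comb prop} leans on (together with \cite[Prop.\ 5.5.5]{laumon}). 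So for general $G$ the argument must be run on the average $\frac{1}{|W_L|}\sum_w$ rather than summand by summand, and the needed combinatorial identity is then a statement relating the averaged relative EP functions to the uniform projection of $1_L$ --- a different and harder identity than the one you propose to generalize. Second, ``matching in the sense of standard endoscopy'' for general $G$ means matching of \emph{stable} orbital integrals with the Kottwitz signs $e(G_\gamma)$ and nontrivial transfer factors, whereas Definition \ref{matchingdef} and your argument work with plain orbital integrals; this coincidence is special to inner forms of $\GL_n$, where stable conjugacy equals conjugacy. The Kazhdan--Varshavsky results you invoke are themselves phrased in stable/endoscopic terms, so the bookkeeping of signs and of which stable classes of $G^*$ ``come from'' $G$ is a substantive missing ingredient, not a formality.
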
 

In the special case where $G=G^*$, if $J=I$ is the Iwahori, then the function in conjecture \ref{conj} simplifies to $\frac{1}{dg(I)}1_I$. This does match $1_I$ for our definition of matching.



\bibliographystyle{plain}
\bibliography{bibfile}

\end{document}